\documentclass[11pt]{amsart}

\usepackage[T1]{fontenc}
\usepackage[utf8]{inputenc}
\usepackage{amsmath,amssymb,amsthm,mathtools}
\usepackage{mathrsfs}
\usepackage[rgb,dvipsnames]{xcolor}
\usepackage{tikz}
\usetikzlibrary{backgrounds}
\usetikzlibrary{arrows,arrows.meta,petri,topaths,positioning,shapes,shapes.misc,patterns,calc,decorations,decorations.pathreplacing,hobby}
\usepackage{tikz-cd}
\usepackage{adjustbox}
\usepackage{caption}
\usepackage{floatrow}
\usepackage{float}
\usepackage[inline]{enumitem}
\usepackage{tasks}
\usepackage{booktabs}
\usepackage{array}
\usepackage{microtype}
\usepackage{relsize}
\usepackage[hidelinks]{hyperref}
\usepackage[backend=biber,style=alphabetic,maxbibnames=15,maxcitenames=6,dateabbrev=false,autolang=hyphen]{biblatex}

\renewcommand{\UrlFont}{\sffamily\smaller}
\renewbibmacro{in:}{\ifentrytype{article}{}{\printtext{\bibstring{in}\intitlepunct}}}
\DeclareFieldFormat{url}{{\UrlFont\url{#1}}}
\DeclareFieldFormat{urldate}{%
  (version \thefield{urlday}\addspace%
  \mkbibmonth{\thefield{urlmonth}}\addspace%
  \thefield{urlyear}\isdot)}
\DeclareFieldFormat{eprint:arxiv}{%
\ifhyperref
    {\href{http://arxiv.org/abs/#1}{%
    arXiv\addcolon\nolinkurl{#1}}\iffieldundef{eprintclass}{}{\printtext{ [\thefield{eprintclass}]}}}
    {arXiv\addcolon\nolinkurl{#1}\iffieldundef{eprintclass}{}{\printtext{ [\thefield{eprintclass}]}}}}
\DeclareFieldFormat{eprint:arXiv}{%
\ifhyperref
    {\href{http://arxiv.org/abs/#1}{%
    arXiv\addcolon\nolinkurl{#1}}\iffieldundef{eprintclass}{}{\printtext{ [\thefield{eprintclass}]}}}
    {arXiv\addcolon\nolinkurl{#1}\iffieldundef{eprintclass}{}{\printtext{ [\thefield{eprintclass}]}}}}
\defbibheading{mlobibliography}{\begin{center}\scshape References\end{center}}

\newtheorem{theorem}{Theorem}[section]
\newtheorem{lemma}[theorem]{Lemma}
\newtheorem{corollary}[theorem]{Corollary}
\newtheorem{proposition}[theorem]{Proposition}

\newtheorem*{maintheorem*}{Main Theorem}
\theoremstyle{definition}
\newtheorem{definition}[theorem]{Definition}
\newtheorem*{maindefinition*}{Main Definition}

\theoremstyle{remark}
\newtheorem{remark}[theorem]{Remark}
\theoremstyle{plain}

\newsavebox{\mlopossiblebox}
\newsavebox{\mlonecessarybox}
\sbox{\mlopossiblebox}{\tikz[scale=.6ex/1cm,baseline=-.6ex,rotate=45,line width=.1ex]{\draw (-1,-1) rectangle (1,1);}}
\sbox{\mlonecessarybox}{\tikz[scale=.6ex/1cm,baseline=-.6ex,line width=.1ex]{\draw (-1,-1) rectangle (1,1);}}
\DeclareMathOperator{\possible}{\text{\usebox{\mlopossiblebox}}}
\DeclareMathOperator{\necessary}{\text{\usebox{\mlonecessarybox}}}
\newcommand{\satisfies}{\models}
\DeclareRobustCommand{\axiomf}[1]{\ifmmode\text{\upshape #1}\else\textup{#1}\fi}
\DeclareRobustCommand{\theoryf}[1]{\ifmmode\mathrm{#1}\else\textup{#1}\fi}

\newcommand{\SFour}{\ensuremath{\theoryf{S4}}}
\newcommand{\SFourTwo}{\ensuremath{\theoryf{S4.2}}}
\newcommand{\SFourTwoOne}{\ensuremath{\theoryf{S4.2.1}}}
\newcommand{\SFourThree}{\ensuremath{\theoryf{S4.3}}}
\newcommand{\SFive}{\ensuremath{\theoryf{S5}}}

\renewcommand{\emptyset}{\varnothing}

\newcommand{\Q}{\mathbb Q}
\newcommand{\Z}{\mathbb Z}
\newcommand{\Val}{\mathrm{Val}}
\newcommand{\Valsent}{\mathrm{Val}^{\mathrm{sent}}}
\newcommand{\Cone}{\operatorname{Cone}}
\newcommand{\diag}{\Delta_0}
\newcommand{\Th}{\operatorname{Th}}
\newcommand{\LOemb}{\mathrm{LO}^{\mathrm{emb}}}
\newcommand{\LOmon}{\mathrm{LO}^{\mathrm{mon}}}
\newcommand{\LOcond}{\mathrm{LO}^{\mathrm{cond}}}
\newcommand{\LOend}{\mathrm{LO}^{\mathrm{end}}}
\newcommand{\Lord}{\mathcal L_{\le}}
\newcommand{\Lmord}{\mathcal L^{\possible}_{\le}}
\newcommand{\LredA}[1]{\mathcal L_{\le}(#1)}
\DeclareRobustCommand{\Sfourthreecap}{\ifmmode\SFourThree_{\axiomf{cap}}\else\SFourThree\textsubscript{\axiomf{cap}}\fi}
\newcommand{\mainref}[1]{\textsuperscript{\normalfont\tiny\color{SalmonSunrise}\ref{#1}}}
\newcommand{\powerset}{\mathscr P}
\newcommand{\restr}{\upharpoonright}
\newcommand{\emb}{\hookrightarrow}
\newcommand{\onto}{\twoheadrightarrow}
\newcommand{\Iff}{\mathrel{\Longleftrightarrow}}

\definecolor{SalmonSunrise}{HTML}{F29783}
\definecolor{MintBreeze}{HTML}{83F2CA}
\definecolor{LemonChiffon}{HTML}{F2EDB3}
\definecolor{SkyBlueDream}{HTML}{A9CDF6}
\definecolor{LavenderHaze}{HTML}{A88FF2}

\tikzset{>=Stealth,
  dot/.style={circle,draw,fill,inner sep=#1},
    dot/.default=.7pt
}
\colorlet{opred}{Salmon!85}
\colorlet{opgreen}{Aquamarine!85}
\colorlet{opyellow}{Dandelion!45}
\colorlet{opblue}{CornflowerBlue!60}
\colorlet{oppurple}{Plum!75}

\title{The Modal Theory of Linear Orders}
\author{Wojciech Aleksander Wo\l oszyn}
\address[Wojciech Aleksander Wo\l oszyn]
{Mathematical Institute, University of Oxford, Andrew Wiles Building, Radcliffe Observatory Quarter, Woodstock Road, Oxford, OX2 6GG, United Kingdom \&\ St Hilda's College, Cowley Place, Oxford, OX4 1DY, United Kingdom}
\email{wojciech@woloszyn.org}
\urladdr{https://woloszyn.org}

\begin{document}

\begin{abstract}
I study the modal theory of linear orders under embeddings, monotone maps, condensations, and end-extensions. I prove modality elimination for embeddings and monotone maps, show that condensations make scatteredness modally definable, and compute exact propositional modal validities in the main cases.
\end{abstract}

\maketitle

Modal linear order theory investigates linear orders from a modal perspective. The worlds are always linear orders, but the allowed maps determine what it means for one order to become another: embeddings insert points; monotone maps collapse intervals; condensations produce ordered quotients; and end-extensions append final tails. In each case we ask what becomes expressible in the resulting modal language and exactly which propositional modal principles are valid, both with and without parameters.

\begin{maintheorem*}\makeatletter\def\@currentlabelname{main theorem}\makeatother\label{Maintheorem.LO}
\leavevmode
\begin{enumerate}[label=\textup{(\arabic*)},leftmargin=*,itemsep=0.05em,topsep=0pt,parsep=0pt]

\item In the category of linear orders and order-embeddings, the embedding modality admits modality elimination\mainref{thm:elimination-emb}. Every occurrence of $\possible\theta$ can be replaced by a finite disjunction of order-patterns together with upper bounds on the successive intervals they determine. Consequently, finite worlds have exact sentential modal logic \Sfourthreecap{}\mainref{thm:finite-emb-sentential}, while infinite worlds have exact sentential modal logic \SFive\mainref{thm:infinite-emb-sentential}. In the presence of parameters, a world has exact modal logic \SFourTwo\mainref{thm:emb-S42-iff-adjacent} precisely when it contains infinitely many adjacent pairs, and exact modal logic \SFive\mainref{thm:emb-S5-iff-DLO} precisely when it is a nonempty dense linear order without endpoints.

\item In the category of linear orders and monotone maps, the monotone-map modality likewise admits modality elimination\mainref{thm:elimination-mon}. For $\possible\theta$, satisfiable target order-patterns contribute the disjunction of all source patterns that collapse to them, with one extra empty-order alternative in the sentence case. Consequently, every nonempty world has exact sentential modal logic \SFive\mainref{thm:nonempty-mon-sentential}. With parameters allowed, singleton worlds have exact modal logic \SFive\mainref{thm:singleton-mon-formulaic}, while nonempty infinite worlds have exact modal logic \SFourTwo\mainref{thm:mon-S42-iff-infinite}.

\item In the category of nonempty linear orders and condensations, modality elimination fails. Indeed, scatteredness is itself modally definable\mainref{thm:scattered-modally-definable}. Every world validates \SFourTwoOne\mainref{thm:S421-cond-all} with parameters, and this logic is exact for every non-scattered world\mainref{thm:nonscattered-condensation-exact-S421}.

\item In the category of linear orders and end-extensions, every finite world validates exactly \SFour\mainref{thm:end-finite-S4}, even with parameters allowed, and every world admits an end-extension validating \SFive\mainref{thm:end-extends-to-S5} with parameters.

\end{enumerate}
\end{maintheorem*}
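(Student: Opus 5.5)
This is a summary theorem whose clauses are proved in the later sections, so my plan is to establish each clause through its own chain of lemmas and then assemble them.

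For (1), I first prove the modality elimination for embeddings by induction on formulas. The key case is $\possible\theta$ with $\theta$ already modality-free. In the theory of linear orders, a quantifier-free description of finitely many parameters is an order-pattern. An embedding $M\emb N$ can insert arbitrarily many points into every interval and beyond the endpoints, but it cannot remove points. So the statement ``some extension satisfies $\theta(\bar a)$'' should reduce to the following: some Ehrenfeucht--Fraïssé type realized in an extension is compatible with the pattern of $\bar a$, and the successive intervals of $M$ are no larger than what that type requires. The upper bounds arise because a finite interval can only grow. I would use the Ehrenfeucht--Fraïssé game of rank the quantifier depth of $\theta$, truncating interval sizes at $2^{\mathrm{qd}}$, to show that this finite disjunction works.

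The propositional validities then follow from the standard control-statement method. On finite worlds, the cardinality thresholds ``$|M|\ge n$'' form a long ratchet, and together with the cap given by the eventual ``infinite'' stage this yields \Sfourthreecap{}. On infinite worlds, the reduced sentences are invariant up to rank-$n$ equivalence, and every infinite order embeds into one equivalent to any chosen target. This gives symmetry and hence \SFive. With parameters, I use independent buttons and switches. Infinitely many adjacent pairs give infinitely many independent buttons, namely ``insert a point between $a_i$ and $a_i'$'', which yields \SFourTwo. For the converse, I argue that with only finitely many adjacent pairs, every assertion is eventually a switch, or the logic collapses. For the \SFive{} characterization, I show that DLO worlds make every reduced assertion about parameters invariant, since no interval bound can change. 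Conversely, an endpoint, an adjacent pair, or emptiness supplies a button.

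For (2), I follow the same scheme with monotone maps, which collapse convex sets. The elimination is: $\possible\theta(\bar a)$ holds if and only if some satisfiable target pattern of images is realized, which means some pattern of $\bar a$ collapses onto it. The sentence case has the extra empty-order alternative. Because every nonempty order maps onto a point and then re-expands to an arbitrary order, the sentential logic is \SFive. With parameters, an infinite world supplies infinitely many independent buttons of the form ``$a_i$ and $a_{i+1}$ become identified'', which are irreversible; this gives \SFourTwo. Singletons have no nontrivial parameter patterns, so they give \SFive.

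For (3) and (4), I proceed as follows.
\begin{enumerate}
\item For condensations, I prove the modal definability of scatteredness: an order is non-scattered if and only if it condenses onto $\Q$-like orders in a way expressible by a possibility statement, for instance ``possibly dense''. Hausdorff's theorem makes this fail for scattered orders.
\item \SFourTwoOne{} validity follows from the existence of a terminal condensation to a point: the collapse to a singleton gives the .1 (McKinsey) axiom and directedness.
\item Exactness for non-scattered worlds comes from embedding a copy of $\Q$ and building independent buttons together with the required dead-end structure.
\item For end-extensions, on finite worlds I use lengths and parameter positions as a ratchet and build the required labelled trees from finite tails, which gives exact \SFour.
\item For the second part of (4), I extend any world by a suitably saturated tail such as $\Q$ or a large homogeneous order, so that further end-extensions preserve everything and \SFive{} holds.
\end{enumerate}

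I expect the main obstacle to be the exactness lower bound in (3): realizing all finite \SFourTwoOne{}-frames via condensations of a non-scattered order with parameters.
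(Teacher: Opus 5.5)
Two of your mechanisms would fail as stated.

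\textbf{The converse in (1).} The converse half of the \SFourTwo{} characterization cannot rest on every assertion being ``eventually a switch'', because that holds at \emph{every} world of $\LOemb$. Given $\varphi(\bar a)$, insert infinitely many points into each interval cut out by $\bar a$. By elimination, the upper bounds governing $\possible\varphi(\bar a)$ and $\possible\neg\varphi(\bar a)$ are then vacuous throughout the cone, so $\varphi(\bar a)$ is a switch or constant there. This already happens at $\Z$, whose exact parameter logic is \SFourTwo, so it separates nothing.

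The missing idea is a bound on independent buttons. By elimination and Lo\'s--Tarski over $\Th(\mathrm{LO})\cup\diag(W)$, each pure parameter button is existential. It is therefore an upward-closed threshold condition on the size vector of the finitely many finite insertion gaps $C$ (adjacent pairs and existing end-gaps). A combinatorial lemma on upward-closed subsets of a finite product of chains bounds independent families by $|C|$. The sharpness clause of the upper-bounds theorem then gives a validity outside \SFourTwo.

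For the forward direction, here and in (2), you also need dials independent of the buttons. The paper gets these by relativizing dials to the side of a point $c$ opposite the button parameters.

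\textbf{The \SFive{} end-extension in (4).} Appending a saturated tail does not give \SFive. At $W+\Q$, with $a$ in the $\Q$-tail, ``some adjacent pair lies above $a$'' is false but possibly necessary: append two points, and note that end-extensions never separate an adjacent pair. So \axiomf{5} fails. Moreover, no world has further end-extensions ``preserving everything'', since having a maximum is a switch in $\LOend$.

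What is needed is the fusion argument. Chains in $\LOend$ have covering cocones (direct limits). So you iterate $\omega$ times, each time end-extending (itself via a transfinite chain with cocones) until every possibly-necessary assertion about the current parameters is necessary, and then take the limit.

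\textbf{Exactness in (3).} Here the proposal names the goal but not the construction. The standard route to exactness of \SFourTwoOne{} labels topped pre-Boolean algebras. This requires a pure top button $\sigma$ such that \emph{every} world below $\sigma$ can still reset arbitrarily long dials and push any further buttons without pushing $\sigma$.

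First-order tops such as ``singleton'' or ``$a=b$'' fail. Two-point worlds sit below them, and from a two-point world no dial value can be changed.

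The paper takes $\sigma=\mathrm{Scatt}_{<}(c)$, exploiting the modal definability of scatteredness. Below $\sigma$ the segment under $c$ is non-scattered, so it condenses through a dense hub onto any prescribed finite adjacency chain. The buttons $\sigma\vee(a_i=b_i)$ live above $c$, so they do not interfere.

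\textbf{Finite worlds in (4).} For finite worlds of $\LOend$, length is a ratchet and yields only chains. The parameters lie in the fixed initial segment, so their positions never move. Labeling pretrees needs persistent, mutually exclusive branch codes.

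The paper builds these by coding the choice just below the least point ending a successor chain of length $n$; once that point exists, end-extensions cannot undercut it. Dense buffers isolate the blocks, and the dials are chosen to create no long successor chains.

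\textbf{Smaller omissions.}
\begin{itemize}
\item At finite worlds in (1), the validity of \axiomf{.3} and $\chi_2$ needs the fact that every upward-absolute sentence is a size threshold $\theta_n$.
\item In the same case, the cap is an $m$-cluster of dial values above a size threshold, not an ``infinite stage''.
\item In (3), ``possibly dense'' must require a nontrivial dense order, since every order condenses onto a vacuously dense singleton.
\item Failure of elimination in (3) also needs that scatteredness is not first-order, e.g.\ $\Z\equiv\Q\times\Z$.
\end{itemize}
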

\newpage

The theorem shows that the same class of structures has markedly different modal behaviour depending on which maps are allowed. Under embeddings, finite size constraints act like buttons. Namely, once sufficiently many points have been inserted into the relevant intervals, those constraints are necessary; an infinite order has already realized all such finite possibilities at the sentential level. Under monotone maps, the sentential structure collapses even further. Every nonempty order maps constantly onto any singleton and, more generally, the internal order-pattern is invisible to sentential modal assertions, yielding exact \SFive. With parameters, however, infinite worlds recover enough room for the exact logic \SFourTwo. Condensations behave differently. Modality elimination fails, and the modal language can define scatteredness, a classical non-first-order property of linear orders. In the non-scattered case this expressivity is calibrated exactly by \SFourTwoOne. Finally, end-extensions are more rigid, since all new points must be added at the top; nevertheless finite worlds still validate precisely \SFour, while every world can be extended to one validating \SFive. For the exactness assertions above, exactness means that every modal principle outside the stated logic fails under some substitution of the indicated kind.

In this article, the order language is
$\Lord=\{\le\}$, with $x<y$ abbreviating $x\le y\wedge x\neq y$.  Linear orders
may be empty unless explicitly excluded; in the condensation category the worlds
are nonempty linear orders.

\vspace{0.8\baselineskip}

Meanwhile, the modal theory of linear orders forms part of the broader project
of modal model theory \cite{HW}, the study of structures inside classes of
similar structures equipped with a possibly refined substructure relation giving
rise to notions of possibility and necessity. Its antecedents include
Hamkins's Simple Maximality Principle \cite{HamkinsSimpleMaximality}, the modal
logic of forcing of Hamkins and L{\"o}we \cite{HamkinsLoweForcing}, and the
structural analysis of forcing classes by Hamkins, Leibman, and L{\"o}we
\cite{HamkinsLeibmanLowe}, where the button, switch, and ratchet methods were
systematically connected with modal validities. The subject grew further through
set-theoretic potentialism and the potentialist maximality principles
\cite{HamkinsLinnebo}, as well as arithmetic potentialism \cite{HamkinsArithmetic},
and has since expanded to a variety of structural settings, including the modal
logic of inner models \cite{InamdarLowe}, the modal logic of abelian groups
\cite{BergerBlockLowe}, and modal group theory \cite{WolGroups}. The modal
theory of the category of sets introduced the Kripke-category framework used
throughout this paper \cite{WolSets}, allowing one to study arbitrary concrete
categories of first-order structures rather than only traditional extension-based
potentialist systems. This broader perspective is essential for linear orders:
embeddings, monotone maps, condensations, and end-extensions all act on the same
underlying class of structures, yet they induce strikingly different modal
behaviours and different propositional validities.

\section*{Acknowledgements}
I should like to thank Joel David Hamkins and Ehud Hrushovski for helpful discussions.

\section{Modal semantics for a concrete category}\label{sec:semantics}

Let me first recall the formal semantics for the Kripke-category framework. A
Kripke category packages the two ingredients needed for the modal interpretation:
the structures themselves and the maps by which one is allowed to move from one
structure to another. Possibility and necessity are then interpreted inside the
cone of worlds reachable from the present one. For the standard finite-frame
facts used below we refer to \cite{BRV,CZ}.

\begin{maindefinition*}\makeatletter\def\@currentlabelname{main definition}\makeatother\label{Definition.Kripke-category}
A \emph{Kripke category} is a concrete category of $\mathcal L$-structures in a common
first-order language $\mathcal L$ such that every morphism is an $\mathcal L$-homomorphism.
Its objects are called \emph{worlds}, and its morphisms are called
\emph{accessibility mappings}.
\end{maindefinition*}

\begin{definition}[Cones and modal satisfaction]\label{def:modal-semantics}
Let $W$ be a world in a Kripke category $\mathcal K$. The \emph{cone above $W$},
denoted $\Cone(W)$, is the full subcategory of $\mathcal K$ whose objects are
the worlds accessible from $W$.

If $\mathcal L^{\possible}$ is the modal expansion of $\mathcal L$, the
satisfaction relation for formulas of $\mathcal L^{\possible}$ is defined
simultaneously at all worlds in $\Cone(W)$ by the usual first-order clauses
together with
\begin{align*}
U\satisfies \possible\varphi[\nu]
&\quad\text{if and only if there is }f:U\to V\text{ in }\Cone(W)\\
&\quad\text{such that }V\satisfies \varphi[f\circ \nu],\\
U\satisfies \necessary\varphi[\nu]
&\quad\text{if and only if for every }f:U\to V\text{ in }\Cone(W)\\
&\quad\text{we have }V\satisfies \varphi[f\circ \nu].
\end{align*}
\end{definition}

\begin{definition}[Parameter languages]\label{def:parameter-languages}
For a parameter set $A$ in a linear order $W$, let $\Lmord(A)$ denote the
modal expansion of $\Lord$ by constant symbols naming the elements of $A$, and
let $\LredA{A}$ denote its non-modal reduct. Thus $\Lmord=\Lmord(\varnothing)$.
\end{definition}

We shall use the following renaming lemma from \cite[Renaming lemma]{WolSets}.

\begin{proposition}[Renaming lemma]\label{prop:renaming-lemma}
If $\pi:W\cong U$ is an isomorphism in a Kripke category, then for every modal
formula $\varphi$ and every valuation $\nu$,
\[
W\satisfies \varphi[\nu]
\quad\text{if and only if}\quad
U\satisfies \varphi[\pi\circ\nu].
\]
\end{proposition}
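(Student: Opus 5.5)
The plan is to prove the Renaming lemma by induction on the complexity of the modal formula $\varphi$, simultaneously for all valuations $\nu$. We use that the satisfaction relation is defined at every world of $\Cone(W)$ by the same clauses, and that the isomorphism $\pi$ and its inverse are morphisms of the Kripke category. We should also check that $\Cone(W)=\Cone(U)$. Since $\pi$ and $\pi^{-1}$ are morphisms, any world accessible from $U$ is accessible from $W$ by composing with $\pi$, and conversely via $\pi^{-1}$. So the ambient cone used to evaluate modal formulas is the same at both worlds.

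For the atomic case, $\pi$ is an isomorphism of $\mathcal L$-structures, so it preserves and reflects every atomic formula (here $x\le y$ and $x=y$, together with constants when parameters are present). Thus $W\satisfies\alpha[\nu]$ if and only if $U\satisfies\alpha[\pi\circ\nu]$. The Boolean cases are immediate from the induction hypothesis.

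For the quantifier case $\exists x\,\psi$, suppose $W\satisfies\psi[\nu(x\mapsto a)]$. By induction, $U\satisfies\psi[(\pi\circ\nu)(x\mapsto\pi(a))]$. For the converse we use the surjectivity of $\pi$: a witness $b\in U$ is $\pi(\pi^{-1}(b))$, and we apply the induction hypothesis to the valuation $\nu(x\mapsto \pi^{-1}(b))$.

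The modal case $\possible\psi$ is the only step needing care, though it is no real obstacle.

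Forward direction: suppose $W\satisfies\possible\psi[\nu]$, witnessed by $f:W\to V$ with $V\satisfies\psi[f\circ\nu]$. Then $f\circ\pi^{-1}:U\to V$ is a morphism in the cone, and
\[
(f\circ\pi^{-1})\circ(\pi\circ\nu)=f\circ\nu,
\]
so $U\satisfies\possible\psi[\pi\circ\nu]$. No induction hypothesis is even needed here, since $\psi$ is evaluated at the same world $V$ under the same valuation.

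Converse: if $g:U\to V$ witnesses $U\satisfies\possible\psi[\pi\circ\nu]$, then $g\circ\pi:W\to V$ witnesses $W\satisfies\possible\psi[\nu]$.

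The $\necessary$ case then follows by duality, or by the symmetric argument with universal quantification over morphisms. This completes the induction.
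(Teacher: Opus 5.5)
Your proposal is correct. The paper gives no proof of this lemma and simply imports it from \cite{WolSets}; your induction is the standard argument, and the only substantive points are $\Cone(W)=\Cone(U)$ and, in the modal clause, transferring witnesses by composing with $\pi^{-1}$ or $\pi$, so that $\psi$ is evaluated at the same $V$ under literally the same valuation.
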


\section{Propositional modal theory}\label{sec:propositional-modal-theory}

I will use propositional modal theories as sets of modal formulas, rather than
as proof systems. The distinction matters in modal model theory, because
validities with parameters need not be closed under necessitation in the usual
proof-theoretic sense. The substitution language will always be specified.

\begin{definition}[The modal theories used in this paper]\label{def:modal-theories}
We use the normal propositional modal theories $\SFour$, $\SFourTwo$,
$\SFourThree$, $\SFive$, and $\SFourTwoOne$. Here
\[
\SFourTwo=\SFour+\axiomf{.2},\qquad
\SFourThree=\SFour+\axiomf{.3},\qquad
\SFive=\SFour+\axiomf{5},
\]
and
\[
\SFourTwoOne:=\SFourTwo+\axiomf{.1}.
\]
The McKinsey axiom $\axiomf{.1}$ is
\[
\necessary\possible p\to \possible\necessary p,
\]
and we use $\axiomf{5}$ in the maximality-principle form
\[
\possible\necessary p\to p.
\]
Semantically, $\SFour$ reflects reflexivity and transitivity of the
accessibility relation; $\axiomf{.2}$ records directedness; $\axiomf{.3}$
records the absence of branching in generated cones; $\axiomf{.1}$ is the
McKinsey principle that a necessary possibility is possibly necessary; and
$\axiomf{5}$ says that any possibly necessary assertion is already true.
\end{definition}

\begin{definition}[Propositional validities]\label{def:propositional-validities}
Let $\mathcal K$ be a Kripke category of $\mathcal L$-structures, let $W$ be a
world of $\mathcal K$, and let $\Gamma$ be a collection of
$\mathcal L^{\possible}$-assertions. A propositional modal formula
$\psi(p_0,\dots,p_n)$ is \emph{valid at $W$ with respect to $\Gamma$} if every
substitution instance $\psi(\varphi_0,\dots,\varphi_n)$, with each
$\varphi_i\in\Gamma$, holds at $W$.

We write $\Val_{\mathcal K}(W,\Gamma)$ for the set of all such valid
propositional modal formulas. When only sentence members of $\Gamma$ are
allowed as substitutions, we write $\Valsent_{\mathcal K}(W,\Gamma)$ and call
these the \emph{sentential} validities at $W$. When arbitrary assertions from a
parameter language such as $\Lmord(A)$ are allowed, $\Val_{\mathcal K}(W,\Gamma)$
gives the corresponding \emph{formulaic} validities.
\end{definition}

Thus, an assertion such as $\Val_{\mathcal K}(W,\Gamma)=\SFourTwo$ says two
things at once: all principles of $\SFourTwo$ hold throughout the relevant cone,
and every propositional principle outside $\SFourTwo$ is refutable at $W$ by
substitutions from $\Gamma$.

We shall also use the cone lemma repeatedly \cite[Cone Lemma]{WolSets}.

\begin{proposition}[Cone lemma]\label{prop:cone-lemma}
Let $T$ be a propositional modal theory closed under modal propositional
substitution. The normal closure of $T$ is valid at a world $W$ with respect to a
substitution collection $\Gamma$ if and only if every formula of $T$ is true at
every world in $\Cone(W)$ under every substitution from $\Gamma$.
\end{proposition}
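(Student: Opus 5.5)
We prove the two directions separately. Throughout, "true at $U$" for a world $U$ in $\Cone(W)$ means true at $U$ under the valuation $f\circ\nu$ carried forward along an accessibility map $f:W\to U$, where $\nu$ is a valuation into $W$. Satisfaction is always computed inside $\Cone(W)$ as in Definition~\ref{def:modal-semantics}. Two facts about cones are used repeatedly.
\begin{itemize}
\item If $U\in\Cone(W)$, then every world accessible from $U$ is again in $\Cone(W)$, by composition of morphisms.
\item Modal clauses evaluated at $U$ inside $\Cone(W)$ therefore range exactly over the maps out of $U$.
\end{itemize}

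For the forward direction, suppose the normal closure of $T$ is valid at $W$ with respect to $\Gamma$. Let $t\in T$. Then $\necessary t$ lies in the normal closure by necessitation, so every instance $\necessary t(\varphi_0,\dots,\varphi_n)$ with $\varphi_i\in\Gamma$ holds at $W$. By the semantic clause for $\necessary$, the instance $t(\vec\varphi)$ holds at every $V$ accessible from $W$, under the pushed-forward valuation. These $V$ are exactly the worlds of $\Cone(W)$, identities included, since the cone consists of the worlds accessible from $W$.

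For the converse, I will first replace the normal closure by an equivalent description that avoids the uniform substitution rule. Since $T$ is closed under substitution, and the $\axiomf{K}$ axioms and the propositional tautologies are given as schemes closed under substitution, a standard induction on derivations yields the following. The normal closure of $T$ is the least set that contains all substitution instances of $T$, $\axiomf{K}$ and the tautologies, and that is closed under modus ponens and necessitation. The point of the induction is that substitution commutes with both rules, so it can be pushed back to the axioms. This reformulation is the step I expect to need the most care. The obstacle is that a Γ-instance of a substituted formula $\psi(\theta(\vec p))$ has the form $\psi(\theta(\vec\varphi))$, and $\theta(\vec\varphi)$ need not belong to $\Gamma$. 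So one cannot simply close the set of Γ-valid formulas under substitution, and the hypothesis that $T$ is closed under substitution is exactly what removes the difficulty.

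Let $S$ be the set of propositional formulas $\psi$ such that for every $U\in\Cone(W)$ (with pushed-forward valuation) and every $\vec\varphi\in\Gamma$, the instance $\psi(\vec\varphi)$ holds at $U$. I will show that $S$ contains the normal closure.
\begin{itemize}
\item The instances of $T$ are in $S$ by hypothesis, since they are again in $T$.
\item Tautology instances are in $S$ because the first-order clauses are classical.
\item The $\axiomf{K}$ instances are in $S$ because the $\necessary$ clause quantifies over the maps $f:U\to V$ and distributes over implication.
\item $S$ is closed under modus ponens pointwise at each $U$.
\item For necessitation, suppose $\psi\in S$ and $f:U\to V$ lies in $\Cone(W)$. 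Then $V\in\Cone(W)$, and the valuation $f\circ(g\circ\nu)$ is again a pushed-forward valuation from $W$. Hence $\psi(\vec\varphi)$ holds at $V$, and so $\necessary\psi(\vec\varphi)$ holds at $U$.
\end{itemize}
Taking $U=W$ with the identity map then gives validity at $W$.
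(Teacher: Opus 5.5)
Your proof is correct. The paper does not prove this statement; it imports it from \cite[Cone Lemma]{WolSets}, so there is no in-paper argument to compare against, and what you give is a complete self-contained proof.

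The forward direction is the standard one: apply validity to $\necessary t$ and unwind the $\necessary$-clause along the arrows out of $W$.

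The converse is where your treatment earns its keep. A common shortcut is to show that the set of formulas all of whose $\Gamma$-instances hold throughout $\Cone(W)$ is itself a normal logic containing $T$. But that set is closed under uniform substitution only when $\Gamma$ is closed under the Boolean and modal connectives. The paper does apply the lemma to collections that are not so closed, e.g.\ sentences of $\Lord$ in Theorem~\ref{thm:finite-emb-sentential}.

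You first eliminate the substitution rule, which is legitimate because $T$, the $\axiomf{K}$ scheme and the tautologies are all substitution-closed. You can then push one fixed $\Gamma$-substitution uniformly through a derivation. So your argument covers arbitrary $\Gamma$, and it uses the substitution-closure hypothesis on $T$ exactly where that hypothesis is needed. The shortcut, by contrast, does not need that hypothesis but only works for closed $\Gamma$.

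The only tacit assumption is in your modus ponens step. A variable of the cut formula that does not occur in the conclusion must still be given a value, so you need $\Gamma\neq\varnothing$. Alternatively, replace such variables by $\top$ throughout the derivation, which keeps you inside the substitution-closed axioms. The condition $\Gamma\neq\varnothing$ holds in every application in the paper.
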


\section{Propositional modal validities in a Kripke category}\label{sec:modal-validities-Kripke-category}

The general theory of Kripke categories supplies the lower and upper-bound tools
used below. I will use only a small part of that machinery here: amalgamation for
the lower bound $\SFourTwo$, and the standard control-statement and labeling
methods for upper bounds.

\begin{definition}[Amalgamability over parameters]\label{def:A-amalgamability}
Let $A\subseteq W$ be a set of parameters in a world $W$. A span
\[
U_0\xleftarrow{f_0}W\xrightarrow{f_1}U_1
\]
is \emph{$A$-amalgamable} if there are arrows $g_0:U_0\to U$ and
$g_1:U_1\to U$ such that
\[
(g_0\circ f_0)(a)=(g_1\circ f_1)(a)
\]
for all $a\in A$.

The cone above $W$ is \emph{$A$-amalgamable} if, for every arrow $h:W\to V$,
every span rooted at $V$ is $h[A]$-amalgamable.
\end{definition}

The following is the only lower-bound theorem used in the paper; it is a special
case of the standard lower-bound theorem for Kripke categories
\cite[Section~4]{WolSets}.

\begin{theorem}[$\SFourTwo$ lower bound]\label{thm:S42-lower-bound}
Let $W$ be a world in a Kripke category $\mathcal K$, and let $A\subseteq W$.
If the cone above $W$ is $A$-amalgamable, then
\[
\SFourTwo\subseteq
\Val_{\mathcal K}(W,\mathcal L^{\possible}(A)).
\]
\end{theorem}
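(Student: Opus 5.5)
By the Cone lemma (Proposition~\ref{prop:cone-lemma}), applied to the theory consisting of the axioms \axiomf{K}, \axiomf{T}, \axiomf{4} and \axiomf{.2}, it suffices to show the following. Every world $V\in\Cone(W)$ satisfies every substitution instance of these axioms by assertions of $\mathcal L^{\possible}(A)$. Here the constants naming $a\in A$ are interpreted at $V$ via the arrow $h:W\to V$, i.e.\ as $h(a)$. This is the same convention as composing the valuation with arrows in Definition~\ref{def:modal-semantics}.

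The axioms \axiomf{K}, \axiomf{T} and \axiomf{4} are routine and I will dispatch them first.
\begin{itemize}
\item \axiomf{K} holds because necessity quantifies universally over the arrows out of $V$ in the cone, with the same transported valuation on both sides.
\item \axiomf{T} uses the identity arrow $\mathrm{id}_V$, which lies in $\Cone(W)$ since the cone is a full subcategory containing $V$.
\item \axiomf{4} uses composition. If $f:V\to U$ and $g:U\to U'$ are in the cone, then $g\circ f$ is an arrow of the cone, and the parameters are transported along $g\circ f$ exactly as they are along $f$ followed by $g$.
\end{itemize}

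The substantive step is \axiomf{.2}, in the form $\possible\necessary\varphi\to\necessary\possible\varphi$. Suppose $V\satisfies\possible\necessary\varphi$, witnessed by $f_0:V\to U_0$ with $U_0\satisfies\necessary\varphi$ under the parameters $f_0h[A]$. Let $f_1:V\to U_1$ be arbitrary; we must show $U_1\satisfies\possible\varphi$. The span $U_0\xleftarrow{f_0}V\xrightarrow{f_1}U_1$ is rooted at $V$, so by the hypothesis that the cone above $W$ is $A$-amalgamable it is $h[A]$-amalgamable. This yields $g_0:U_0\to U$ and $g_1:U_1\to U$ with $g_0f_0h(a)=g_1f_1h(a)$ for all $a\in A$. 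Since $U_0\satisfies\necessary\varphi$, applying it to $g_0$ gives $U\satisfies\varphi$ with parameters interpreted by $g_0f_0h$. These agree with $g_1f_1h$, and $\varphi$ only mentions the parameters from $A$ (plus free variables handled by the same transport). So $g_1$ witnesses $U_1\satisfies\possible\varphi$.

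The point needing care, and the main obstacle, is this last transfer. One must check that truth of an $\mathcal L^{\possible}(A)$-assertion at a world depends only on the interpretation of the named constants, and not on the arrow path by which they were reached. I would verify this by a straightforward induction on formulas, with modal clauses referring only to the current assignment of parameters. With that check done, \axiomf{.2} holds throughout the cone, and the Cone lemma gives $\SFourTwo\subseteq\Val_{\mathcal K}(W,\mathcal L^{\possible}(A))$.
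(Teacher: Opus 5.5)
Your proof is correct and is essentially the standard argument the paper defers to by citing \cite[Section~4]{WolSets}. \SFour{} comes from identities and composition in the cone. For \axiomf{.2}, you amalgamate the witness $f_0$ for $\possible\necessary\varphi$ with an arbitrary $f_1$ over $h[A]$, and the cone lemma then gives $\SFourTwo$ at $W$. The amalgam lies in $\Cone(W)$ via $g_0f_0h$, and the ``obstacle'' you flag is already immediate from Definition~\ref{def:modal-semantics}, since satisfaction is relative to a valuation and the modal clauses only see $f\circ\nu$.
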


We also use a small amount of standard propositional frame theory for finite
reflexive transitive frames.

\begin{definition}[Finite frames and frame maps]\label{def:finite-frames}
A \emph{(propositional) Kripke frame} is a preorder $F=(X,\le_F)$. A
\emph{pointed frame} is a frame together with a designated root $w_0\in X$. A
\emph{Kripke model} on $F$ is a pair $(F,V)$, where $V$ assigns to each
propositional variable a subset of $X$.

For $A\subseteq X$, the \emph{generated subframe} $F_A$ is the restriction of
$F$ to the set
\[
X_A:=\{x\in X:\text{for some }a\in A,\ a\le_F x\}.
\]
In the pointed case we write $F_{w_0}:=F_{\{w_0\}}$.

A map $\pi:F\to G$ between preorders is a \emph{bounded morphism} if:
\begin{enumerate}[label=\textup{(\roman*)}]
    \item $u\le_F v$ implies $\pi(u)\le_G \pi(v)$;
    \item whenever $\pi(u)\le_G y$, there is $v\in F$ with $u\le_F v$ and
    $\pi(v)=y$.
\end{enumerate}

For a finite preorder $F$, write $x\sim_F y$ when $x\le_F y$ and $y\le_F x$.
The quotient $F/{\sim_F}$ carries the induced partial order, and the
$\sim_F$-classes are called \emph{clusters}. A finite frame is \emph{complete}
if it has a single cluster. A finite frame is a \emph{pre-Boolean algebra} if
$F/{\sim_F}$ is a finite Boolean algebra.

A finite \emph{topped pre-Boolean algebra} is a finite preorder with a unique
largest point $t$ such that the subframe obtained by deleting $t$ is a finite
pre-Boolean algebra; equivalently, it is a finite pre-Boolean algebra with one
additional singleton top point. Inamdar and L{\"o}we call these frames
\emph{finite inverted lollipops}. We shall use their finite-frame theorem that
these frames characterize $\theoryf{S4.2Top}$ \cite[Theorem~6]{InamdarLowe}.
Inamdar and L{\"o}we define this logic using the Top axiom. Over $\theoryf{T}$,
Top is equivalent to
$\possible\necessary p\vee\possible\necessary\neg p$, and by duality to the
McKinsey axiom $\necessary\possible p\to\possible\necessary p$. Thus this is the
logic denoted here by $\SFourTwoOne=\SFourTwo+\axiomf{.1}$.

A finite rooted \emph{tree} is a finite partial order with a least element,
called the root, such that the predecessors of each node are linearly ordered. A
finite \emph{pretree} is a finite preorder whose quotient by mutual accessibility
is a finite rooted tree. A finite pretree is \emph{regular of type $(k,m)$},
where $k,m\geq 1$, if its quotient tree has every non-leaf with exactly $k$
children and every cluster has size $m$. Equivalently, it is obtained from a
finite rooted $k$-ary tree by replacing each node with an $m$-cluster. A finite
preorder is \emph{rooted} if it has a distinguished point from which every point
is accessible. A \emph{capped chain} is a finite rooted preorder whose cluster
quotient is a finite linear order and whose non-maximal clusters are singletons.
\end{definition}

\begin{proposition}[Generated subframes and bounded morphisms]\label{prop:frame-invariance}
Generated subframes preserve modal truth at all worlds they contain. Bounded
morphisms preserve modal truth under pullback valuations: if $\pi:F\to G$ is a
bounded morphism and $\pi^*V$ is the pullback of a valuation $V$ on $G$, then
\[
(F,\pi^*V,u)\satisfies\varphi
\quad\text{if and only if}\quad
(G,V,\pi(u))\satisfies\varphi
\]
for every propositional modal formula $\varphi$ and every $u\in F$.
\end{proposition}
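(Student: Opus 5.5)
We prove both assertions by induction on the complexity of the propositional modal formula $\varphi$, simultaneously for all points and all valuations. For generated subframes, let $F_A$ be generated by $A\subseteq X$ and let $V_A$ be the restriction of $V$ to $X_A$. We show that for every $u\in X_A$,
\[
(F,V,u)\satisfies\varphi \quad\text{if and only if}\quad (F_A,V_A,u)\satisfies\varphi .
\]
The atomic case is immediate, since $V_A(p)=V(p)\cap X_A$. The Boolean cases follow directly from the induction hypothesis. For the case $\possible\psi$, we use that $X_A$ is upward closed. If $u\in X_A$ and $u\le_F v$, then some $a\in A$ has $a\le_F u\le_F v$, so $v\in X_A$ by transitivity of the preorder. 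Hence the successors of $u$ in $F$ and in $F_A$ coincide, and the induction hypothesis applied to $\psi$ at those successors gives the equivalence. The case $\necessary$ is dual, or follows by treating $\necessary$ as $\neg\possible\neg$.

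For bounded morphisms, take $\pi:F\to G$ and set $(\pi^*V)(p):=\pi^{-1}[V(p)]$. The atomic case holds by definition of the pullback, and the Boolean cases are routine. The only substantive step is $\possible\psi$, which we split into two directions.

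\emph{Forth direction.} Suppose $(F,\pi^*V,u)\satisfies\possible\psi$, witnessed by some $v$ with $u\le_F v$ and $(F,\pi^*V,v)\satisfies\psi$. Condition (i) gives $\pi(u)\le_G\pi(v)$, and the induction hypothesis gives $(G,V,\pi(v))\satisfies\psi$. Hence $(G,V,\pi(u))\satisfies\possible\psi$.

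\emph{Back direction.} Suppose $(G,V,\pi(u))\satisfies\possible\psi$, witnessed by some $y$ with $\pi(u)\le_G y$ and $(G,V,y)\satisfies\psi$. Condition (ii) supplies $v$ with $u\le_F v$ and $\pi(v)=y$. The induction hypothesis then gives $(F,\pi^*V,v)\satisfies\psi$, so $(F,\pi^*V,u)\satisfies\possible\psi$.

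Again $\necessary$ is handled as $\neg\possible\neg$.

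The only point needing care is the back direction. Its witness must be lifted along $\pi$, and this lifting is exactly what clause (ii) of the definition of bounded morphism provides. Everything else is the standard structural induction, as in \cite{BRV}.
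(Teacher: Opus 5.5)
Your proof is correct. It is the standard induction on formulas that the paper does not write out but delegates to \cite[Chapter~2]{BRV} and \cite[Chapter~2]{CZ}. Upward closure of $X_A$, which uses transitivity, makes the successor sets in $F$ and $F_A$ coincide, and clauses (i) and (ii) of the bounded-morphism definition give the forth and back directions of the $\possible$ step.
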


These are the standard generated-subframe and bounded-morphism facts; see
\cite[Chapter~2]{BRV} or \cite[Chapter~2]{CZ}.

\begin{definition}[Buttons, dials, and $\sigma$-control]\label{def:control-statements}
Let $W$ be a world. A formula $b$ is a \emph{weak button} at $W$ if
$W\satisfies\possible\necessary b$. It is \emph{unpushed} if
$W\not\satisfies\necessary b$. A \emph{button} is a formula satisfying
$\necessary\possible\necessary b$. A button is \emph{pure} if
$b\to\necessary b$ holds throughout the relevant cone. A finite family of
buttons is \emph{independent} if, necessarily, any chosen subfamily can be
pushed without pushing any of the remaining buttons.

A finite list $d_0,\dots,d_{m-1}$ is a \emph{dial} if, necessarily, exactly one
$d_i$ holds and every $d_i$ is possible.

Suppose $\sigma$ is an unpushed pure button at $W$. A \emph{$\sigma$-dial} is a
dial whose values can be changed while remaining below $\sigma$, that is, while
preserving $\neg\sigma$. A \emph{pure $\sigma$-button} is a pure button $b$ such
that below $\sigma$, whenever $b$ is unpushed it can be pushed without pushing
$\sigma$, and once $\sigma$ is pushed then $b$ is automatically true.
\end{definition}

\begin{proposition}[Weak buttons and purification]\label{prop:button-purification}
Let $W$ be a world in a Kripke category.
\begin{enumerate}[label=\textup{(\arabic*)}]
    \item If $\SFourTwo$ is valid at $W$ for a given substitution language, then
    every weak button expressible in that language is already a button at $W$.
    \item Any independent family of buttons can be purified by replacing each
    $b_i$ with $\necessary b_i$.
\end{enumerate}
\end{proposition}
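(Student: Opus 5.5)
Both parts reduce to propositional manipulations inside the cone. The only structural input is that every cone is reflexive and transitive, since a Kripke category has identity arrows and is closed under composition. Hence $\SFour$ holds at every world of $\Cone(W)$ under any substitution; this follows from the Cone lemma (Proposition~\ref{prop:cone-lemma}), or directly from the semantics in Definition~\ref{def:modal-semantics}. In particular, for any assertion $\varphi$ and any $U$ in the cone, $U\satisfies\necessary\varphi\leftrightarrow\necessary\necessary\varphi$. The left-to-right direction uses transitivity (composition of arrows); the right-to-left direction uses reflexivity (the identity arrow).

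For (1), I will use the propositional formula $\possible\necessary p\to\necessary\possible\necessary p$. It is a theorem of $\SFourTwo$: instantiate $\axiomf{.2}$, $\possible\necessary q\to\necessary\possible q$, at $q=\necessary p$, and then use $\necessary p\leftrightarrow\necessary\necessary p$ from $\SFour$ together with normality to replace $\possible\necessary\necessary p$ by $\possible\necessary p$. Since $\SFourTwo\subseteq\Val_{\mathcal K}(W,\Gamma)$ and the weak button $b$ lies in $\Gamma$, the substitution instance $\possible\necessary b\to\necessary\possible\necessary b$ holds at $W$. Because $W\satisfies\possible\necessary b$, we get $W\satisfies\necessary\possible\necessary b$, so $b$ is a button. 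The point needing care is that validity here is a statement about substitution instances at the single world $W$. It need not be closed under necessitation, so I will not derive anything by applying necessitation at $W$; I only need that the whole formula is a theorem of $\SFourTwo$ and then substitute once.

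For (2), I take the convention that a button $b$ is \emph{pushed} at $U$ when $U\satisfies\necessary b$. I then check the three requirements for $b_i':=\necessary b_i$ using only the cone-wide equivalence $\necessary b_i\leftrightarrow\necessary\necessary b_i$.
\begin{enumerate}[label=\textup{(\alph*)}]
\item Purity: $b_i'\to\necessary b_i'$ is exactly $\necessary b_i\to\necessary\necessary b_i$, which is axiom $4$ and holds throughout the cone.
\item Button status: $\necessary\possible\necessary b_i'=\necessary\possible\necessary\necessary b_i$ is equivalent at every world to $\necessary\possible\necessary b_i$, which holds at $W$ by hypothesis.
\item Independence: at every world, "$b_i'$ is pushed" ($\necessary\necessary b_i$) is equivalent to "$b_i$ is pushed" ($\necessary b_i$). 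So the witnesses that push a chosen subfamily of the $b_i$ while leaving the rest unpushed serve unchanged for the $b_i'$, and independence transfers verbatim.
\end{enumerate}

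I expect the main obstacle to be definitional rather than mathematical. If "pushed" were read as "$b$ true" rather than "$\necessary b$ true", independence of the $b_i$ would not obviously yield worlds where the unchosen $\necessary b_j$ fail everywhere above. I would therefore fix the standard reading (pushed means necessary) explicitly at the start. With that reading everything reduces to the equivalence $\necessary\varphi\leftrightarrow\necessary\necessary\varphi$ in reflexive transitive cones.
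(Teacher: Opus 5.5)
Your proposal is correct and follows essentially the same argument as the paper. For (1) you obtain $\possible\necessary b\to\necessary\possible\necessary b$ from $\axiomf{4}$ and $\axiomf{.2}$, as a theorem of $\SFourTwo$ substituted once at $W$. For (2) you use that pushing $\necessary b_i$, namely $\necessary\necessary b_i$, is equivalent throughout the reflexive transitive cone to $\necessary b_i$, so the same worlds witness independence; your explicit checks of purity and button status just spell out what the paper leaves implicit.
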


\begin{proof}
For \textup{(1)}, from $\possible\necessary b$ one obtains
$\necessary\possible\necessary b$ using $\axiomf{4}$ and $\axiomf{.2}$. For
\textup{(2)}, the same pattern of worlds witnessing the independence of the
$b_i$'s witnesses the independence of the pure buttons $\necessary b_i$; pushing
$b_i$ is exactly making $\necessary b_i$ true. Compare \cite[Section~4]{WolSets}.
\end{proof}

\begin{lemma}[Regularizing finite pretrees]\label{lem:pretree-regularization}
Let $F$ be a finite pretree with initial node $w_0$. Then there are a finite
regular pretree $F^\ast$, an initial node $w_0^\ast\in F^\ast$, and a
root-preserving bounded morphism
\[
\pi:(F^\ast,w_0^\ast)\to(F,w_0).
\]
\end{lemma}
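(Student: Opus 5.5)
We build $F^\ast$ by unravelling $F$ from $w_0$ and then padding, so that every cluster has the same size $m$ and every non-leaf node of the quotient tree has the same number $k$ of children. First replace $F$ by its generated subframe $F_{w_0}$. By Proposition~\ref{prop:frame-invariance} nothing is lost, and a bounded morphism onto $F_{w_0}$ followed by the inclusion remains a bounded morphism into $F$ for everything above $w_0$. Since $F$ is a pretree, $F_{w_0}$ is again a finite pretree, and its quotient $T=F_{w_0}/{\sim}$ is a finite rooted tree with root $[w_0]$. Let $m$ be the maximal cluster size in $F_{w_0}$ and $k$ the maximal number of children of a node of $T$, with $k\ge1$.

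\textbf{Step 1: regularize the clusters.} For each cluster $C$, choose a surjection $s_C:\{1,\dots,m\}\to C$, with $s_{[w_0]}(1)=w_0$ when $C=[w_0]$. Let $F'$ have points $(C,i)$ with $(C,i)\le(D,j)$ iff $C\le_T D$. The map $(C,i)\mapsto s_C(i)$ is monotone because the preorder on $F_{w_0}$ is determined by the cluster order. It satisfies the back condition: if $s_C(i)\le y$, put $D=[y]$ and choose $j$ with $s_D(j)=y$, which exists by surjectivity.

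\textbf{Step 2: regularize the branching.} Build a rooted $k$-ary tree $T^\ast$ together with a tree map $\rho:T^\ast\to T$ by recursion.
\begin{itemize}
\item The root maps to $[w_0]$.
\item A node mapped to a leaf of $T$ is a leaf of $T^\ast$.
\item A node $x$ mapped to a non-leaf $C$ with children $C_1,\dots,C_r$, where $1\le r\le k$, gets exactly $k$ children. These are mapped onto $C_1,\dots,C_r$ surjectively, repeating $C_r$ as needed.
\end{itemize}
This recursion terminates because depth in $T^\ast$ equals depth of the image in $T$, so $T^\ast$ is finite.

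A subtlety arises because $T$ may have leaves at different depths. A $k$-ary pretree in the stated sense only requires non-leaves to have exactly $k$ children, and leaves may sit at any depth. So no depth padding is needed.

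Now let $F^\ast$ replace each node of $T^\ast$ by an $m$-cluster. Map $(x,i)\mapsto(\rho(x),i)\in F'$, and compose with Step~1 to obtain $\pi$. The initial node is $w_0^\ast=(\mathrm{root},1)$, which maps to $w_0$.

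\textbf{Step 3: verify the bounded morphism conditions.} Monotonicity holds because $\rho$ preserves the tree order, since it sends children to children. For the back condition, suppose $\rho(x)\le_T D$. Walk down from $x$ along the path in $T$ from $\rho(x)$ to $D$: at each step, surjectivity of children onto children gives a child of the current node mapped to the next node on the path. This yields $y\ge x$ with $\rho(y)=D$, and we then adjust the cluster index as in Step~1. Bounded morphisms compose, which completes the argument.

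The main point needing care is the back condition for the branching step, which relies on the children map being surjective at every node. This is exactly why we pad by repetition rather than adding fresh, unmapped branches. A secondary point is checking that the definition of regular pretree permits leaves at varying depths. If it did not, we would instead extend short branches by a chain of children all mapped to the same leaf. But a leaf cannot have distinct clusters above it mapping back to it, and monotonicity would then force the reflexive cluster order to fail the back condition only if the new points had to see something new; they do not, since they map to the same leaf cluster. So this alternative also works, and I would verify it if needed.
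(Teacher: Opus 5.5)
Your proof is correct and follows essentially the same route as the paper. You unfold the quotient tree into a $k$-ary tree $T^\ast$ using surjections onto each set of children, blow each node up to an $m$-cluster, and map via surjections onto the original clusters, so your two-step composite is exactly the paper's map $\pi(s,i)=\sigma_{\pi_T(s)}(i)$; the preliminary passage to $F_{w_0}$ and the closing discussion of depth padding are unnecessary, since $w_0$ is already initial and the definition of a regular pretree allows leaves at different depths.
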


\begin{proof}
Let $T=F/{\equiv}$ be the quotient tree by mutual accessibility, rooted at
$r=[w_0]$, and let $C_t$ be the cluster corresponding to $t\in T$. Write
$\operatorname{Child}_T(t)$ for the set of children of $t$ in $T$. Set
\[
k:=\max\bigl(\{1\}\cup\{\lvert\operatorname{Child}_T(t)\rvert:t\in T
\text{ non-leaf}\}\bigr),
\qquad
m:=\max\{\lvert C_t\rvert:t\in T\}.
\]
Choose surjections $\rho_t:\{0,\dots,k-1\}\twoheadrightarrow
\operatorname{Child}_T(t)$ for non-leaf $t$, and surjections
$\sigma_t:\{0,\dots,m-1\}\twoheadrightarrow C_t$ for all $t\in T$.
Unfold $T$ into a finite rooted $k$-ary tree $T^\ast$: the root is
$\varnothing$ with $\pi_T(\varnothing)=r$, and if $s\in T^\ast$ has
$\pi_T(s)=t$ non-leaf, then for each $i<k$ add a child $s^\frown i$ and put
$\pi_T(s^\frown i)=\rho_t(i)$. Leaves acquire no children. The map
$\pi_T:T^\ast\to T$ is a root-preserving bounded morphism. Define
\[
F^\ast:=T^\ast\times\{0,\dots,m-1\},
\qquad
(s,i)\le_{F^\ast}(u,j)
\quad\text{if and only if}\quad
s\le_{T^\ast}u.
\]
Then $F^\ast$ is regular of type $(k,m)$. Choose $i_0<m$ with
$\sigma_r(i_0)=w_0$, set $w_0^\ast=(\varnothing,i_0)$, and define
$\pi(s,i):=\sigma_{\pi_T(s)}(i)$. The monotonicity and back conditions follow
from those of $\pi_T$ and the surjectivity of the maps $\sigma_t$.
\end{proof}

\begin{definition}[Labelings and railyards]\label{def:labelings-railyards}
Let $F$ be a finite frame with initial node $w_0$. A \emph{labeling of $F$ above
$W$} is an assignment $u\mapsto\Phi_u$ of assertions to the nodes of $F$ such
that:
\begin{enumerate}[label=\textup{(\arabic*)}]
    \item $W\satisfies\Phi_{w_0}$;
    \item every world in $\Cone(W)$ satisfies exactly one label;
    \item if a world in $\Cone(W)$ satisfies $\Phi_u$, then the possible labels
    above it are exactly the $\Phi_v$ with $u\le_F v$.
\end{enumerate}
When the finite frame is a pretree, such a labeling will be called a
\emph{railyard labeling}.
\end{definition}

The next result is the standard labeling lemma in the form used below
\cite[Section~4]{WolSets}.

\begin{proposition}[Labeling lemma]\label{prop:labeling-lemma}
Suppose a finite frame $F$ with initial node $w_0$ is labeled above $W$ by
assertions from a substitution language $\mathcal L$. Then every propositional
Kripke model on $F$ can be simulated at $W$ by substituting propositional
variables with Boolean combinations of the labels. Consequently, if every frame
in a class $\mathfrak F$ admits such a labeling above $W$, then any propositional
formula not valid in $\mathfrak F$ fails at $W$ under a suitable substitution.
\end{proposition}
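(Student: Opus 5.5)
The labeling lemma is proved by the standard simulation of a finite Kripke model inside the cone via the labels. Fix a finite frame $F=(X,\le_F)$ with initial node $w_0$, a labeling $u\mapsto\Phi_u$ above $W$ in the substitution language, and a valuation $V$ on $F$. For each propositional variable $p$ set
\[
\varphi_p:=\bigvee_{u\in V(p)}\Phi_u,
\]
a finite Boolean combination of labels, hence again in the substitution language (assuming, as for all languages used here, closure under finite Boolean combinations). The claim is that for every propositional modal formula $\psi$, every $u\in X$, and every world $U\in\Cone(W)$ with $U\satisfies\Phi_u$,
\[
U\satisfies\psi(\varphi_p)_p\quad\text{if and only if}\quad(F,V,u)\satisfies\psi .
\]
I will prove this by induction on $\psi$.

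For the base case $\psi=p$, condition (2) says $U$ satisfies exactly one label, namely $\Phi_u$. So $U\satisfies\varphi_p$ exactly when $u\in V(p)$. Boolean connectives are immediate.

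For $\psi=\possible\chi$, suppose first $(F,V,u)\satisfies\possible\chi$, witnessed by some $v\ge_F u$. Condition (3) makes $\Phi_v$ possible at $U$, so there is $f:U\to U'$ in $\Cone(W)$ with $U'\satisfies\Phi_v$. The induction hypothesis then gives $U'\satisfies\chi(\varphi_p)$, hence $U\satisfies\possible\chi(\varphi_p)$. Conversely, suppose $f:U\to U'$ witnesses $\possible\chi(\varphi_p)$. By (2), $U'$ satisfies a unique label $\Phi_v$. By (3), since $\Phi_v$ is realized above $U$, it is among the possible labels, so $u\le_F v$. The induction hypothesis gives $(F,V,v)\satisfies\chi$, so $u\satisfies\possible\chi$.

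One point needs care: the labels may carry parameters, and these are transported along arrows via $f\circ\nu$. The equivalence is therefore proved uniformly for the transported interpretation at every world of the cone, which is exactly how the satisfaction clauses in the semantics section are defined.

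Applying the equivalence at $W$ itself, which satisfies $\Phi_{w_0}$ by (1), shows that $W\satisfies\psi(\varphi_p)$ if and only if $(F,V,w_0)\satisfies\psi$. This gives the first assertion.

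For the consequence, let $\psi$ be invalid in $\mathfrak F$. Then some frame in $\mathfrak F$ with a valuation and a point refutes $\psi$. Restrict to the subframe generated by that point, which preserves truth by Proposition~\ref{prop:frame-invariance}. If $\mathfrak F$ consists of rooted frames with designated initial nodes, the refuting point can be taken to be the initial node, provided $\mathfrak F$ is closed under generated subframes; otherwise I would take the labeling for the generated subframe, as is done in the applications. The simulation then refutes $\psi$ at $W$.

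The main obstacle is this last bookkeeping point, namely that the failing point must be the labeled root. The modal induction itself is routine once conditions (2) and (3) are used in both directions.
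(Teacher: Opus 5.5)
Your proof is correct and is exactly the standard simulation argument that the paper does not reprove but cites from \cite[Section~4]{WolSets}. You substitute $p\mapsto\bigvee_{u\in V(p)}\Phi_u$ and show by induction, using conditions (2) and (3) in both directions of the $\possible$ step, that a world carrying $\Phi_u$ satisfies the translated formula exactly when $(F,V,u)\satisfies\psi$; your caveat that the refutation must sit at the designated initial node is the right one, and it is the bookkeeping the paper itself carries out in its applications via generated subframes and bounded-morphic preimages (see Corollary~\ref{cor:capped-chains-characterize} and the proof of Proposition~\ref{prop:upper-bounds}(3)).
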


We shall also use the usual Jankov--Fine formulas; see \cite[Chapter~6]{BRV} or
\cite[Section~2.5]{CZ}.

\begin{proposition}[Jankov--Fine witnesses]\label{prop:jankov-fine}
For every finite pointed reflexive transitive frame $(F,w_0)$ there is a
propositional formula $\chi_F$ which fails at $(F,w_0)$ under the identity
valuation and whose failure at a world $W$ yields a labeling of $F$ above $W$.
\end{proposition}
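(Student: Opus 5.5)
We must prove the Jankov--Fine witnesses proposition: for a finite pointed reflexive transitive frame $(F,w_0)$ there is $\chi_F$ failing at $(F,w_0)$ under the identity valuation, such that any failure of $\chi_F$ at a world $W$ yields a labeling of $F$ above $W$. The plan is the standard Jankov--Fine construction, adapted to the labeling conditions of Definition~\ref{def:labelings-railyards}. By restricting to the generated subframe $F_{w_0}$ (Proposition~\ref{prop:frame-invariance}), I may assume every node lies above $w_0$.

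Introduce one propositional variable $p_u$ for each node $u\in F$; the identity valuation makes $p_u$ true exactly at $u$. Form the conjunction $\Delta$ of the following clauses:
\begin{enumerate}[label=\textup{(\roman*)}]
\item exactly one $p_u$ holds;
\item for each pair $u\le_F v$, the clause $p_u\to\possible p_v$;
\item for each $u$, the clause $p_u\to\necessary\bigvee_{u\le_F v}p_v$.
\end{enumerate}
Set
\[
\chi_F:=\neg\bigl(p_{w_0}\wedge\necessary\Delta\bigr).
\]
At $(F,w_0)$ under the identity valuation, $p_{w_0}$ holds. Every clause of $\Delta$ holds at every node: clause (i) is immediate, clause (ii) holds by reflexivity and the fact that $v$ witnesses $p_v$, and clause (iii) holds by transitivity. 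Since all nodes lie above $w_0$, the formula $\necessary\Delta$ holds at $w_0$, so $\chi_F$ fails there.

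Conversely, suppose $\chi_F$ fails at $W$ under a substitution $p_u\mapsto\Phi_u$. Then $W\satisfies\Phi_{w_0}$ and $W\satisfies\necessary\Delta[\Phi]$. Since $\necessary$ ranges over $\Cone(W)$ and the cone is reflexive, every world in $\Cone(W)$ satisfies $\Delta[\Phi]$. I check the three labeling conditions in turn.
\begin{enumerate}[label=\textup{(\arabic*)}]
\item Condition (1) is $W\satisfies\Phi_{w_0}$.
\item Condition (2) follows from clause (i), holding at every world of the cone.
\item For condition (3), suppose $U\satisfies\Phi_u$. Clause (ii) makes each $\Phi_v$ with $u\le_F v$ possible at $U$. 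Clause (iii) ensures that every world accessible from $U$ satisfies some $\Phi_v$ with $u\le_F v$, so no other label is possible above $U$.
\end{enumerate}
Hence $u\mapsto\Phi_u$ is a labeling of $F$ above $W$.

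The only delicate point is the semantics of nested modalities. A world accessible from $U\in\Cone(W)$ must itself lie in $\Cone(W)$, so that $\necessary\Delta$ at $W$ really governs it. This holds because the satisfaction relation of Definition~\ref{def:modal-semantics} is evaluated inside $\Cone(W)$ and composition of morphisms gives transitivity. One also needs $\possible$ in clause (ii) to be witnessed within the cone, which it is by definition.

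If $F$ is instead required to be labeled on all of $F$ rather than $F_{w_0}$, labels of nodes not above $w_0$ are simply never realized. This does not matter, because the labeling lemma (Proposition~\ref{prop:labeling-lemma}) only uses the generated part.
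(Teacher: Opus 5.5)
Your proof is correct and is the standard Jankov--Fine construction, which the paper cites from \cite{BRV,CZ} rather than proving; your clause (iii), combined with (i), plays the role of the usual negative conjuncts $p_u\to\neg\possible p_v$ for $u\not\le_F v$. Two small points: clause (iii) holds at $(F,w_0)$ trivially under the identity valuation (take $v=y$), so transitivity is not needed there, and the restriction to $F_{w_0}$ is unnecessary, since the same formula indexed by all nodes of $F$ works verbatim.
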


\begin{proposition}[Upper-bounds theorem]
\label{prop:upper-bounds}
Let $W$ be a world in a Kripke category $\mathcal K$, and let $\mathcal L$ be a
nonempty substitution language closed under Boolean connectives.
\begin{enumerate}[label=\textup{(\arabic*)}]
    \item If $W$ admits arbitrarily long finite dials whose values lie in
    $\mathcal L$, then
    \[
        \Val_{\mathcal K}(W,\mathcal L)\subseteq \SFive.
    \]

    \item If $W$ admits arbitrarily large finite independent families of
    unpushed pure buttons together with arbitrarily long finite dials, independent
    from those buttons, all lying in $\mathcal L$, then
    \[
        \Val_{\mathcal K}(W,\mathcal L)\subseteq \SFourTwo.
    \]
    Moreover, if this control-statement hypothesis fails, then there is a
    single propositional modal formula $\chi\notin\SFourTwo$ such that
    \[
        \chi\in \Val_{\mathcal K}(W,\mathcal L).
    \]

    \item If every finite pretree admits a railyard labeling above $W$ using
    assertions from $\mathcal L$, then
    \[
        \Val_{\mathcal K}(W,\mathcal L)\subseteq \SFour.
    \]
    For this clause it is enough to handle only finite pretrees all of whose
    clusters have the same size and all of whose branching clusters have the
    same degree.

    \item If $W$ has an unpushed pure button $\sigma$ and, below $\sigma$,
    arbitrarily long finite $\sigma$-dials together with arbitrarily large
    finite independent families of unpushed pure $\sigma$-buttons, these being
    independent below $\sigma$, and all belonging to $\mathcal L$, then
    \[
        \Val_{\mathcal K}(W,\mathcal L)\subseteq \SFourTwoOne.
    \]
\end{enumerate}
\end{proposition}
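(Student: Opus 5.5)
The plan is to treat the four clauses separately. Each one reduces, via the labeling lemma (Proposition~\ref{prop:labeling-lemma}), to building a labeling above $W$ of a suitable class of finite frames by assertions from $\mathcal L$. Then I invoke the known frame characterizations: $\SFive$ is complete for finite complete clusters, $\SFourTwo$ for finite pre-Boolean algebras, $\SFour$ for finite pretrees, and $\SFourTwoOne$ for finite topped pre-Boolean algebras (\cite[Theorem~6]{InamdarLowe}). Suppose $\psi$ lies outside the target logic. Then $\psi$ fails at the root of some finite frame $F$ in the relevant class. A labeling of $F$ above $W$ lets us simulate that countermodel, substituting for each propositional variable the disjunction of the labels of the nodes where it is true. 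By Proposition~\ref{prop:frame-invariance} and the definition of a labeling, the substituted formula then takes the same truth value at every world as the original formula takes at the corresponding node. In particular $\psi$ fails at $W$.

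For clause (1), given a dial $d_0,\dots,d_{n-1}$ with $n=|F|$, I label the $n$-cluster by $\Phi_i:=d_i$. The dial conditions are exactly clauses (2) and (3) of Definition~\ref{def:labelings-railyards}. For clause (1) of that definition, I rotate the indices so that $W$ satisfies $\Phi_{w_0}$.

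For clause (2), take $F$ a pre-Boolean algebra whose quotient is $\powerset(N)$ with clusters of size $m$. Choose $N$ independent unpushed pure buttons $b_j$ and an independent $m$-dial $d_i$. Label the node $(S,i)$ by
\[
\textstyle\bigwedge_{j\in S}b_j\wedge\bigwedge_{j\notin S}\neg b_j\wedge d_i .
\]
Purity ensures that the upward-closed pattern of pushed buttons behaves monotonically, and independence makes every superset reachable. The buttons are unpushed, so $W$ sits at $S=\varnothing$. For the "moreover" part, I would argue contrapositively. If the control statements are bounded, then a Jankov--Fine-type formula (Proposition~\ref{prop:jankov-fine}) for a sufficiently large pre-Boolean algebra with large clusters cannot fail at $W$, since its failure would produce a labeling and hence buttons and dials of the forbidden size. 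That formula is the required $\chi$. Rather than reprove this, I expect to cite the general theory of \cite[Section~4]{WolSets}.

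For clause (3), railyard labelings give the result directly from the completeness of $\SFour$ for finite pretrees. For the "enough to handle regular pretrees" remark, Lemma~\ref{lem:pretree-regularization} gives a root-preserving bounded morphism from a regular pretree onto $F$. A countermodel on $F$ therefore pulls back to a countermodel on the regular pretree by Proposition~\ref{prop:frame-invariance}, so labeling regular pretrees suffices.

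For clause (4), take a topped pre-Boolean algebra. The top singleton is labeled $\sigma$. Each node $(S,i)$ of the lower pre-Boolean part is labeled
\[
\textstyle\neg\sigma\wedge\bigwedge_{j\in S}b_j\wedge\bigwedge_{j\notin S}\neg b_j\wedge d_i ,
\]
using the $\sigma$-buttons and $\sigma$-dials. The key points are two. First, once $\sigma$ is pushed it stays pushed, by purity, so the top is a dead end. Second, from every node $\sigma$ can be pushed, since $\sigma$ is a button and hence $\necessary\possible\necessary\sigma$ holds. Below $\sigma$ the argument is that of clause (2).

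The main obstacle is clause (2)'s converse direction, which is not a labeling argument. I would try to reduce it to \cite{WolSets} or \cite{HamkinsLeibmanLowe} rather than reprove it.
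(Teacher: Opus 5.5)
Your proposal is correct and follows essentially the paper's route: labelings of the canonical frames $F_{n,m}=\powerset(n)\times m$ and their topped versions $E_{n,m}$ by conjunctions of $\neg\sigma$, button values and dial values, the regularization-plus-pullback reduction for clause (3), and, for the sharpness part of (2), exactly the Jankov--Fine contrapositive the paper uses (so that part is less of an obstacle than you suggest). The only step you leave implicit, which the paper states, is that every finite (topped) pre-Boolean algebra is a bounded-morphic image of some $F_{n,m}$ (resp.\ $E_{n,m}$), so it suffices to label these uniform-cluster frames.
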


\begin{proof}
Clauses~\textup{(1)}, the upper-bound implication in~\textup{(2)}, and the
pretree-labeling implication in~\textup{(3)} are the standard control-statement
upper bounds for Kripke categories; see \cite[Section~4]{WolSets} together with
\cite{CZ}. We record only the extra arguments used later.

For later reference, when $n,m\ge 1$ we write
\[
F_{n,m}:=\powerset(n)\times m,
\qquad
(S,i)\le (T,j)
\quad\text{if and only if}\quad
S\subseteq T,
\]
with initial node $(\varnothing,0)$. These canonical finite pre-Boolean
algebras are sufficient for the finite-frame upper-bound argument. Every finite
pre-Boolean algebra is a bounded-morphic image of some $F_{n,m}$, obtained by
choosing $m$ large enough to map onto each cluster. If $b_0,\dots,b_{n-1}$ are
pure buttons and $d_0,\dots,d_{m-1}$ is an independent $m$-dial, then for
$(S,i)\in F_{n,m}$ set
\[
\Phi_{S,i}:=d_i\wedge \bigwedge_{k\in S} b_k\wedge
\bigwedge_{k\notin S}\neg b_k.
\]
Purity makes the Boolean coordinate persistent under further accessibility,
while the dial records the cluster coordinate.

For the sharpness clause in \textup{(2)}, suppose the displayed
control-statement hypothesis fails. If every canonical frame $F_{n,m}$ were
labelable over $W$, then Boolean combinations of the labels $\Phi_{S,i}$ would
recover, for arbitrary $n$ and $m$, an $m$-dial and $n$ independent pure
buttons.
Hence some $F_{n,m}$ is not labelable. Proposition~\ref{prop:jankov-fine} then yields
a witness
$\chi_{F_{n,m}}\in\Val_{\mathcal K}(W,\mathcal L)$, while
$F_{n,m}\satisfies\SFourTwo$ and $F_{n,m}\not\satisfies\chi_{F_{n,m}}$; so
$\chi_{F_{n,m}}\notin\SFourTwo$.

For the reduction in \textup{(3)}, we use the standard fact that finite pretrees
form a complete finite frame class for $\SFour$. Every formula outside
$\SFour$ fails at the initial node of some finite pretree; see, for
example, \cite[Chapter~3]{BRV} or \cite[Chapter~5]{CZ}. Thus, if
$\psi\notin\SFour$, choose a finite pretree $F$, a node $w_0\in F$, and a
valuation $V$ with
$(F,V,w_0)\not\satisfies\psi$. Passing to the generated subframe $F_{w_0}$
preserves the refutation by generated-subframe preservation, so we may
assume that $w_0$ is initial. By Lemma~\ref{lem:pretree-regularization}, there are a
finite regular pretree $F^\ast$, an initial node $w_0^\ast$, and a
root-preserving bounded morphism $\pi:(F^\ast,w_0^\ast)\to(F,w_0)$. Pulling back
$V$ along $\pi$ and applying bounded-morphism invariance, we obtain
$(F^\ast,\pi^\ast V,w_0^\ast)\not\satisfies\psi$. A railyard labeling of $F^\ast$
above $W$ using assertions from $\mathcal L$ now transfers this refutation to
$W$ via Proposition~\ref{prop:labeling-lemma}. Hence
$\Val_{\mathcal K}(W,\mathcal L)\subseteq\SFour$.

For \textup{(4)}, the frame-theoretic input is the theorem of Inamdar and
L{\"o}we. It is enough to label finite topped pre-Boolean algebras. The
canonical topped pre-Boolean algebras suffice for this purpose. Namely, let
$E_{n,m}$ be obtained from $F_{n,m}$ by adding a singleton top node $t$ above all
points. Every finite topped pre-Boolean algebra is a bounded-morphic image of
some $E_{n,m}$, by choosing $n$ to match the number of atoms in the Boolean
quotient, choosing $m$ large enough to map onto every cluster below the top
point, and sending the top point to the top point. We therefore only need to
describe the labeling of $E_{n,m}$. Fix an $m$-value $\sigma$-dial
$d_0,\dots,d_{m-1}$ below $\sigma$
and an independent family $b_0,\dots,b_{n-1}$ of unpushed pure
$\sigma$-buttons. Label $t$ by $\sigma$, and label each $(S,i)\in F_{n,m}$ by
\[
\neg\sigma\wedge d_i\wedge \bigwedge_{k\in S} b_k\wedge
\bigwedge_{k\notin S}\neg b_k.
\]
Purity of $\sigma$ keeps the top node persistent, while below $\sigma$ the dial
controls the cluster coordinate and the pure $\sigma$-buttons control the
Boolean coordinate. Thus the accessibility relation is exactly that of
$E_{n,m}$, and Proposition~\ref{prop:labeling-lemma} yields
$\Val_{\mathcal K}(W,\mathcal L)\subseteq\SFourTwoOne$.
\end{proof}

\par\smallskip\noindent
We shall also use the following simple way of producing worlds with $\SFive$ modal theory.

\begin{theorem}
\label{thm:terminal-S5}
Let $W$ be a world in a Kripke category $\mathcal K$.
\begin{enumerate}[label=\textup{(\arabic*)}]
    \item If every world of $\mathcal K$ maps to $W$, then $W$ validates
    $\SFive$ for sentential substitutions.
    \item If in addition every map into $W$ is unique, then $W$ validates
    $\SFive$ for substitutions with parameters from $W$.
\end{enumerate}
\end{theorem}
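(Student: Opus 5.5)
The plan is to show that, under the stated hypotheses, sentential (respectively parametric) substitution instances are evaluated exactly as in a propositional Kripke model on a frame whose accessibility relation is an equivalence relation. Such frames validate $\SFive$, so the claim follows. Throughout, the cone $\Cone(W)$ is fixed and all arrows are taken inside it. Identities and composition make the relation ``there is an arrow $U\to V$ in $\Cone(W)$'' reflexive and transitive. The only extra ingredient needed is symmetry, and that is where the hypotheses enter.

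For \textup{(1)}, let $R$ be the relation on worlds of $\Cone(W)$ given by $U\mathrel{R}V$ iff there is an arrow $U\to V$ in $\Cone(W)$.
\begin{itemize}
\item Every $U\in\Cone(W)$ has an arrow $U\to W$ by hypothesis, and $W\in\Cone(W)$ via $\mathrm{id}_W$.
\item Since $U\in\Cone(W)$, there is also an arrow $W\to U$. Composing through $W$ then gives $U\mathrel{R}V$ for all $U,V\in\Cone(W)$, so $R$ is universal.
\item For a sentence $\varphi$ the valuation plays no role, so $U\satisfies\possible\varphi$ iff $V\satisfies\varphi$ for some $V$ with $U\mathrel{R}V$.
\end{itemize}
By induction on a propositional formula $\psi(p_0,\dots,p_n)$, the truth of $\psi(\varphi_0,\dots,\varphi_n)$ at $U$ coincides with the truth of $\psi$ at $U$ in the Kripke model $(\Cone(W),R)$ with $V(p_i)=\{U:U\satisfies\varphi_i\}$. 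This frame may be a proper class, but the induction only uses the clauses, so that is harmless. The frame is a single cluster, so every $\SFive$ formula is true there, in particular at $W$.

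For \textup{(2)}, parameters force us to track where the named elements go, so I will work with pointed worlds $(U,f)$, where $f:W\to U$ is an arrow in $\Cone(W)$. For an $\Lmord(W)$-assertion $\varphi$, truth at $(U,f)$ means $U\satisfies\varphi$ with the constants interpreted via $f$. The accessibility relation is $(U,f)\to(V,g\circ f)$ for arrows $g:U\to V$.
\begin{itemize}
\item The modal clause of Definition~\ref{def:modal-semantics} says exactly that $\possible\varphi$ holds at $(U,f)$ iff $\varphi$ holds at some accessible pointed world. So again the inductive translation to a Kripke model on pointed worlds works, and the relation is reflexive and transitive.
\item For symmetry, take any $(U,f)$ and choose $h:U\to W$. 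Then $h\circ f:W\to W$, and uniqueness of maps into $W$ forces $h\circ f=\mathrm{id}_W$. Hence $(U,f)$ accesses $(W,\mathrm{id}_W)$.
\item Conversely, $(W,\mathrm{id}_W)$ accesses every $(U,f)$ via $f$ itself.
\end{itemize}
So all pointed worlds form one cluster, and $\SFive$ holds at $(W,\mathrm{id}_W)$, which is the evaluation at $W$ with parameters.

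The main point requiring care is the symmetry step in \textup{(2)}: mere existence of a return map $U\to W$ is not enough, since it might move the parameters. Uniqueness is exactly what guarantees that the return map restores $\mathrm{id}_W$ on the parameters. The rest is routine bookkeeping in the induction, namely checking that truth depends only on the pointed world (or on the world, for sentences).
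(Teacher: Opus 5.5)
Your proof is correct and rests on the same key observations as the paper's. In \textup{(1)}, any two worlds of the cone are mutually accessible through $W$. In \textup{(2)}, uniqueness of maps into $W$ forces every round trip $W\to\cdots\to W$ to be $\mathrm{id}_W$, so the return arrow restores the transported parameters. The difference is only in packaging. You realize the cone, or its pointed worlds $(U,f)$, as a single-cluster Kripke model and appeal to soundness of $\SFive$ on universal frames. The paper instead verifies the $\axiomf{5}$ scheme $\possible\necessary\vartheta\to\vartheta$ throughout the cone and then gets $\SFive$ from the validity of $\SFour$ together with the cone lemma.
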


\begin{proof}
We verify the $\axiomf{5}$ scheme on the whole cone above $W$, in the form
$\possible\necessary\vartheta\to\vartheta$.  Let $U$ be accessible from $W$.  By
hypothesis there is an arrow $U\to W$, and since $W\to U$ as well, every two
worlds in the cone above $W$ are mutually accessible by going through $W$.
Thus, if $U\satisfies\possible\necessary\vartheta$, choose $f:U\to V$ with
$V\satisfies\necessary\vartheta$.  The composite $V\to W\to U$ is an arrow from $V$ back
to $U$, so the boxed sentence at $V$ gives $U\satisfies\vartheta$.  This proves
clause \textup{(1)}.

For clause \textup{(2)}, let $h:W\to U$ be the arrow transporting the parameters
from $W$ to $U$, and suppose
$U\satisfies\possible\necessary\psi[h(\bar a)]$.  Choose $f:U\to V$ with
$V\satisfies\necessary\psi[f h(\bar a)]$.  Let $g:V\to W$ be the unique arrow into $W$.
Then $gfh:W\to W$ is an arrow into $W$, and hence $gfh=\mathrm{id}_W$.  Therefore
the arrow $hg:V\to U$ sends the parameters $f h(\bar a)$ back to $h(\bar a)$,
and the boxed formula at $V$ yields $U\satisfies\psi[h(\bar a)]$.  Since
$\SFour$ is valid in every Kripke category, both claims follow.
\end{proof}

\par\smallskip\noindent

Let $\kappa$ be an ordinal and let $\mathcal K$ be a Kripke category.
A \emph{$\kappa$-chain} in $\mathcal K$ is a coherent system
\[
\langle W_\alpha,f_{\alpha\beta}:W_\alpha\to W_\beta\rangle_{
\alpha\le\beta<\kappa}
\]
with $f_{\alpha\alpha}=\mathrm{id}_{W_\alpha}$ and
$f_{\beta\gamma}\circ f_{\alpha\beta}=f_{\alpha\gamma}$ whenever
$\alpha\le\beta\le\gamma<\kappa$.
A \emph{weak upper bound} of this chain is a world $N$ together with arrows
$h_\alpha:W_\alpha\to N$ for all $\alpha<\kappa$, with no commutativity
requirement.
A \emph{cocone} on the chain is a world $N$ together with arrows
$g_\alpha:W_\alpha\to N$ such that
$g_\beta\circ f_{\alpha\beta}=g_\alpha$ for all $\alpha\le\beta<\kappa$.
Such a cocone is \emph{covering} if every element of $N$ has the form
$g_\alpha(a)$ for some $\alpha<\kappa$ and some $a\in W_\alpha$.
\par\smallskip

\begin{theorem}
\label{prop:chains-to-S5}
Let $\mathcal K$ be a Kripke category.
\begin{enumerate}[label=\textup{(\arabic*)}]
    \item If every countable chain in $\mathcal K$ has a weak upper bound, then
    every world accesses one validating $\SFive$ with respect to any fixed
    countable family of sentential substitution instances.

    \item If every set-sized chain in $\mathcal K$ admits a cocone, then every
    world accesses one validating $\SFive$ with respect to sentences in any
    fixed set-sized language.

    \item If every set-sized chain in $\mathcal K$ admits a cocone and every
    countable chain in $\mathcal K$ admits a covering cocone, then every world
    accesses one validating $\SFive$ with respect to any fixed set-sized
    language, allowing parameters.
\end{enumerate}
\end{theorem}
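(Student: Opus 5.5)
The idea is to build, by transfinite recursion, a chain of worlds above the given world $W$ in which every relevant ``possibly necessary'' statement gets realized. We then pass to an upper bound of the chain and verify $\axiomf{5}$ in the form $\possible\necessary\vartheta\to\vartheta$ throughout its cone. Since $\SFour$ is always valid, the cone lemma (Proposition~\ref{prop:cone-lemma}) reduces $\SFive$ to checking instances of $\axiomf{5}$ at every world of the cone above the final world $N$. The key observation is a monotonicity, or ``button'', property: once $U\satisfies\necessary\vartheta$ for a sentence $\vartheta$, every world accessible from $U$ also satisfies $\necessary\vartheta$, by transitivity of composition. So we aim for an $N$ such that, for each substitution instance $\vartheta$ in the fixed family, either $N\satisfies\necessary\vartheta$, or $\necessary\vartheta$ is not possible at $N$. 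In the first case every $U$ above $N$ satisfies $\necessary\vartheta$ and hence $\vartheta$. In the second case no $U$ above $N$ satisfies $\possible\necessary\vartheta$, since otherwise $N$ would too. Either way $\axiomf{5}$ holds at all of $\Cone(N)$.

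For clause (1), enumerate the countable family as $\vartheta_0,\vartheta_1,\dots$, interleaving so that each one is treated. Set $W_0=W$. At stage $n$, if $W_n\satisfies\possible\necessary\vartheta_n$, move along an arrow to a world satisfying $\necessary\vartheta_n$; otherwise stay put. Let $N$ be a weak upper bound with arrows $h_n:W_n\to N$. Commutativity is unnecessary because sentences carry no parameters. If $\necessary\vartheta_n$ was forced at stage $n+1$, then it persists to $N$ via $h_{n+1}$. If instead $\possible\necessary\vartheta_n$ failed at $W_n$, it fails at $N$ as well, since $N$ is accessible from $W_n$ via $h_n$. I will also note that $N$ is accessible from $W$ through $h_0$.

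For clause (2), the family of sentences in a set-sized language is a set, so we run the same recursion transfinitely over a well-ordering of it. At limit stages we take cocones, which keeps the system coherent so that the recursion can continue. At the end we take a final cocone. The verification is the same as in clause (1).

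For clause (3), parameters must be handled, and this is the main obstacle. Persistence of $\necessary\vartheta[\bar a]$ along arrows still holds, with the parameters transported. However, the final world has new elements, and statements about them must also be settled. I would run an $\omega$-length iteration of the clause~(2) construction. At step $k$, take the world $M_k$ and apply clause~(2) to the language expanded by constants for all elements of $M_k$; this is still set-sized. That yields $M_{k+1}$ together with an arrow from $M_k$, and all assertions with parameters from $M_k$ become settled in the sense above. Then take a covering cocone $N$ of $\langle M_k\rangle_{k<\omega}$. Every parameter tuple $\bar c$ from $N$ is $g_k(\bar a)$ for some $k$ and some $\bar a$ in $M_k$, using coherence to collect a finite tuple into a single stage. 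The assertion $\vartheta[\bar a]$ was settled at $M_{k+1}$. Coherence of the cocone ensures that the transported parameters agree, $g_{k+1}f_{k,k+1}(\bar a)=g_k(\bar a)=\bar c$, so the settled status transfers to $N$ at $\bar c$. This gives $\axiomf{5}$ with parameters throughout $\Cone(N)$. The delicate point is exactly this: covering is what makes every parameter of $N$ originate from some earlier stage, and commutativity is what makes the settled status attach to the correct elements.
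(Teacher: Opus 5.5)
Your proposal is correct and follows essentially the same route as the paper. Clauses (1) and (2) are handled by a fusion construction. For clause (3) you iterate clause (2) $\omega$ times in the language with constants for the current world, then take a covering cocone whose commutativity transfers the settled status of $\necessary\vartheta$ to every parameter tuple of $N$.

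The only differences are cosmetic. You read the property $\possible\necessary\vartheta\to\necessary\vartheta$ directly off your construction, whereas the paper derives it from \SFive{} applied to $\necessary\psi$ together with \axiomf{4}. You also spell out the coherence step that gathers a finite tuple into a single stage $M_k$, which the paper leaves implicit.
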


\begin{proof}
Clauses~\textup{(1)} and~\textup{(2)} are the standard fusion construction. One
enumerates the relevant sentences and, at each stage, forces a possible
necessary sentence to become necessary; a weak upper bound or cocone for the
resulting chain then satisfies $\possible\necessary\varphi\to\necessary\varphi$ for the chosen
language.

For clause~\textup{(3)}, fix a set-sized language $\mathcal L$ of modal
formulas.  We use a two-level construction.  First fix a world $W$ and a
parameter set $A\subseteq W$.  Expand the language by constants
for the elements of $A$ and apply clause~\textup{(2)} to the set of closed
formulas $\necessary\psi[\bar a]$, where $\psi\in\mathcal L$ and
$\bar a\in A^{<\omega}$.  This gives an arrow $e:W\to V$ such that, for every
such $\psi$ and $\bar a$,
\[
V\satisfies
\possible\necessary\psi[e(\bar a)]\to\necessary\psi[e(\bar a)].
\]
Indeed, clause~\textup{(2)} gives the implication with
$\possible\necessary\necessary\psi[e(\bar a)]$ on the left, and
$\axiomf{4}$ gives
$\possible\necessary\psi[e(\bar a)]\to
\possible\necessary\necessary\psi[e(\bar a)]$.

Starting from the given world $W_0$, iterate this inner closure step countably
many times.  Thus, at stage $n$, apply the preceding paragraph with
$A=W_n$ and obtain an arrow $f_{n,n+1}:W_n\to W_{n+1}$ such that
\[
W_{n+1}\satisfies
\possible\necessary\psi[f_{n,n+1}(\bar a)]\to
\necessary\psi[f_{n,n+1}(\bar a)]
\tag{$\dagger$}
\]
for every $\psi\in\mathcal L$ and every finite tuple $\bar a$ from $W_n$.
Let $(g_n:W_n\to W_\omega)_{n<\omega}$ be a covering cocone for the resulting
countable chain.

We claim that $W_\omega$ validates the $\axiomf{5}$ scheme with parameters from
$W_\omega$.  Let $\bar b$ be a finite tuple from $W_\omega$.  By covering, choose
$n<\omega$ and a tuple $\bar a$ from $W_n$ such that $g_n(\bar a)=\bar b$, and set
$\bar a'=f_{n,n+1}(\bar a)$.  By commutativity of the cocone,
$g_{n+1}(\bar a')=\bar b$.  Suppose
$W_\omega\satisfies\possible\necessary\psi[\bar b]$, witnessed by an arrow
$h:W_\omega\to E$ with $E\satisfies\necessary\psi[h(\bar b)]$.  Then
$h\circ g_{n+1}:W_{n+1}\to E$ witnesses
$W_{n+1}\satisfies\possible\necessary\psi[\bar a']$.  By $(\dagger)$,
$W_{n+1}\satisfies\necessary\psi[\bar a']$.  If $r:W_\omega\to E'$ is any further
extension of $W_\omega$, then $r\circ g_{n+1}:W_{n+1}\to E'$ is an extension of
$W_{n+1}$, so $E'\satisfies\psi[r(g_{n+1}(\bar a'))]=\psi[r(\bar b)]$.
Thus $W_\omega\satisfies\necessary\psi[\bar b]$.

The same argument gives the scheme throughout the cone above $W_\omega$ for
parameters transported from $W_\omega$.  If $s:W_\omega\to U$ and
$U\satisfies\possible\necessary\psi[s(\bar b)]$, then already
$W_\omega\satisfies\possible\necessary\psi[\bar b]$, hence
$W_\omega\satisfies\necessary\psi[\bar b]$, and consequently
$U\satisfies\necessary\psi[s(\bar b)]$.  Therefore the normal theory
$\SFive$ is valid at $W_\omega$ in the parameter language.
\end{proof}

\par\smallskip\noindent

We also use the standard connection, from modal model theory, between
existential closedness and the maximality principle. A model $M$ of a theory
$T$ is \emph{existentially closed} if every existential formula with parameters
from $M$ that is realized in an extension model of $T$ is already realized in
$M$. When a category of models and embeddings admits modality elimination, a
world validates $\SFive$ in its full modal language with parameters
allowed exactly when it is existentially closed among the models of the
underlying theory; see \cite{HW}.

\section{Modality elimination for linear orders}

Let me begin with modality elimination for the two categories in which it holds:
the category of linear orders and order-embeddings, and the category of linear
orders and monotone maps.  Here modality elimination means that every modal
formula is equivalent, in the relevant category, to a modality-free formula.
Throughout this section, linear orders are allowed to be empty.

An \emph{ordered partition} of a finite set $X$ is a partition of $X$ into
blocks together with a linear order of those blocks.
If $P=(B_0<\cdots <B_{r-1})$ is an ordered partition of
$\{0,\dots,n-1\}$, we regard $P$ as the formula
\[
P(\bar x)\;:=\;
\bigwedge_{m<r}\ \bigwedge_{\substack{i,j<n\\ i,j\in B_m}} x_i=x_j
\ \wedge\ \bigwedge_{m<m'<r}\ \bigwedge_{\substack{i\in B_m\\ j\in B_{m'}}}x_i<x_j.
\]
Ordered partitions record exactly the quantifier-free information about a
finite tuple in a linear order: which coordinates are equal, and in what
left-to-right order the equality classes occur. The contrast between embeddings
and monotone maps is already visible here. Under embeddings, the equality
pattern of a tuple is preserved; under monotone maps, adjacent blocks may
collapse. For example, in any nonempty order the formula $\possible(x=y)$ is
true under monotone maps, since a constant map collapses $x$ and $y$, but under
embeddings it is equivalent to $x=y$. The next figure records several ordered
partitions of a $4$-element set. The dots are the variables, the colored regions
indicate equality classes, and the left-to-right order of the regions is the
order in which the corresponding values must appear in the ambient linear
order.

\begin{figure}[H]
\begin{equation*}
\begin{tikzpicture}[scale=.6,every node/.style={transform shape},
        Line/.style = {
                line width=4mm,
                line cap=round
            },
        Area/.style = {
                line width=5mm,
                fill,
                line cap=round,
                rounded corners=0.5mm
            }
    ]
    \newcommand{\FourPoints}{
        \foreach \k in {1,...,4}{
            \coordinate (N\k) at (0.8*\k,0);
            \draw[fill] (N\k) circle (1mm);
        }
    }

    \begin{scope}
        \FourPoints
    \end{scope}

    \begin{scope}[shift={(4,0)}]
        \FourPoints
        \begin{scope}[on background layer]
            \draw[Area,SalmonSunrise] (N1) -- (N2);
        \end{scope}
    \end{scope}

    \begin{scope}[shift={(8,0)}]
        \FourPoints
        \begin{scope}[on background layer]
            \draw[Area,MintBreeze] (N2) -- (N3);
        \end{scope}
    \end{scope}

    \begin{scope}[shift={(12,0)}]
        \FourPoints
        \begin{scope}[on background layer]
            \begin{scope}[blend group=multiply]
                \draw[Area,LemonChiffon] (N1) -- (N2);
                \draw[Area,SkyBlueDream] (N3) -- (N4);
            \end{scope}
        \end{scope}
    \end{scope}

    \begin{scope}[shift={(16,0)}]
        \FourPoints
        \begin{scope}[on background layer]
            \draw[Area,LavenderHaze] (N1) -- (N4);
        \end{scope}
    \end{scope}
\end{tikzpicture}
\end{equation*}
\caption{Examples of ordered partitions of a $4$-element set as satisfiable
patterns of equalities and strict inequalities in a linear order: dots are
variables, monochromatic regions indicate equalities, and the left-to-right
order of regions encodes the required $<$-relations.}
\label{fig:ordered-partitions}
\end{figure}

There is also a natural coarseness order on ordered partitions.  Given ordered
partitions $P$ and $Q$, we write $P\preccurlyeq Q$ if $Q$ is obtained from $P$
by merging adjacent blocks.  Thus $Q$ remembers less equality-and-order
information than $P$.  Figure~\ref{fig:ordered-partition-coarseness} illustrates
this relation.

\begin{figure}[H]
\begin{equation*}
\begin{tikzpicture}[scale=.6,every node/.style={transform shape},
        Line/.style = {
                line width=4mm,
                line cap=round
            },
        Area/.style = {
                line width=5mm,
                fill,
                line cap=round,
                rounded corners=0.5mm
            }
    ]

    \newcommand{\RowPoints}{
        \foreach \k in {1,...,4}{
            \coordinate (N\k) at (0.8*\k,0);
            \draw[fill] (N\k) circle (1mm);
        }
    }

    \begin{scope}
        \RowPoints
        \begin{scope}[on background layer]
            \draw[Area,SalmonSunrise] (N1) -- (N2);
        \end{scope}
    \end{scope}

    \begin{scope}[shift={(4.2,0)},scale=1.66666666667]
        \node at (0,0) {$\preccurlyeq$};
    \end{scope}

    \begin{scope}[shift={(4.8,0)}]
        \RowPoints
        \begin{scope}[on background layer]
            \draw[Area,MintBreeze, line width=5.5mm] (N1) -- (N3);
        \end{scope}
    \end{scope}

    \begin{scope}[shift={(9.2,0)}, scale=1.66666666667]
        \node at (0,0) {$\Iff$};
    \end{scope}

    \begin{scope}[shift={(9.8,0)}]
        \RowPoints
        \begin{scope}[on background layer]
            \draw[Area,MintBreeze, line width=6mm] (N1) -- (N3);
            \draw[Area,SalmonSunrise] (N1) -- (N2);
        \end{scope}
    \end{scope}

\end{tikzpicture}
\end{equation*}
\caption{Coarseness for ordered partitions: $P\preccurlyeq Q$ if and only if
$Q$ is obtained by merging adjacent blocks of $P$.}
\label{fig:ordered-partition-coarseness}
\end{figure}

This coarseness relation is the reason ordered partitions are useful for
monotone maps.  In the category of linear orders and monotone maps, and likewise
for condensations, $P\preccurlyeq Q$ says exactly that a tuple realizing $P$ can
be sent by an accessibility mapping to a tuple realizing $Q$; the map may
identify adjacent equality blocks, but it cannot reverse their order.  In the
embedding category no such merging is possible, and an ordered partition is
preserved exactly.

Every quantifier-free $\Lord$-formula is therefore controlled by finitely many
ordered partitions.  We record this elementary reduction separately because it is
the point at which the later modality-elimination proofs become finite.

\begin{lemma}
\label{lem:qf-ordered-partitions}
Every satisfiable quantifier-free $\Lord$-formula is equivalent over the
theory of linear orders to a finite disjunction of ordered partitions of its
free variables.
\end{lemma}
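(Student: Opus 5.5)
Let $\varphi(x_0,\dots,x_{n-1})$ be a satisfiable quantifier-free $\Lord$-formula. The idea is that an ordered partition of $\{0,\dots,n-1\}$ is a complete quantifier-free type for an $n$-tuple in a linear order, so $\varphi$ is the disjunction of the complete types that imply it.

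First I will show that in any linear order $L$, every tuple $\bar a\in L^n$ satisfies exactly one ordered partition. Define $i\sim j$ iff $a_i=a_j$, and order the $\sim$-classes by the order of their values in $L$. This gives an ordered partition $P_{\bar a}$ with $L\satisfies P_{\bar a}(\bar a)$. Uniqueness follows from linearity: two distinct ordered partitions of the same index set must disagree on some pair $i,j$. One will demand $x_i=x_j$ where the other demands $x_i<x_j$ or $x_j<x_i$, or the two will demand opposite strict inequalities. In either case the conjunction is inconsistent over the axioms of linear order, since $<$ is irreflexive and asymmetric. So the ordered partitions $P$ of $\{0,\dots,n-1\}$ are finitely many, pairwise inconsistent, and their disjunction is valid in every linear order.

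Second I will show that each $P$ decides every atomic formula. If $L\satisfies P(\bar a)$, then the truth values of $a_i=a_j$ and $a_i\le a_j$ can be read off from whether $i,j$ lie in the same block or in which order their blocks occur. Hence all of them are determined by $P$ alone, and by induction on Boolean combinations so is the truth of $\varphi(\bar a)$. Thus for each $P$, either $P\to\varphi$ or $P\to\neg\varphi$ holds over the theory of linear orders.

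Finally, set $\mathcal P_\varphi=\{P: P\to\varphi\}$. By the first step every tuple satisfies some $P$, so $\varphi$ is equivalent to $\bigvee\mathcal P_\varphi$. This family is nonempty because $\varphi$ is satisfiable: any witnessing tuple's partition $P_{\bar a}$ lies in $\mathcal P_\varphi$ by the second step. The satisfiability hypothesis is exactly what rules out the empty disjunction $\bot$. Each $P$ is itself satisfiable, for instance in a finite chain with the right number of points.

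None of these steps is a real obstacle. The only point needing care is the exhaustiveness argument in the first step, which is where linearity (totality of $\le$) is used; everything else is a routine bookkeeping induction.
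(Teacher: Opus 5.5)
Your proof is correct and is essentially the paper's argument. The paper's one-line proof (put the formula in disjunctive normal form and complete each satisfiable conjunction to a full order-pattern) relies on exactly the facts you verify: ordered partitions are pairwise inconsistent, jointly exhaustive, and decide every atomic formula, with satisfiability ruling out the empty disjunction. You skip the DNF step by taking the disjunction of all full patterns that imply $\varphi$, and in doing so you make explicit the details the paper leaves to the reader.
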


\begin{proof}
Put the formula in disjunctive normal form and complete each satisfiable
conjunction of atomic and negated atomic formulas to a full order-pattern.
\end{proof}

\par\smallskip\noindent

We next name the intervals whose sizes control the embedding case.  If a tuple
is strictly increasing, then any embedding extension can add points only into
the gaps before the first coordinate, between consecutive coordinates, and after
the last coordinate.  The formulas below merely express finite upper and lower
bounds on those gaps.

Fix $k\in\omega$.
When $k=0$, let $I_0(\bar x)$ denote the whole order.
When $k>0$ and $x_0<\cdots<x_{k-1}$, let
\[
\begin{aligned}
I_0(\bar x)&=(-\infty,x_0),\\
I_j(\bar x)&=(x_{j-1},x_j)\qquad(1\le j\le k-1),\\
I_k(\bar x)&=(x_{k-1},\infty).
\end{aligned}
\]
If $\rho(z,\bar x)$ is any formula defining one of these intervals and
$n\in\omega$, write $\#\rho(\bar x)\ge n$ for the first-order formula
\[
\exists z_0,\dots,z_{n-1}\left(
\bigwedge_{i<n}\rho(z_i,\bar x)\ \wedge\
\bigwedge_{i<j<n} z_i\neq z_j
\right),
\]
and write $\#\rho(\bar x)\le n$ for
\[
\forall z_0,\dots,z_n\left(
\bigwedge_{i\le n}\rho(z_i,\bar x)\ \rightarrow\
\bigvee_{i<j\le n} z_i=z_j
\right).
\]
Finally, $\#\rho(\bar x)=n$ abbreviates the conjunction of
$\#\rho(\bar x)\ge n$ and $\#\rho(\bar x)\le n$.
For the consecutive intervals above we write these formulas as
$\#I_j(\bar x)\ge n$, $\#I_j(\bar x)\le n$, and $\#I_j(\bar x)=n$.
\par\smallskip

Let $\LOemb$ denote the Kripke category of linear orders and order-embeddings.
The point of the embedding case is that an embedding cannot change the
order-pattern of the named tuple.  All it can do is insert new points into the
successive intervals cut out by that tuple.  Thus the only data which the
possibility operator can add are upper bounds on how many points those intervals
can contain.

\begin{theorem}[Modality elimination for embeddings]
\label{thm:elimination-emb}
The category $\LOemb$ admits modality elimination.
More precisely, for every modal $\Lord$-formula $\theta(\bar x)$, the formula
$\possible\theta(\bar x)$ is equivalent in $\LOemb$ to a modality-free
$\Lord$-formula.
After fixing an ordered partition of the free variables, one may choose this
equivalent to be a finite disjunction of upper bounds on the sizes of the
consecutive intervals determined by that ordered partition.
\end{theorem}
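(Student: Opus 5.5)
The proof goes by induction on the modal complexity of $\theta$, with the induction hypothesis that every modal formula is equivalent in $\LOemb$ to a first-order $\Lord$-formula. Boolean connectives and quantifiers pass through trivially, so the only real case is $\possible\theta(\bar x)$ with $\theta$ already modality-free. Since embeddings preserve and reflect ordered partitions, I will split along the finitely many ordered partitions $P$ of the free variables (Lemma~\ref{lem:qf-ordered-partitions}):
\[
\possible\theta\;\leftrightarrow\;\bigvee_P\bigl(P(\bar x)\wedge\possible\theta\bigr).
\]
Within a fixed $P$, I pass to the strictly increasing tuple $y_0<\dots<y_{k-1}$ of block representatives. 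Then $\theta$ is equivalent, under $P$, to a first-order formula $\theta_P(\bar y)$.

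\textbf{Key step: the isomorphism type of $(W,\bar y)$.} This type is determined by the order types of the $k+1$ intervals $I_j(\bar y)$. The structure $(W,\bar y)$ embeds over $\bar y$ into $(V,\bar y')$ exactly when each $I_j(\bar y)$ embeds into $I_j(\bar y')$. The plan is to show
\[
W\satisfies\possible\theta_P[\bar y]
\quad\text{iff}\quad
\text{there is }(n_0,\dots,n_k)\in(\omega\cup\{\infty\})^{k+1}\text{ with }\#I_j(\bar y)\le n_j\text{ for all }j,
\]
where the tuple $(n_0,\dots,n_k)$ ranges over a set $S$ such that, for every choice of intervals $J_j$ with $\lvert J_j\rvert\le n_j$, some extension of $(W,\bar y)$ satisfies $\theta_P$.

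\textbf{Making $S$ finite.} This is the main obstacle, and I would handle it with Ehrenfeucht--Fra\"iss\'e games. Let $q$ be the quantifier rank of $\theta_P$. Then $\theta_P(\bar y)$ depends only on the $q$-equivalence classes of the interval orders $I_j$: the standard composition (sum) lemma for ordered sums of linear orders gives this. For the possibility, the key observation is that every interval can be enlarged to any larger order. In particular, any infinite $I_j$ can be enlarged to, say, a copy of $\omega^\omega\cdot\eta$ or any large order. Since any finite $I_j$ of size $n$ embeds into every order of size at least $n$, whether $\possible\theta_P$ holds depends only on the tuple of sizes $\min(\#I_j,\ \omega)$. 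For infinite intervals one can reach every $q$-class realizable by a superorder, and all infinite orders embed into a common target, e.g.\ a sufficiently saturated DLO-plus-anything. The set $T\subseteq(\omega+1)^{k+1}$ of size profiles at which $\possible\theta_P$ holds is therefore downward closed in the product order, since smaller profiles embed into larger ones. By Dickson's lemma its complement, which is upward closed, is generated by finitely many minimal elements. Equivalently, $T$ is a finite union of downward boxes $\prod_j[0,n_j]$ with $n_j\in\omega\cup\{\infty\}$. Each box is the conjunction of the first-order formulas $\#I_j(\bar y)\le n_j$, omitting the $\infty$ coordinates.

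\textbf{Assembly.} Taking the disjunction over these boxes, conjoined with $P$, gives the required modality-free equivalent. This also produces the advertised form: a finite disjunction of upper bounds on the consecutive interval sizes.
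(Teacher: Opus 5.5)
There is a genuine gap, at the step ``By Dickson's lemma its complement \dots\ is generated by finitely many minimal elements. Equivalently, $T$ is a finite union of downward boxes.'' Your reduction to the set $T\subseteq(\omega+1)^{k+1}$ of truncated size profiles is fine, but this equivalence is false in $(\omega+1)^{k+1}$. A downward-closed set whose complement has finitely many minimal elements need not be a finite union of boxes $\prod_j[0,n_j]$ with $n_j\in\omega\cup\{\infty\}$. Suppose some minimal element $m$ of the complement has $m_j=\omega$. Then it contributes the condition $x_j<\omega$, that is, the condition that $I_j(\bar y)$ is finite. That is neither a box condition nor first-order. For example, $T=\{x\in\omega+1:x<\omega\}$ is downward closed, and its complement is generated by the single minimal element $\omega$. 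Yet $T$ is not a finite union of sets $[0,n]$. Downward closure and the well-quasi-order property are purely combinatorial and cannot exclude this, so as written your argument does not yet produce a modality-free equivalent.

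What is missing is the model-theoretic input that $T$ is closed under suprema of nondecreasing sequences. This is the paper's sequential-closure claim, and it comes from compactness applied to the modality-free $\theta_P$. Given your reduction, $x\in T$ if and only if the first-order theory $\{y_0<\dots<y_{k-1},\ \theta_P(\bar y)\}\cup\{\#I_j(\bar y)\ge n: j\le k,\ n<\omega,\ n\le x_j\}$ is consistent. Any finite fragment involves only finitely many lower bounds.

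Now suppose a minimal $m$ in the complement had $m_j=\omega$. The vectors $m[j\mapsto n]$ for $n<\omega$ would then lie in $T$. By downward closure, so would every vector with finite coordinates below $m$. Compactness would then put $m$ in $T$, a contradiction. Hence all minimal elements $m^{(1)},\dots,m^{(r)}$ of the complement are finite vectors. Then $T=\bigcap_i\bigcup_j\{x:x_j<m^{(i)}_j\}$ really is a finite union of boxes. The paper does the same bookkeeping dually, via the finitely many maximal elements of the set of realised profiles.

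Relatedly, your justification that $\possible\theta_P$ depends only on $\min(\#I_j,\omega)$ should not pass through a common DLO-like target, since that would change the $q$-class of the interval. The right move is the paper's converse direction. Embed the atomic diagram of the new world into an elementary extension of the witnessing world $V$, using the fact that every infinite interval of $V$ contains all finite chains.
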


\begin{proof}
We argue by induction on the complexity of modal formulas.  Atomic
formulas are already modality-free, and Boolean connectives and first-order
quantifiers preserve modality-freeness once it has been obtained for the
immediate subformulas.  Thus the only nontrivial case is a formula of the form
$\possible\psi(\bar x)$; all of the work below is devoted to understanding what
an embedding extension can add around the tuple $\bar x$.

By the induction hypothesis, every modal operator occurring strictly inside
$\psi$ can be eliminated.
So, replacing $\psi$ by an equivalent formula if necessary, we may assume that
$\psi(\bar x)$ is already a modality-free $\Lord$-formula.
Now distribute over the finitely many ordered partitions of the free
variables:
\[
\possible\psi(\bar x)\ \leftrightarrow\
\bigvee_P \possible\bigl(P(\bar x)\wedge\psi(\bar x)\bigr).
\]
If $P$ identifies two coordinates, merge them; after reindexing the remaining
free variables, it is enough to analyze either the sentence case $k=0$, or a
formula of the form
\[
(x_0<\cdots<x_{k-1})\wedge \possible\psi(\bar x)
\qquad (k>0).
\]
The argument below treats both cases simultaneously, with the interval-size notation just introduced, interpreting $I_0(\bar x)$ as the
whole order when $k=0$.

Let $E=\omega\cup\{\infty\}$, ordered by the usual order on $\omega$ together
with $n<\infty$ for all $n<\omega$.
For a linear order $M$ and an increasing $k$-tuple $\bar a$ in $M$
(or the empty tuple when $k=0$), let
$v(M,\bar a)\in E^{k+1}$ be the vector of the sizes of the consecutive
intervals $I_0(\bar a),\dots,I_k(\bar a)$.
Define
\[
S=\bigl\{v(M,\bar a): M \text{ is a linear order and }
M\satisfies (a_0<\cdots<a_{k-1})\wedge \psi(\bar a)\bigr\},
\]
where for $k=0$ the displayed order-pattern is interpreted as the empty
conjunction.
If $S=\varnothing$, then
$(x_0<\cdots<x_{k-1})\wedge\possible\psi(\bar x)$ is equivalent to false, so
there is nothing to prove.

We claim that $S$ is closed under coordinatewise suprema of nondecreasing
sequences.
Suppose
$s^0\le s^1\le\cdots$
is a nondecreasing sequence from $S$, and let
$b=\sup_i s^i$ coordinatewise.
Consider the first-order theory consisting of the theory of linear orders, the
formula
$(x_0<\cdots<x_{k-1})\wedge\psi(\bar x)$,
the exact cardinality statements
$\#I_j(\bar x)=b_j$
for each coordinate with $b_j<\infty$, and the lower bounds
$\#I_j(\bar x)\ge t$
for every $t<\omega$ whenever $b_j=\infty$.
Because $\psi$ is now modality-free, this is an ordinary first-order theory.
Any finite fragment mentions only finitely many lower bounds.
For each coordinate with finite supremum, the corresponding nondecreasing
sequence eventually stabilizes at that value; for each coordinate with
supremum $\infty$, some sufficiently large $s^i$ meets all the finitely many
required lower bounds.
Hence every finite fragment is realized by one of the tuples witnessing some
$s^i\in S$.
By compactness, the whole theory is satisfiable, so $b\in S$.

The poset $E$ is a well-quasi-order, and therefore so is the finite product
$E^{k+1}$.
We next show that every element of $S$ lies below
some maximal element of $S$.
Fix $s\in S$, and enumerate $E^{k+1}$ as
$e_0,e_1,e_2,\ldots$.
Starting with $s^0=s$, define a nondecreasing sequence in $S$ as follows.
Given $s^n$, if there is some $u\in S$ with $s^n\le u$ and $e_n\le u$, let
$s^{n+1}$ be the first such $u$ in the fixed enumeration; otherwise let
$s^{n+1}=s^n$.
Let $b=\sup_n s^n$ coordinatewise.
By the sequential-closure claim, $b\in S$.

We claim that $b$ is maximal in $S$.
Suppose $c\in S$ and $b\le c$.
Write $c=e_m$.
At stage $m$, the element $c$ itself witnesses that there is some $u\in S$ with
$s^m\le u$ and $e_m\le u$.
Hence $s^{m+1}$ was chosen so that $c=e_m\le s^{m+1}$.
Since $s^{m+1}\le b$, we have $c\le b$.
Together with $b\le c$, this gives $c=b$.
Thus $b$ is maximal.
Therefore every element of $S$ lies below a maximal element of $S$.
The set of maximal elements of $S$ is an antichain in a well-quasi-order, so
it is finite.
Write
\[
\max(S)=\bigl\{b^{(1)},\dots,b^{(r)}\bigr\}.
\]
For each $1\le i\le r$, define
\[
\beta_i(\bar x):=
\bigwedge_{\substack{j\le k\\ b^{(i)}_j<\infty}}
\#I_j(\bar x)\le b^{(i)}_j.
\]
We claim that in $\LOemb$,
\[
(x_0<\cdots<x_{k-1})\wedge \possible\psi(\bar x)
\ \leftrightarrow\
(x_0<\cdots<x_{k-1})\wedge
\bigl(\beta_1(\bar x)\vee\cdots\vee\beta_r(\bar x)\bigr),
\]
again interpreting the displayed order-pattern as vacuous when $k=0$.

For the forward direction, suppose
$W\satisfies (a_0<\cdots<a_{k-1})\wedge\possible\psi(\bar a)$.
Choose an embedding
$e:W\emb M$
with
$M\satisfies (e(a_0)<\cdots<e(a_{k-1}))\wedge\psi(e(\bar a))$.
Embeddings preserve order and are injective, so they cannot decrease any of
the consecutive interval sizes.
Hence
$v(W,\bar a)\le v(M,e(\bar a))$
coordinatewise.
Since
$v(M,e(\bar a))\in S$,
it lies below some maximal element
$b^{(i)}$,
and therefore
$W\satisfies \beta_i(\bar a)$.

For the converse, suppose
$W\satisfies a_0<\cdots<a_{k-1}$
and
$v(W,\bar a)\le b^{(i)}$
for some $i$.
Choose a linear order $M$ and a tuple
$\bar c=(c_0,\dots,c_{k-1})$
(or the empty tuple if $k=0$) such that
\[
M\satisfies (c_0<\cdots<c_{k-1})\wedge \psi(\bar c)
\]
and
$v(M,\bar c)=b^{(i)}$.
Work in a language with:
constant symbols naming all elements of $M\setminus\{\bar c\}$,
constant symbols naming all elements of $W\setminus\{\bar a\}$,
and shared constants
$d_0,\dots,d_{k-1}$
interpreted as
$c_0,\dots,c_{k-1}$
in $M$ and as
$a_0,\dots,a_{k-1}$
in $W$.
Let $T'$ consist of the elementary diagram of the expansion of $M$ by the
shared constants $d_0,\dots,d_{k-1}$ together with the full atomic diagram of the
expansion of $W$ by those same shared constants (including negated atomic
formulas, and in particular all distinctness statements).

Any finite fragment of $T'$ mentions only finitely many constants from
$W\setminus\{\bar a\}$.
Each such constant lies in one of the consecutive intervals determined by
$\bar a$.
If the corresponding coordinate of $b^{(i)}$ is finite, then that interval of
$M$ has exactly $b^{(i)}_j$ points, so the finitely many constants from $W$
can be interpreted injectively there because
$v(W,\bar a)\le b^{(i)}$.
If the coordinate is $\infty$, then the corresponding interval of $M$ is
infinite, and any finite linear order embeds into it.
Thus every finite fragment of $T'$ is satisfiable.
By compactness there is a model $M^*$ of $T'$.
Because $T'$ contains the elementary diagram of $(M,\bar c)$,
the reduct of $M^*$ to the symbols coming from $M$ is an elementary extension
of $M$, and so
$M^*\satisfies (d_0<\cdots<d_{k-1})\wedge\psi(\bar d)$.
Because $T'$ also contains the full atomic diagram of $(W,\bar a)$, the
interpretation of the constants from $W$ yields an embedding of $W$ into
$M^*$ sending $\bar a$ to $\bar d$.
Hence
$W\satisfies \possible\psi(\bar a)$.

This proves the displayed equivalence for formulas of the form
$(x_0<\cdots<x_{k-1})\wedge\possible\psi(\bar x)$, and therefore completes the
induction.
\end{proof}

\par
We next turn from embeddings to monotone maps.  Let $\LOmon$ denote the Kripke category of linear orders and monotone maps.

\begin{theorem}[Modality elimination for monotone maps]
\label{thm:elimination-mon}
The category $\LOmon$ admits modality elimination.
More precisely, for every modal $\Lord$-formula $\theta(\bar x)$, the formula
$\possible\theta(\bar x)$ is equivalent in $\LOmon$ to a modality-free
$\Lord$-formula.
For a nonempty tuple of free variables, after decomposing into ordered
partitions, each satisfiable target pattern $Q$ contributes precisely the finite
disjunction of all source patterns $P$ with $P\preccurlyeq Q$.
In the special case of no free variables, the additional normal form
``the order is empty'' may also occur.
\end{theorem}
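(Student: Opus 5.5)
The plan mirrors the embedding proof: induct on modal complexity, so that the only nontrivial case is $\possible\psi(\bar x)$ with $\psi$ already modality-free. The key difference is that a monotone map can send any nonempty order onto a chain realizing any pattern reachable by merging. As in Theorem~\ref{thm:elimination-emb}, I first distribute over source ordered partitions $P$ of the free variables. I then also distribute $\psi$ over target partitions, writing $\psi\leftrightarrow\bigvee_Q (Q\wedge\psi)$, and keep only those $Q$ for which $Q(\bar y)\wedge\psi(\bar y)$ is satisfiable in some linear order. The claim to prove is
\[
\possible\psi(\bar x)\ \leftrightarrow\ \bigvee_{Q\text{ satisfiable with }\psi}\ \bigvee_{P\preccurlyeq Q} P(\bar x)
\]
for nonempty $\bar x$, with the extra disjunct $\neg\exists z\,(z=z)$ added in the sentence case when $\psi$ holds in the empty order.

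For the forward direction, suppose $f:W\to V$ is monotone and $V\satisfies\psi(f(\bar a))$. Let $P$ be the pattern of $\bar a$ in $W$ and $Q$ the pattern of $f(\bar a)$ in $V$. Since $f$ is weakly order-preserving, the image of the blocks of $P$ is nondecreasing, so $Q$ arises from $P$ by merging adjacent blocks, that is, $P\preccurlyeq Q$. Moreover $Q\wedge\psi$ is satisfied in $V$, so it is satisfiable.

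For the converse, suppose $W\satisfies P(\bar a)$, $P\preccurlyeq Q$, and $M\satisfies Q(\bar c)\wedge\psi(\bar c)$. I construct a monotone $f:W\to M$ with $f(\bar a)=\bar c$ directly, with no compactness needed. Let the blocks of $P$ have values $a^{(0)}<\dots<a^{(r-1)}$ in $W$, and let $g(m)\in M$ be the $\bar c$-value of the $Q$-block containing the $m$-th $P$-block; $g$ is nondecreasing. Define $f(w)=g(m)$ where $m$ is the largest index with $a^{(m)}\le w$, and $f(w)=g(0)$ if $w<a^{(0)}$. This map is monotone and sends $\bar a$ to $\bar c$. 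Since $\psi$ is modality-free, $M\satisfies\psi(f(\bar a))$, which gives $W\satisfies\possible\psi(\bar a)$.

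The sentence case needs care, and this is where I expect the main subtlety. If $W$ is nonempty, a constant map sends $W$ to any nonempty model of $\psi$, so $\possible\psi$ holds iff $\psi$ has a nonempty model. The empty order, however, only maps to orders that satisfy $\psi$ somewhere; every order is a target of the empty map. So $\possible\psi$ is equivalent to the disjunction of ``$\top$ if $\psi$ has a nonempty model'' and ``$W$ is empty, if $\psi$ has any model''. This is the extra empty-order alternative in the statement. For nonempty $\bar x$ the world is automatically nonempty, so no such disjunct appears.
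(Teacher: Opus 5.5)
Your proposal is correct and follows essentially the same route as the paper: induction down to $\possible\psi$ with $\psi$ modality-free, decomposition over target patterns $Q$, the forward direction by observing that monotone images only merge adjacent blocks, the converse by an explicit step-function monotone map onto a realization of $Q\wedge\psi$, and the same three-way sentence-case analysis (no model, only the empty model, a nonempty model). The differences are cosmetic: your map is constant on the left-closed pieces $[a^{(m)},a^{(m+1)})$, whereas the paper uses right-closed pieces $J_i$ ending at the last $P$-block of each $Q$-block; and your preliminary split over source patterns $P$ is not needed.
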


\begin{proof}
We again argue by induction on the complexity of modal formulas.
The Boolean connectives and first-order quantifiers are routine, so it
suffices to consider a formula of the form
$\possible\psi(\bar x)$.
By the induction hypothesis, every modal operator occurring strictly inside
$\psi$ can be eliminated.
So we may assume that $\psi(\bar x)$ is modality-free.

First suppose that $\bar x$ is empty.
If $\psi$ is unsatisfiable in linear orders, then $\possible\psi$ is false
everywhere.
If $\psi$ has a nonempty model $M$, then every world satisfies
$\possible\psi$: the empty order maps uniquely to $M$, and every nonempty
order maps to $M$ by a constant monotone map.
Finally, if $\psi$ is satisfiable but only in the empty order, then
$\possible\psi$ holds exactly at the empty world, because no nonempty order
admits a function into the empty order.
This gives the asserted sentential normal forms.

Now assume that $\bar x$ is nonempty.
Decompose $\psi$ over the finitely many ordered partitions $Q$ of the free
variables:
\[
\possible\psi(\bar x)\ \leftrightarrow\
\bigvee_Q \possible\bigl(Q(\bar x)\wedge\psi(\bar x)\bigr).
\]
It remains to analyze one target ordered partition $Q$.
Let the blocks of $Q$ be
\[
B_0<\cdots<B_{k-1}
\]
with $k\ge1$.
Let $\psi_Q(y_0,\dots,y_{k-1})$ be obtained from $\psi$ by replacing every
variable in the block $B_i$ by $y_i$.
If
\[
(y_0<\cdots<y_{k-1})\wedge\psi_Q(\bar y)
\]
is unsatisfiable over the theory of linear orders, where the displayed
strict-order condition is vacuous when $k=1$, then
$\possible(Q\wedge\psi)$ is false everywhere.
Suppose instead that it is satisfiable, and choose a linear order $M$ and
points
\[
 c_0<\cdots<c_{k-1}
\]
with
\[
M\satisfies\psi_Q(c_0,\dots,c_{k-1}).
\]
We claim that for every world $W$ and every tuple $\bar a\in W$,
\[
W\satisfies\possible\bigl(Q(\bar a)\wedge\psi(\bar a)\bigr)
\qquad\text{if and only if}\qquad
W\satisfies \bigvee_{P\preccurlyeq Q} P(\bar a),
\]
where $P$ ranges over the ordered partitions of the original free variables.

For the forward implication, let $f:W\to N$ be a monotone map such that
$N\satisfies Q(f(\bar a))\wedge\psi(f(\bar a))$.
Let $P$ be the ordered partition realized by $\bar a$ in $W$.
A monotone map can only merge adjacent blocks of the source pattern, never
reverse their order, and the image pattern is $Q$.
Thus $P\preccurlyeq Q$.

Conversely, suppose that $\bar a$ realizes an ordered partition
$P\preccurlyeq Q$.
Let the blocks of $P$ be
\[
A_0<\cdots<A_{\ell-1},
\]
and write $b_s$ for the common value in $W$ of the variables belonging to
$A_s$.
Since $Q$ is obtained from $P$ by merging adjacent blocks, for each
$i<k$ the block $B_i$ is the union of a nonempty consecutive interval of the
$A_s$'s.
Let $r(i)$ be the largest index $s$ such that $A_s\subseteq B_i$.
Define convex subsets of $W$ by
\[
\begin{aligned}
J_0&=\{z\in W:z\le b_{r(0)}\},\\
J_i&=\{z\in W:b_{r(i-1)}<z\le b_{r(i)}\}\qquad(1\le i<k),\\
J_k&=\{z\in W:b_{r(k-1)}<z\}.
\end{aligned}
\]
Now define $f:W\to M$ by sending every point of $J_i$ to $c_i$ for
$i<k$, and every point of the final tail $J_k$ to $c_{k-1}$.
The sets $J_0,\dots,J_k$ occur in increasing order, and
$c_0<\cdots<c_{k-1}$, so $f$ is monotone.
Moreover, if a variable lies in the block $B_i$ of $Q$, then its value in $W$
lies in $J_i$, and hence its image under $f$ is $c_i$.
Therefore
\[
M\satisfies Q(f(\bar a))\wedge\psi(f(\bar a)),
\]
and so $W\satisfies\possible(Q\wedge\psi)(\bar a)$.

Thus every target partition contributes either nothing or the finite
disjunction of its refinements. Combining the finitely many target partitions
with the sentence case completes the induction.
\end{proof}

\section{The failure of modality elimination for condensations}

The preceding elimination theorems might suggest that the modal theory of
linear orders is always reducible to finite order-pattern information.
Condensations show that this is not so.  This is the first place where the
modal language sees a genuinely non-first-order property of linear orders.
A \emph{condensation} is a surjective monotone map between nonempty linear
orders.
We write $\LOcond$ for the Kripke category whose objects are the nonempty
linear orders and whose arrows are condensations.
A linear order is \emph{scattered} if it does not contain a copy of
$\Q$.

\begin{theorem}
\label{thm:scattered-modally-definable}
Scatteredness is definable in modal order theory for condensations.
More precisely, there is a modal sentence $\mathrm{Scatt}$ such that for every
nonempty linear order $L$,
\[
L\satisfies \mathrm{Scatt}
\qquad\text{if and only if}\qquad
L\text{ is scattered}.
\]
\end{theorem}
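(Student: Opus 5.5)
The plan is to take
\[
\mathrm{Scatt}\;:=\;\neg\possible\bigl(\exists x\,\exists y\,(x<y)\wedge\forall x\,\forall y\,(x<y\to\exists z\,(x<z\wedge z<y))\bigr),
\]
which says that $L$ has no condensation onto a dense linear order with at least two points. We prove the two directions separately. For ``scattered implies $\mathrm{Scatt}$'', the key step is that condensations cannot create copies of $\Q$. For the converse we exhibit a specific condensation, the Hausdorff scattered-condensation, whose image is dense and nontrivial.

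\emph{First direction.} Suppose $f:L\onto M$ is a condensation and $M$ contains a copy $D$ of $\Q$. Choose, for each $d\in D$, one point $s(d)\in f^{-1}(d)$; this uses surjectivity. Since $f$ is monotone and the fibres over distinct points are disjoint, $d<d'$ forces $s(d)<s(d')$. Indeed, $s(d)\ge s(d')$ would give $d=f(s(d))\ge f(s(d'))=d'$. Hence $s$ embeds $\Q$ into $L$, so condensation images of scattered orders are scattered. A dense linear order with two points $a<b$ contains a copy of $\Q$, obtained by a back-and-forth construction inside $(a,b)$. Consequently, if $L$ is scattered, no world accessible from $L$ satisfies the boxed-out sentence, and $L\satisfies\mathrm{Scatt}$.

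\emph{Second direction.} Suppose $L$ is non-scattered. Define $x\sim y$ if the closed interval between $x$ and $y$ is scattered. We verify the following facts in order.
\begin{enumerate}[label=\textup{(\roman*)}]
\item The relation $\sim$ is an equivalence relation with convex classes. Transitivity holds because $[x,z]\subseteq[x,y]\cup[y,z]$, and an ordered sum of two scattered orders is scattered: a copy of $\Q$ split into an initial and a final part has a part containing an interval, hence a copy of $\Q$.
\item Each class is scattered. If a class contained a copy of $\Q$, pick $q_1<q_2$ in that copy; then $[q_1,q_2]$ contains a copy of $\Q$, contradicting $q_1\sim q_2$.
\item The quotient $L/{\sim}$ is dense. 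If classes $a<b$ had no class strictly between them, then for $x\in a$ and $y\in b$ we would have $[x,y]\subseteq a+b$, which is scattered by (i) and (ii), so $x\sim y$, a contradiction.
\item The quotient has at least two points. Otherwise $L$ would be a single class, hence scattered by (ii).
\end{enumerate}
The quotient map $L\onto L/{\sim}$ is surjective and monotone because the classes are convex, so it is a condensation witnessing the possibility. Hence $L\not\satisfies\mathrm{Scatt}$.

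The main obstacle is step (iii), the density of the quotient, together with the supporting lemma that a finite ordered sum of scattered orders is scattered. I would prove that lemma by intersecting a copy of $\Q$ with the summands: since $\Q$ is partitioned into two convex pieces, one piece contains an open interval of $\Q$, which is again a copy of $\Q$. Everything else is routine verification against Definition~\ref{def:modal-semantics}, using that all worlds in $\LOcond$ are nonempty.
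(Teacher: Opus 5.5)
Your proof is correct, but it takes a different route from the paper's.

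\emph{The paper's route.} The paper's sentence $\mathrm{Dense}$ requires, besides density and at least two points, both a minimum and a maximum. It then cites Doets's characterization (a linear order is scattered iff it admits no nontrivial densely ordered condensation) as a black box for both directions. Because of the endpoint clause, the non-scattered case needs one extra step: the dense image $D$ is condensed further onto $\{q\}+(u,v)_D+\{p\}$.

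\emph{Your route.} You drop the endpoint clause, so that extra step disappears. You also prove Doets's lemma yourself. For the easy direction, you take a section of the condensation, so a monotone surjection cannot create a copy of $\Q$, and you note that a nontrivial dense order contains $\Q$. For the hard direction, you use the Hausdorff-style condensation $x\sim y$ iff the interval between $x$ and $y$ is scattered. Its density and nontriviality rest only on the fact that a two-block ordered sum of scattered orders is scattered. (A small point of terminology: a ``forth'' construction inside $(a,b)$ suffices to embed $\Q$; back-and-forth is not needed.)

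\emph{What each buys.} Your argument is self-contained and uses a slightly simpler sentence. The paper's argument is shorter, and it yields the stronger normal form that every non-scattered order condenses onto a dense order with both endpoints. The paper uses this later, in Lemma~\ref{lem:sigma-dials-and-buttons-nonscattered-condensation}, when it says the segment below the maximum is dense with a minimum and no maximum. With your sentence, that step would need either the paper's extra endpoint condensation or a minor adjustment, since the minimum is used only for convenience there.
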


\begin{proof}
Let $\mathrm{Dense}$ be the first-order sentence asserting that the order is
dense and has both a minimum and a maximum:
\[
\exists \ell\exists r\Bigl(\ell<r\wedge
\forall x\,(\ell\le x)\wedge\forall x\,(x\le r)
\wedge\forall x\forall y\bigl(x<y\to\exists z\,(x<z\wedge z<y)\bigr)\Bigr).
\]
Define
\[
\mathrm{Scatt}:=\neg\possible\mathrm{Dense}.
\]
We use Doets's characterization that, assuming choice, a linear order is
scattered if and only if it admits no nontrivial densely ordered condensation
\cite[Lemma~3.2.3]{Doets}.

Suppose first that $L$ is scattered.
If $L\satisfies\possible\mathrm{Dense}$, choose a condensation
$f:L\onto D$ with $D\satisfies\mathrm{Dense}$.
Then $D$ is nontrivially densely ordered, contrary to Doets's characterization.
Hence $L\satisfies\mathrm{Scatt}$.

Conversely, suppose that $L$ is not scattered.
By Doets's characterization there is a condensation $f:L\onto D$ such that
$D$ is nontrivially densely ordered.
Choose $u<v$ in $D$, let $I=(u,v)_D$, and let $q,p\notin D$ be new points.
Form the ordered sum
\[
D' = \{q\}+I+\{p\}.
\]
Let $\iota:I\emb D'$ be the inclusion of the middle summand, and define
$g:D\to D'$ by
\[
g(b)=
\begin{cases}
 q,&b\le u,\\
 \iota(b),&u<b<v,\\
 p,&v\le b.
\end{cases}
\]
Then $g$ is monotone and surjective, hence a condensation.
Since $I$ is dense without endpoints, the order $D'$ is dense, with minimum
$q$ and maximum $p$; therefore $D'\satisfies\mathrm{Dense}$.
Thus $g\circ f:L\onto D'$ witnesses $L\satisfies\possible\mathrm{Dense}$, and so
$L\not\satisfies\mathrm{Scatt}$.
\end{proof}

\begin{corollary}
\label{cor:scattered-below}
There is a modal formula $\mathrm{Scatt}_{<}(x)$ such that for every nonempty
linear order $L$ and every $a\in L$,
\[
L\satisfies \mathrm{Scatt}_{<}[a]
\qquad\text{if and only if}\qquad
(-\infty,a)_L\text{ is scattered}.
\]
\end{corollary}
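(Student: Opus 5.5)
We relativize the construction in the proof of Theorem~\ref{thm:scattered-modally-definable} to the initial segment below the parameter. The target sentence $\mathrm{Dense}$ is replaced by a formula $\mathrm{Dense}_{<}(x)$. It asserts that the open initial segment $(-\infty,x)$ is nonempty, densely ordered, has a minimum, and has a supremum-like right end. A simple choice is the following first-order formula:
\[
\mathrm{Dense}_{<}(x):=\exists \ell\,\Bigl(\ell<x\wedge\forall z\,(z<x\to \ell\le z)\wedge\forall y\forall z\bigl(y<z\le x\to\exists w\,(y<w<z)\bigr)\Bigr),
\]
which says that $[\ell,x]$ is a dense interval with minimum $\ell$ and maximum $x$, with $\ell$ least below $x$. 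We then set $\mathrm{Scatt}_{<}(x):=\neg\possible\mathrm{Dense}_{<}(x)$.

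For the forward direction, suppose $(-\infty,a)_L$ is scattered and there is a condensation $f:L\onto D$ with $D\satisfies\mathrm{Dense}_{<}[f(a)]$. Monotonicity and surjectivity mean that the restriction of $f$ to $(-\infty,a]_L$ maps onto $(-\infty,f(a)]_D=[\ell,f(a)]$. This restriction is therefore a condensation of $(-\infty,a]_L$ onto a nontrivial dense order. However, $(-\infty,a]_L$ is scattered, being a scattered order plus one point. This contradicts Doets's characterization \cite[Lemma~3.2.3]{Doets}. One check is needed here: points of $L$ above $a$ could also map to $f(a)$, but the restriction to $(-\infty,a]$ still surjects onto $[\ell,f(a)]$, since each point below $f(a)$ has a preimage below $a$, and $a$ itself hits $f(a)$.

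For the converse, suppose $(-\infty,a)_L$ is non-scattered. Apply Doets's characterization to $(-\infty,a)_L$ to get a condensation $f_0$ onto a nontrivial dense order $D_0$. Pick $u<v$ in $D_0$ and, as in the proof of Theorem~\ref{thm:scattered-modally-definable}, collapse $D_0$ onto $D'=\{q\}+(u,v)+\{p\}$. Extend this to all of $L$ by sending every point $\ge a$ to $p$. Composition and extension preserve monotonicity and surjectivity, so the result is a condensation $L\onto D'$. It sends $a$ to the maximum $p$ of $D'$, and $D'$ has minimum $q$ and is dense. Hence $D'\satisfies\mathrm{Dense}_{<}[p]$, giving $L\satisfies\possible\mathrm{Dense}_{<}[a]$ and so $L\not\satisfies\mathrm{Scatt}_{<}[a]$.

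The main obstacle is matching the formula exactly with what condensations can produce. The forward direction needs care so that the restriction argument genuinely yields a condensation of a scattered order onto a \emph{nontrivial} dense one. Nontriviality is guaranteed by $\ell<x$. We also must handle the case where $(-\infty,a)_L$ is empty, which is scattered. In that case $f(a)$ is the minimum of $D$, so no $\ell<f(a)$ exists and $\mathrm{Dense}_{<}$ fails, as required.
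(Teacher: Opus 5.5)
Your argument is correct, but it takes a different route from the paper.

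\textbf{The paper's route.} The paper defines $\mathrm{Scatt}_{<}(x):=\neg\possible\bigl(\mathrm{Max}(x)\wedge\neg\mathrm{Scatt}\bigr)$, which nests the sentence of Theorem~\ref{thm:scattered-modally-definable} under a second modality. It then uses that theorem as a black box:
\begin{itemize}
\item In one direction, a condensation $f:L\onto M$ that makes $f(a)$ the maximum of a non-scattered $M$ restricts to a condensation of $(-\infty,a]_L$ onto $M$. This is then composed with a condensation of $M$ onto a $\mathrm{Dense}$ order.
\item In the other direction, the truncation of $L$ onto $(-\infty,a]_L$ witnesses the possibility.
\end{itemize}

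\textbf{Your route.} You relativize $\mathrm{Dense}$ itself to a first-order formula $\mathrm{Dense}_{<}(x)$, so your $\mathrm{Scatt}_{<}$ has modal depth one. You then rerun the Doets argument directly on $(-\infty,a]_L$ and $(-\infty,a)_L$.
\begin{itemize}
\item Your key observation is that monotonicity and surjectivity force every point below $f(a)$ to have a preimage below $a$. This plays the role of the paper's restriction step.
\item Your rebuilding of $D'=\{q\}+(u,v)+\{p\}$, with everything at or above $a$ sent to $p$, is sound because $p$ is the maximum of $D'$.
\end{itemize}

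\textbf{What each buys.} The paper's version is more economical as a proof: it needs no second appeal to Doets and no reconstruction of $D'$. It also directly yields the configuration ``the image of $c$ is the maximum of a non-scattered order'', which Lemma~\ref{lem:sigma-dials-and-buttons-nonscattered-condensation} later extracts from $\neg\sigma$. Your version has lower modal complexity, and it would serve equally well in that later lemma. A witness to $\possible\mathrm{Dense}_{<}[c_M]$ already provides, below the image of $c_M$, a dense segment with a minimum and no maximum. Likewise, the purity argument of Lemma~\ref{lem:sigma-button-condensation} only uses that $\mathrm{Scatt}_{<}$ has the form $\neg\possible\varphi$, which your formula also has.
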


\begin{proof}
Let $\mathrm{Max}(x)$ express that $x$ is the maximum element, and define
\[
\mathrm{Scatt}_{<}(x):=\neg\possible\bigl(\mathrm{Max}(x)\wedge
\neg\mathrm{Scatt}\bigr).
\]
Fix a nonempty linear order $L$ and $a\in L$, and write
\[
L_{\le a}=(-\infty,a]_L
\]
for the closed initial segment through $a$ with its induced order.
First note that $(-\infty,a)_L$ is scattered if and only if $L_{\le a}$ is
scattered, since any copy of $\Q$ in $L_{\le a}$ cannot contain the
maximum element $a$.

Assume first that $(-\infty,a)_L$ is scattered, and hence that $L_{\le a}$ is
scattered.
Suppose towards a contradiction that
$L\satisfies\possible(\mathrm{Max}(a)\wedge\neg\mathrm{Scatt})$.
Choose a condensation $f:L\onto M$ such that, with $m_0=f(a)$,
\[
M\satisfies \mathrm{Max}[m_0]\wedge\neg\mathrm{Scatt}.
\]
If $a\le b$ in $L$, then monotonicity gives $m_0=f(a)\le f(b)$, while the
maximality of $m_0$ gives $f(b)\le m_0$; hence $f(b)=m_0$.
It follows that the restriction
\[
f\restr L_{\le a}:L_{\le a}\onto M
\]
is still monotone and surjective, hence a condensation.
Since $M\satisfies\neg\mathrm{Scatt}$, Theorem~\ref{thm:scattered-modally-definable}
gives a condensation $g:M\onto D$ with $D\satisfies\mathrm{Dense}$.
Then
\[
g\circ (f\restr L_{\le a}):L_{\le a}\onto D
\]
witnesses $L_{\le a}\satisfies\possible\mathrm{Dense}$, contradicting
Theorem~\ref{thm:scattered-modally-definable}.
Therefore $L\satisfies\mathrm{Scatt}_{<}[a]$.

Conversely, suppose that $(-\infty,a)_L$ is not scattered, and hence that
$L_{\le a}$ is not scattered.
Define the truncation map $\tau:L\to L_{\le a}$ by
\[
\tau(x)=
\begin{cases}
 x,&x\le a,\\
 a,&a<x.
\end{cases}
\]
Then $\tau$ is monotone and surjective, hence a condensation.
Clearly $L_{\le a}\satisfies \mathrm{Max}[\tau(a)]$, and since $L_{\le a}$ is not
scattered we also have $L_{\le a}\satisfies\neg\mathrm{Scatt}$.
Thus $\tau$ witnesses
\[
L\satisfies\possible\bigl(\mathrm{Max}(a)\wedge\neg\mathrm{Scatt}\bigr),
\]
so $L\not\satisfies\mathrm{Scatt}_{<}[a]$.
\end{proof}

\begin{corollary}
\label{cor:no-elim-cond}
The category $\LOcond$ does not admit modality elimination.
Indeed, the modal sentence $\mathrm{Scatt}$ is not equivalent in $\LOcond$ to
any first-order $\Lord$-sentence.
\end{corollary}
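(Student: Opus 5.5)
We need to show that $\mathrm{Scatt}$ is not equivalent in $\LOcond$ to any first-order $\Lord$-sentence. By Theorem~\ref{thm:scattered-modally-definable}, $\mathrm{Scatt}$ holds at a nonempty linear order exactly when that order is scattered. So it suffices to show that scatteredness is not first-order definable among nonempty linear orders. The plan is to exhibit two elementarily equivalent nonempty linear orders, one scattered and one not. If a first-order sentence $\sigma$ were equivalent to $\mathrm{Scatt}$ in $\LOcond$, then $\sigma$ would hold in the first order and fail in the second, contradicting elementary equivalence. Since modality elimination for $\LOcond$ would in particular make the sentence $\mathrm{Scatt}$ equivalent to a modality-free sentence, failure of elimination then follows.

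For the witnessing pair, I would take the ordinal $\omega^\omega$, which is scattered, and produce a non-scattered order elementarily equivalent to it. The cleanest route is compactness together with downward L\"owenheim--Skolem. Let $T=\Th(\omega^\omega)$, and consider $T$ together with new constants $c_q$ for $q\in\Q$ and axioms $c_q<c_{q'}$ whenever $q<q'$. A finite fragment mentions only finitely many $c_q$, and it is satisfied in $\omega^\omega$ by any increasing finite tuple. Hence the whole theory has a model $N$. Its reduct to $\Lord$ is a linear order elementarily equivalent to $\omega^\omega$ that contains a copy of $\Q$, so it is not scattered. It is nonempty, so it is a world of $\LOcond$.

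Infinite $\omega^\omega$ is not even needed: $\omega$ works just as well, since the argument only uses that a scattered infinite order has models of its theory embedding $\Q$. Any infinite linear order satisfies all finite fragments of the $c_q$ diagram. So the choice is simply $L_0=\omega$ and $L_1\equiv\omega$ containing $\Q$, for instance $\omega+\Q\times\Z$ in the lexicographic order, which is a standard nonstandard model of $\Th(\omega,<)$. The compactness route avoids having to verify this concrete identification.

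There is essentially no obstacle. The only point needing care is that equivalence \emph{in $\LOcond$} means agreement at all worlds. Since the truth of a first-order sentence at a world depends only on that world, and Theorem~\ref{thm:scattered-modally-definable} computes $\mathrm{Scatt}$ world by world, the contradiction at $L_0$ and $L_1$ is immediate. One might also note that Doets's characterization uses choice, but the paper already assumes it.
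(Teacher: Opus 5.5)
Your proof is correct and has the same overall shape as the paper's. Both use Theorem~\ref{thm:scattered-modally-definable} to compute $\mathrm{Scatt}$ world by world, then exhibit two elementarily equivalent nonempty orders, one scattered and one not. The difference lies only in how that pair is produced.

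The paper takes the explicit pair $(\Z,\le)$ and $(\Q\times\Z,\le_{\mathrm{lex}})$. It observes that both are discrete linear orders without endpoints and cites the completeness of that theory to conclude they are elementarily equivalent.

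You instead obtain a non-scattered model of $\Th(\omega)$ by compactness, adding constants $c_q$ for $q\in\Q$ ordered like $\Q$. Your route needs nothing beyond compactness. It also shows more generally that every infinite scattered order is elementarily equivalent to a non-scattered one. The paper's route is fully concrete, but it relies on a cited completeness theorem.

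Two small remarks. Downward L\"owenheim--Skolem plays no role in your argument. The detour through $\omega^\omega$ is unnecessary, since you end up using $\omega$ anyway.
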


\begin{proof}
Suppose toward a contradiction that $\mathrm{Scatt}$ were equivalent to some
$\Lord$-sentence $\psi$.
Let $L_0=(\Z,\le)$ and
$L_1=(\Q\times \Z,\le_{\mathrm{lex}})$.
Then $L_0$ is scattered and $L_1$ is not.
On the other hand, both are discrete linear orders without endpoints, and so
both satisfy the complete first-order theory of endless discrete linear orders;
see \cite[Proposition~2.4.10]{Marker}.
Hence $L_0\equiv L_1$ in $\Lord$, contradicting the fact that they disagree on
$\mathrm{Scatt}$.
\end{proof}

\section{The modal theory of linear orders and embeddings}

Let us now turn to propositional validities. Embeddings are the most rigid of
the linear-order categories. They do not identify points, and so all modal
information comes from the possibility of inserting new points into cuts and
intervals. The elimination theorem from the previous section reduces modal
assertions to finite interval data, and the remaining task is to determine which
control statements those data support at a given world. Adjacent pairs are the
basic source of buttons. Splitting an adjacent pair is possible, and once split it
can never become adjacent again.

For $n\in\omega$, let $\theta_n$ denote the sentence ``there are at least
$n$ points'', with $\theta_0$ a tautology.

\begin{theorem}
\label{thm:S42-emb-all}
Every world in $\LOemb$ validates $\SFourTwo$ for arbitrary substitution
instances, even with parameters.
\end{theorem}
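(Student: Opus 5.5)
The plan is to invoke Theorem~\ref{thm:S42-lower-bound} with $A=W$ itself. That theorem then yields $\SFourTwo\subseteq\Val_{\LOemb}(W,\Lmord(W))$, and this covers every parameter set at once. The whole task therefore reduces to showing that the cone above $W$ is $W$-amalgamable. Concretely, we fix an embedding $h:W\emb V$ and a span $U_0\xleftarrow{f_0}V\xrightarrow{f_1}U_1$ in $\LOemb$. We must find embeddings $g_0:U_0\emb U$ and $g_1:U_1\emb U$ with $g_0f_0h=g_1f_1h$ on $W$. We will in fact show the stronger statement that they agree on all of $V$, so that $\LOemb$ has the amalgamation property over arbitrary base orders.

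The amalgam will be built directly, as a pushout of linear orders. We may replace $U_0$ and $U_1$ by isomorphic copies, which is harmless by Proposition~\ref{prop:renaming-lemma}. After that we assume $V\subseteq U_0$, $V\subseteq U_1$, $f_0,f_1$ are inclusions, and $U_0\cap U_1=V$. Let $U=U_0\cup U_1$. Points within the same $U_i$ are ordered as in $U_i$. For $x\in U_0\setminus V$ and $y\in U_1\setminus V$, we set $x<y$ if there is $v\in V$ with $x\le_{U_0} v\le_{U_1} y$, and $y<x$ if there is $v\in V$ with $y\le_{U_1}v\le_{U_0}x$. If neither holds, then $x$ and $y$ lie in the same cut of $V$, and we break the tie by declaring $x<y$. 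That is, we place $U_0$-points before $U_1$-points inside each common cut of $V$.

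The only step with real content is checking that this relation is a linear order and that the inclusions are order-embeddings. Totality and irreflexivity are immediate. Antisymmetry holds because both witness clauses cannot hold at once: that would give $x\le v\le y\le v'\le x$ with $v,v'\in V$, forcing $v\le v'\le v$ in $V$ and hence $x=v\in V$. Transitivity is the expected obstacle. We will handle it by sorting every point of $U$ according to the cut of $V$ it determines. Concretely, we map each point to the pair (cut of $V$, side), with $V$-points occupying their own positions, and check that the relation just defined is the lexicographic order on the key (cut, index $i$ of $U_i$, order within $U_i$). Once the definition is recognized as a lexicographic order, transitivity follows without a case analysis. Restricted to $U_i$ the relation agrees with $\le_{U_i}$, so the inclusions $g_i:U_i\emb U$ are embeddings. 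They agree on $V$, and hence on $h[W]$.

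An even quicker alternative exists if the cut bookkeeping becomes awkward. Every linear order embeds into a sufficiently saturated dense order, so one could embed $U_0$ and $U_1$ into a common DLO-type model over $V$ by compactness, using the elementary diagram together with the quantifier elimination of DLO. The explicit lexicographic amalgam is more elementary, however, and is the version we intend to write. With amalgamability established, Theorem~\ref{thm:S42-lower-bound} yields the statement.
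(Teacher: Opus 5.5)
Your proof is correct. It has the same skeleton as the paper's: both reduce, via Theorem~\ref{thm:S42-lower-bound}, to amalgamating an arbitrary span of embeddings over its entire base $V$ after arranging $U_0\cap U_1=V$, and that is stronger than the required $h[W]$-amalgamability.

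The routes differ only in how the amalgam is produced.
\begin{itemize}
\item The paper takes $\Th(\mathrm{LO})\cup\diag(U_0)\cup\diag(U_1)$ and applies compactness. It notes that each finite fragment is a finite pattern realizable by inserting finitely many new points into cuts over $V$.
\item You build the amalgam outright as an ordered sum indexed by the positions of $V$ (its points and its cuts). Inside each cut you place the $U_0$-points before the $U_1$-points.
\end{itemize}
Your version avoids compactness and yields a concrete amalgam; it is the infinite form of the combinatorics the paper compresses into ``finite fragments are obviously realizable.'' The compactness version is shorter to write because it only needs the finite case.

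To make your identification with the lexicographic order airtight, write out two one-line facts:
\begin{itemize}
\item The cut map is monotone on each $U_i$. This is what makes your relation restricted to $U_i$ agree with the lexicographic order.
\item If neither witness clause holds for $x\in U_0\setminus V$ and $y\in U_1\setminus V$, then $x$ and $y$ determine the same cut. Indeed, any $v\in V$ lying in one cut but not the other is itself a witness for $x<y$ or $y<x$.
\end{itemize}

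Replacing the $U_i$ by isomorphic copies needs no appeal to the renaming lemma. Amalgams transfer along isomorphisms simply by composing with them.
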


\begin{proof}
By Theorem~\ref{thm:S42-lower-bound} it is enough to show that every span in
$\LOemb$ is amalgamable.
Let
$L_0\xleftarrow{}L\xrightarrow{}L_1$
be a span of embeddings.
Replacing $L_0$ and $L_1$ by isomorphic copies, we may assume that
$L=L_0\cap L_1$.
Now consider the theory
\[
\Th(\mathrm{LO})\cup \diag(L_0)\cup \diag(L_1)
\]
in the language obtained by adding constant symbols for the elements of
$L_0\cup L_1$, where $\diag(A)$ denotes the full atomic diagram of $A$
(including negated atomic formulas, and in particular all distinctness
statements).
Any finite fragment merely specifies a finite order-pattern, and such a pattern
can obviously be realized in a linear order by inserting the finitely many new
constants into the cuts determined over $L$.
So the theory is finitely satisfiable and therefore satisfiable by compactness.
The resulting model is an amalgam.
\end{proof}

\begin{lemma}
\label{lem:upward-absolute-sentences}
Let $W$ be a world in $\LOemb$, let $\mathcal L$ be an intermediate language
\[
\Lord\subseteq \mathcal L\subseteq \Lmord,
\]
and let $b$ be a parameter-free sentence of $\mathcal L$.
Assume that $b$ is upward absolute on $\Cone(W)$, in the sense that for all
embeddings $U\emb V$ above $W$, if $U\satisfies b$, then $V\satisfies b$.
Then exactly one of the following holds on $\Cone(W)$:
\begin{enumerate}[label=\textup{(\roman*)}]
\item $b$ is false at every world in $\Cone(W)$;
\item there is $n\in\omega$ such that for every $U\in\Cone(W)$,
\[
U\satisfies b
\qquad\text{if and only if}\qquad
U\satisfies \theta_n.
\]
\end{enumerate}
In particular, any two such sentences are comparable by implication on the
cone above $W$. If some world above $W$ satisfies $b$, then case \textup{(i)}
is impossible, and therefore case \textup{(ii)} holds.
\end{lemma}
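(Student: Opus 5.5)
The plan is to reduce everything to the sentential case of Theorem~\ref{thm:elimination-emb}, applied not to $b$ itself but to $\possible\neg b$. The point is that upward absoluteness makes $\neg b$ equivalent on $\Cone(W)$ to its own possibility, and in the sentence case $k=0$ a possibility statement can only express an upper bound on the size of the whole order.

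\emph{Step 1: $\neg b\leftrightarrow\possible\neg b$ on $\Cone(W)$.} Fix $U\in\Cone(W)$. Every world accessible from $U$ is again in $\Cone(W)$, because arrows compose, so the modal clauses at $U$ range over embeddings $U\emb V$ inside the cone. The direction $\neg b\to\possible\neg b$ comes from the identity arrow. Conversely, suppose $U\emb V$ and $V\satisfies\neg b$. If $U\satisfies b$, then upward absoluteness would give $V\satisfies b$, which is impossible. Hence $U\satisfies\neg b$.

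\emph{Step 2: apply modality elimination to $\possible\neg b$.} Since $\neg b$ is a sentence of $\Lmord$, Theorem~\ref{thm:elimination-emb} with $k=0$ applies. It makes $\possible\neg b$ equivalent in $\LOemb$ either to falsity (the case $S=\varnothing$) or to a finite disjunction $\beta_1\vee\cdots\vee\beta_r$. Here each $\beta_i$ is either the empty conjunction, when its coordinate is $\infty$, or of the form $\#I_0\le N_i$, where $I_0$ is the whole order. So the disjunction is either identically true, or equivalent to ``the order has at most $N$ points'' with $N=\max_i N_i$. This equivalence holds at all linear orders, hence in particular throughout $\Cone(W)$. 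The only care needed is that $\Cone(W)$ is the full subcategory, so the satisfaction of $\possible$ computed in the cone agrees with that computed in $\LOemb$ at these worlds.

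\emph{Step 3: read off the cases.} Combining Steps~1 and~2, the cone falls into exactly one of three situations. If $\neg b$ is equivalent to truth, then $b$ is false everywhere in the cone, which is case~(i). If $\neg b$ is equivalent to falsity, then $b$ is always true, which is case~(ii) with $n=0$, since $\theta_0$ is a tautology. If $\neg b$ is equivalent to $\neg\theta_{N+1}$, then $b\leftrightarrow\theta_{N+1}$, which is case~(ii) with $n=N+1$.

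The two cases are mutually exclusive. Indeed, $W$ itself lies in the cone, and $W\satisfies\theta_n$ forces $W\satisfies b$ under~(ii), so~(ii) and~(i) cannot hold together. The same observation shows that the existence of a $b$-world above $W$ rules out~(i). Finally, for comparability: any two such sentences are equivalent on the cone to $\bot$ or to some $\theta_n$, and these are linearly ordered by implication, since $\theta_m\to\theta_n$ whenever $n\le m$.

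The main obstacle is Step~2. One has to make sure that the sentence case of the elimination theorem really yields only upper bounds on the total size. This holds because, for $k=0$, the interval vector $v$ has the single coordinate $\#I_0$. One also has to make sure the finite disjunction collapses to a single threshold, which it does because a disjunction of upper bounds on one quantity is just the largest of them.
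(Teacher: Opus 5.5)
Your proof is correct and follows the paper's argument. Step~1 is the paper's claim that $\neg b\leftrightarrow\possible\neg b$ on $\Cone(W)$, and Step~2 is the sentence case $k=0$ of Theorem~\ref{thm:elimination-emb}. The paper applies that case only after replacing $b$ by a modality-free equivalent $\beta$; you can skip that step because the theorem is stated for arbitrary modal $\theta$.

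There is one slip, in your argument that (i) and (ii) are mutually exclusive. $W$ need not satisfy $\theta_n$: take $W=\varnothing$ and $b=\theta_3$. Use instead a world in the cone with at least $n$ points, for instance $W$ with $n$ new points appended on top. Under (ii) that world satisfies $b$, which refutes (i). The paper itself does not argue this exclusivity at all.
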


\begin{proof}
By Theorem~\ref{thm:elimination-emb}, the sentence $b$ is equivalent in
$\LOemb$ to a modality-free $\Lord$-sentence $\beta$.

We first claim that on $\Cone(W)$, $\neg b$ holds if and only if $\possible\neg b$ holds.
The forward implication is by reflexivity. For the reverse implication,
suppose $U\in\Cone(W)$ and $U\satisfies\possible\neg b$. Choose an embedding
$U\emb V$ with $V\in\Cone(W)$ and $V\satisfies\neg b$. If $U\satisfies b$, then the
assumed upward absoluteness of $b$ on $\Cone(W)$ would imply $V\satisfies b$, a
contradiction. Hence $U\satisfies\neg b$, establishing the claim.

Since $b\leftrightarrow\beta$ holds in $\LOemb$, we also have
$\neg b\leftrightarrow\neg\beta$ and therefore
$\possible\neg b\leftrightarrow\possible\neg\beta$ in $\LOemb$.
Combining this with the claim, we obtain on $\Cone(W)$ that $\neg b$ holds if and only if $\possible\neg\beta$ holds.
Now apply the sentence case $k=0$ of
Theorem~\ref{thm:elimination-emb} to the modality-free sentence $\neg\beta$.
When $k=0$, the only consecutive interval is the whole order, so
$\possible\neg\beta$ is equivalent in $\LOemb$ either to false, or to a finite
disjunction of upper bounds on the size of the whole order, possibly with one
vacuous disjunct coming from the maximal value $\infty$.
Equivalently, $\possible\neg\beta$ is one of the following:
false, true, or $\neg\theta_{m+1}$ for some $m\in\omega$.
If $\possible\neg\beta$ is false, then $\neg b$ is false throughout $\Cone(W)$,
so $b\leftrightarrow\theta_0$ on that cone.
If $\possible\neg\beta$ is true, then $\neg b$ holds throughout $\Cone(W)$, so
$b$ is false everywhere on that cone.
Finally, if
\[
\LOemb\satisfies \possible\neg\beta\leftrightarrow \neg\theta_{m+1},
\]
then on $\Cone(W)$ we have
\[
\neg b
\qquad\text{if and only if}\qquad
\neg\theta_{m+1},
\]
and therefore
\[
b
\qquad\text{if and only if}\qquad
\theta_{m+1}.
\]
This proves that every such $b$ is either false everywhere on $\Cone(W)$ or
else equivalent there to a single threshold $\theta_n$.
The comparability assertion follows immediately, since the thresholds are
linearly ordered by implication and the everywhere-false sentence implies every
sentence.
\end{proof}

We shall use the standard \(\SFourThree\) button criterion over
\(\SFourTwo\): if, at a world and for the relevant substitution language,
there are no two independent weak buttons, then the \(\SFourThree\) axiom is
valid there. Equivalently, any failure of \(\axiomf{.3}\) yields two
independent weak buttons; see the control-statement discussion in
\cite[Section~4]{WolSets}.

\begin{lemma}
\label{lem:S43-sentential-emb}
Every world in $\LOemb$ validates $\SFourThree$ for sentential
substitutions.
\end{lemma}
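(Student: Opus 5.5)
The plan is to use the dual form of the \axiomf{.3} axiom,
\[
\necessary(\necessary p\to q)\vee\necessary(\necessary q\to p),
\]
which over \SFour{} axiomatizes \SFourThree{}. By Theorem~\ref{thm:S42-emb-all}, \SFour{} (indeed \SFourTwo{}) already holds throughout every cone. So by the cone lemma (Proposition~\ref{prop:cone-lemma}) it suffices to show one thing: for every $W$, every $U\in\Cone(W)$, and all modal $\Lord$-sentences $p,q$, the displayed instance holds at $U$. Since $\Cone(U)\subseteq\Cone(W)$, it is enough to work inside $\Cone(W)$.

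The key observation is that necessity statements about sentences are upward absolute. Indeed, $\necessary p$ is a parameter-free sentence of $\Lmord$. If $U\emb V$ lies above $W$ and $U\satisfies\necessary p$, then every extension of $V$ is, by composition, an extension of $U$, so $V\satisfies\necessary p$. Lemma~\ref{lem:upward-absolute-sentences} therefore applies to $b=\necessary p$. On $\Cone(W)$, either $\necessary p$ is false everywhere, or there is $a\in\omega$ with $\necessary p\leftrightarrow\theta_a$. Likewise, $\necessary q$ is either false on the whole cone or equivalent there to some $\theta_{b}$. In particular, $\necessary p$ and $\necessary q$ are comparable by implication on $\Cone(W)$, as the lemma records.

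Now I would argue by contradiction. Suppose both disjuncts fail at $U$. Then there are $V_1,V_2\in\Cone(U)$ with
\[
V_1\satisfies\necessary p\wedge\neg q
\quad\text{and}\quad
V_2\satisfies\necessary q\wedge\neg p.
\]
Both $\necessary p$ and $\necessary q$ are then realized somewhere in the cone, so neither is everywhere false. Hence they are equivalent to thresholds $\theta_a$ and $\theta_b$ respectively. Without loss of generality $a\le b$, so $\theta_b\to\theta_a$. Since $V_2\satisfies\theta_b$, we get $V_2\satisfies\theta_a$, so $V_2\satisfies\necessary p$. By reflexivity, $V_2\satisfies p$, contradicting $V_2\satisfies\neg p$. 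The case $b\le a$ is symmetric and uses $V_1$.

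The only point needing care, and the expected obstacle, is the justification that Lemma~\ref{lem:upward-absolute-sentences} may be applied to the modal sentence $\necessary p$ rather than to a first-order sentence. This is exactly what that lemma permits, since it allows $b$ to range over the full language $\Lmord$ and internally invokes modality elimination (Theorem~\ref{thm:elimination-emb}). It is also essential that the substitutions are sentences. With parameters, $\necessary p$ would depend on interval sizes around the named points rather than on a single cardinality threshold, and the linear comparability would fail. That restriction is why the statement is sentential only.
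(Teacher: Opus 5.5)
Your proof is correct. It rests on the same key fact as the paper, but reaches it by a slightly different route.

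The paper argues through control statements:
\begin{enumerate}
\item It invokes the cited \SFourThree{} button criterion: over \SFourTwo{}, a failure of \axiomf{.3} yields two independent weak buttons.
\item It uses the \SFourTwo{} lower bound to upgrade these weak buttons to buttons.
\item It purifies them to $\necessary b_i$.
\item It applies Lemma~\ref{lem:upward-absolute-sentences} to see that each purified button is a threshold $\theta_{n_i}$, so the two cannot be independent.
\end{enumerate}

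You instead verify the axiom $\necessary(\necessary p\to q)\vee\necessary(\necessary q\to p)$ directly. Your observation that $\necessary p$ is automatically upward absolute is exactly the paper's purification step, obtained for free. This lets Lemma~\ref{lem:upward-absolute-sentences} apply to $\necessary p$ and $\necessary q$ at once.

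Your version is more self-contained. It needs neither the external button criterion nor the notion of independence. It also does not need the \SFourTwo{} lower bound, only \SFour{}, which holds in every Kripke category; your citation of Theorem~\ref{thm:S42-emb-all} is therefore stronger than necessary.

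The paper's version fits the control-statement framework it uses throughout, for example in Theorem~\ref{thm:infinite-emb-sentential}. It also makes the conceptual reason visible: in $\LOemb$ the pure sentential buttons form a linearly ordered family of thresholds, so no two of them are independent.
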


\begin{proof}
Suppose, towards a contradiction, that $b_0,b_1$ are independent sentential
weak buttons at a world $W$.
By Theorem~\ref{thm:S42-emb-all}, $\SFourTwo$ is valid at $W$ for
sentential substitutions, so Proposition~\ref{prop:button-purification} shows that both
$b_i$ are buttons.
Set $\widehat b_i=\necessary b_i$.
Then each $\widehat b_i$ is a pure sentential button, and independence is
preserved by this replacement: whenever a further world realizes a pattern of
pushed original buttons, exactly the corresponding $\widehat b_i$ hold there,
because $\widehat b_i$ says precisely that $b_i$ has been pushed.
Thus, after replacing $b_i$ by $\widehat b_i$ and dropping the hats, we may
assume that $b_0,b_1$ are independent pure sentential buttons.
In particular, each $b_i$ is upward absolute on the cone above $W$.
Since each original $b_i$ was a weak button, each pure version is possible
above $W$, so case~\textup{(i)} of Lemma~\ref{lem:upward-absolute-sentences}
cannot occur.
Therefore there are $n_0,n_1$ such that
$b_i\leftrightarrow \theta_{n_i}$ throughout the cone.
But the thresholds $\theta_n$ are linearly ordered by implication, so $b_0$
and $b_1$ cannot be independent, a contradiction.
Hence there is no pair of independent sentential weak buttons, and the logic
$\SFourThree$ follows.
\end{proof}

\begin{theorem}
\label{thm:upper-S5-emb-sent}
For every world $W$ in $\LOemb$ and every intermediate language
$\Lord\subseteq \mathcal L\subseteq \Lmord$,
\[
\Valsent_{\LOemb}(W,\mathcal L)\subseteq \SFive.
\]
\end{theorem}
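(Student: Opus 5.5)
We want to show $\Valsent_{\LOemb}(W,\mathcal L)\subseteq\SFive$. The plan is to use clause~(1) of Proposition~\ref{prop:upper-bounds}: it suffices to exhibit, for every $m\ge1$, an $m$-dial at $W$ whose values are parameter-free sentences of $\Lord\subseteq\mathcal L$. We also use that $\mathcal L$ is closed under Boolean connectives, which holds since it contains $\Lord$ and we may take Boolean combinations. The natural candidates are parity-type or residue-type statements about the size of the order. These cannot be used directly, however. Once an order is infinite its size is frozen, and the sentences $\theta_n$ are buttons, not dials. So we need first-order sentences whose truth can be toggled back and forth by embeddings, even in infinite orders.

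For the dial I will use the number of points in a definable configuration that embeddings can both create and destroy. The cleanest choice is to count endpoints and adjacencies at the ends.

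Consider the sentences $\delta_j$ for $j<m$: ``there is a least element, and the initial segment consisting of the least element and its successive immediate successors has exactly $j$ elements before the first point with no immediate successor (or before a limit)''. More precisely, $\delta_j$ says that the order has a minimum $z_0$, that $z_0<z_1<\dots<z_{j-1}$ with each $z_{i+1}$ the immediate successor of $z_i$, and that $z_{j-1}$ has no immediate successor. Put $d_j=\delta_j$ for $0<j<m-1$; let $d_0$ say ``there is no minimum'', and let $d_{m-1}$ be the negation of all the others. These are first-order sentences, and exactly one holds in every order.

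It remains to check that from any linear order $U$ above $W$, every value is reachable by an embedding. This is the main step.

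To reach $d_0$, prepend a copy of $\omega^*$ at the bottom; this destroys the minimum.

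To reach $d_j$ for $0<j<m-1$, prepend a fresh chain $0<1<\dots<j-1$ followed by a copy of $\Q$, placed entirely below $U$. The new minimum then has exactly $j$ successive immediate successors, the last of which has no immediate successor because $\Q$ follows it.

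To reach $d_{m-1}$, prepend a chain of length $m$ followed by $\Q$.

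All of these are end-extensions on the left, hence embeddings, and they work regardless of what $U$ looked like. The empty order also works with the same constructions. Therefore each $d_j$ is necessarily possible, and exactly one holds at every world, so $d_0,\dots,d_{m-1}$ is an $m$-dial.

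The only obstacle I anticipate is verifying that the prepended configuration really determines the truth value. The key point is that the prefix of successive successors starting at the new minimum lives entirely inside the new block, since the copy of $\Q$ separates it from $U$. With that checked, Proposition~\ref{prop:upper-bounds}(1) gives the inclusion $\Valsent_{\LOemb}(W,\mathcal L)\subseteq\SFive$.
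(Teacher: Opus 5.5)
Your proof is correct and follows the paper's route: Proposition~\ref{prop:upper-bounds}(1) applied to a sentential dial built from finite successor chains that a dense buffer insulates from the rest of the order. The only difference is the choice of dial. The paper's dial records the largest finite discrete interval anywhere (via the nested sentences $\mathrm{Disc}_n$), so it must first embed into an order with no adjacent pairs and no endpoints before adding a block; your dial reads only the successor chain from the minimum, so a single prepending of $C_j+\Q$ (or $\omega^*$) sets each value directly.
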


\begin{proof}
Since enlarging the sentence-substitution language can only shrink the set of
valid propositional modal formulas, it is enough to treat the case
$\mathcal L=\Lord$.
By Proposition~\ref{prop:upper-bounds}(1), it suffices to produce arbitrarily
long finite dials.
Fix $N\ge 3$.
For $n\ge 2$, let $\mathrm{Disc}_n$ be the sentence saying that there is a
discrete interval of size $n$, namely an interval with exactly $n$ points.
Set
\[
d_1:=\neg \mathrm{Disc}_2,
\qquad
 d_n:=\mathrm{Disc}_n\wedge \neg\mathrm{Disc}_{n+1}
 \ (2\le n<N),
\qquad
d_{\ge N}:=\mathrm{Disc}_N.
\]
Exactly one of these sentences holds in any linear order.  Indeed, the
sentences $\mathrm{Disc}_n$ are nested: $\mathrm{Disc}_{n+1}$ implies
$\mathrm{Disc}_n$, since an $(n+1)$-point discrete interval contains an
$n$-point discrete subinterval.  Thus this dial records not how many discrete
intervals of a given size occur, but the largest finite size, below the cutoff
$N$, of a discrete interval occurring anywhere in the order.
Moreover, from any world we may embed into a world with no adjacent pair and
no endpoints, thereby destroying the old finite discrete intervals, and then
prepend or append a finite discrete interval of any prescribed size.  Since the
dense part has no endpoint at the cut, no larger finite discrete interval is
created across the cut.  This realizes every dial value.
\end{proof}

\par\smallskip\noindent
We next isolate the finite embedding worlds.  Let $\Phi_2$ be the propositional modal formula
\begin{align*}
\Phi_2 := {} &
\necessary\Bigl((q\vee p_0\vee p_1)\wedge \neg(p_0\wedge p_1)\wedge
\neg(p_0\wedge q)\wedge \neg(p_1\wedge q)\Bigr) \\
& \wedge \necessary\Bigl((p_0\to \possible p_1)\wedge (p_1\to \possible p_0)\Bigr) \\
& \wedge \necessary\Bigl((p_0\to \possible q)\wedge (p_1\to \possible q)\Bigr) \\
& \wedge \necessary\Bigl(q\to (\neg\possible p_0\wedge \neg\possible p_1)\Bigr).
\end{align*}
Set
\[
\chi_2:=\Phi_2\to \neg p_0,
\qquad
\Sfourthreecap:=\SFourThree+\chi_2.
\]
For $n,m\ge 1$, let $C_{n,m}$ be the finite preorder obtained from a chain of
$n$ singleton clusters
$c_0<\cdots<c_{n-1}$
by capping it with an $m$-cluster
$\{t_0,\dots,t_{m-1}\}$
above $c_{n-1}$.
These are the nondegenerate finite capped chains used below.
\par\smallskip

We use the standard local finite-frame characterization of $\SFourThree$:
for finite preorders, $\SFourThree$ holds exactly when every generated cone
has linearly ordered cluster quotient. Equivalently, a finite rooted preorder
validates $\SFourThree$ if and only if its cluster quotient is a finite
linear order; see, for example, \cite{BRV,CZ}.

\begin{proposition}
\label{prop:chi2}
A finite preorder validates $\chi_2$ if and only if every non-maximal cluster is
a singleton.
Consequently, a finite rooted preorder validates
$\Sfourthreecap$ if and only if it is a finite capped chain, allowing the
possibly degenerate case of no lower singleton clusters.
\end{proposition}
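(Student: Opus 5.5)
I will prove the two directions of the characterization of $\chi_2$ separately, and then combine the result with the local finite-frame characterization of $\SFourThree$ recalled just before the statement.

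\emph{If a non-maximal cluster is not a singleton, then $\chi_2$ fails.} Let $C$ be a cluster with at least two points $u_0\neq u_1$, and let $v$ lie strictly above $C$ (in a cluster above $C$). I will take the valuation $p_0=\{u_0\}$, $p_1=C\setminus\{u_0\}$, and $q=$ everything else. I evaluate at $u_0$, whose generated cone is $C$ together with the points strictly above it. On that cone the three variables partition the points, which gives the first conjunct of $\Phi_2$. Every point of $C$ sees both $u_0$ and $u_1$, and sees $v$, which lies in $q$; this gives the second and third conjuncts. Every $q$-point in the cone is strictly above $C$, so it cannot see back into $C$; this gives the fourth conjunct. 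Hence $\Phi_2$ holds at $u_0$ while $p_0$ holds there, so $\chi_2$ fails.

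\emph{If every non-maximal cluster is a singleton, then $\chi_2$ is valid.} Suppose $w\satisfies\Phi_2\wedge p_0$. By the second conjunct, $w$ sees some $w'\satisfies p_1$, and $w'$ sees a $p_0$-point $w''$. By the third conjunct, $w$ sees some $q$-point $z$. By the fourth conjunct, $z$ does not see $w$, since $w$ sees $p_0$-points via reflexivity. So $w$ lies strictly below $z$, and its cluster is therefore non-maximal. Hence the cluster of $w$ is the singleton $\{w\}$.

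Now $w'$ sees $w''$ and $w''$ sees a $p_1$-point, and so on. I intend to argue that the points of the cone above $w$ satisfying $p_0\vee p_1$ all lie in the cluster of $w$. The plan is as follows.
\begin{enumerate}
\item Every point $y\satisfies p_0\vee p_1$ in the cone is itself non-maximal: by the third conjunct it sees a $q$-point, which by the fourth conjunct cannot see back to $y$. Hence every such $y$ forms a singleton cluster.
\item Finiteness rules out an infinite strictly ascending chain. A $p_0$-singleton $y$ must see a $p_1$-point, which is distinct from $y$ and hence strictly above it. That $p_1$-point in turn sees a strictly higher $p_0$-point, and so on, producing an infinite strictly increasing sequence in a finite preorder.
\end{enumerate}
This contradiction shows $\neg p_0$ at $w$. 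The main care needed is only this alternation argument, and it uses only finiteness of the frame.

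For the consequence, a finite rooted preorder validates $\SFourThree$ exactly when its cluster quotient is a finite linear order. In a linear cluster quotient only the top cluster is maximal, so validity of $\chi_2$ reduces to requiring that all lower clusters be singletons. This is exactly the definition of a capped chain, including the degenerate case of a single cluster. Since $\chi_2$ is validated frame-wide, validity of $\Sfourthreecap$ as a normal logic coincides with validity of $\SFourThree$ together with $\chi_2$.
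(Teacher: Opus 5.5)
Your proof is correct and follows essentially the same route as the paper. For the first direction you use the same counter-valuation on a non-singleton non-maximal cluster, with the roles of $p_0$ and $p_1$ swapped. For the converse you use the same finiteness idea, building an infinite strictly ascending alternating $p_0/p_1$ chain of singleton clusters where the paper picks a point in a maximal cluster meeting $p_0\vee p_1$. (The stray sentence announcing that all $p_0\vee p_1$-points lie in the cluster of $w$ is not what your steps actually prove, and it can be dropped; steps (1) and (2) already give the contradiction.)
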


\begin{proof}
Assume first that some non-maximal cluster $C$ contains two distinct worlds
$c_0\neq c_1$.
Let $q$ be true exactly strictly above $C$, let $p_1$ be true exactly at $c_1$,
and let $p_0$ be true at the remaining worlds of the cluster $C$.
Then $\Phi_2$ holds at $c_0$, while $p_0$ also holds at $c_0$.
So $\chi_2$ fails.

Conversely, suppose every non-maximal cluster is a singleton and that
$\Phi_2\wedge p_0$ holds at some world $u$.
Consider the set of worlds above $u$ satisfying $p_0\vee p_1$, and let $a$ be
one in a maximal cluster meeting this set.
That cluster cannot be maximal. Indeed, $\Phi_2$ forces a $q$-successor
above $a$, and that successor cannot lie in the same cluster: if it did, the
fourth boxed conjunct of $\Phi_2$ would make it unable to see any $p_0$- or
$p_1$-world, although $a$ itself is such a world in the same cluster.
Hence the $q$-successor lies in a strictly higher cluster, and $a$ is itself
a singleton.
If $a\satisfies p_0$, then $\Phi_2$ forces a $p_1$-successor above $a$;
if $a\satisfies p_1$, it forces a $p_0$-successor above $a$.
Either way we contradict maximality.
Thus $\chi_2$ holds.

For the consequence, let $F$ be a finite rooted preorder.
If $F\satisfies \Sfourthreecap$, then in particular
$F\satisfies \SFourThree$, so the rooted $\SFourThree$ characterization
shows that the cluster quotient of $F$ is a finite linear order. It therefore
has a unique maximal, or top, cluster. By the first part, every non-top cluster
is a singleton. Hence $F$ is a finite capped chain in the possibly degenerate
sense above.
Conversely, every finite capped chain has linear cluster quotient and only
singleton non-top clusters, so the same characterization yields
$F\satisfies \SFourThree$, and the first part yields $F\satisfies \chi_2$.
Therefore $F\satisfies \Sfourthreecap$.
\end{proof}

\begin{corollary}
\label{cor:capped-chains-characterize}
The propositional modal theory of the frames $C_{n,m}$ with $n\ge 1$ and
$m\ge 2$ is exactly $\Sfourthreecap$.
\end{corollary}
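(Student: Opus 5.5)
The proof splits into soundness (every $C_{n,m}$ validates $\Sfourthreecap$) and completeness (every formula outside $\Sfourthreecap$ fails on some $C_{n,m}$ with $n\ge1$, $m\ge2$). Both directions go through Proposition~\ref{prop:chi2}, together with the generated-subframe and bounded-morphism invariance of Proposition~\ref{prop:frame-invariance}.

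\emph{Soundness.} Each $C_{n,m}$ is a finite rooted preorder. Its cluster quotient is the finite chain $c_0<\cdots<c_{n-1}<\{t_0,\dots,t_{m-1}\}$, and all of its non-top clusters are singletons. So it is a finite capped chain, and Proposition~\ref{prop:chi2} gives $C_{n,m}\satisfies\Sfourthreecap$. The frame-validity of the axioms is closed under modus ponens, necessitation and substitution. Hence the whole normal logic $\Sfourthreecap$ is contained in the theory of these frames.

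\emph{Completeness, first step: finite rooted countermodels.} Let $\psi\notin\Sfourthreecap$. I will invoke the classical theorem of Bull and Fine that every normal extension of $\SFourThree$ has the finite model property; see \cite{CZ}. This gives a finite frame validating $\Sfourthreecap$ that refutes $\psi$ at some point $w$. Passing to the generated subframe at $w$ preserves both the refutation and the validity of $\Sfourthreecap$. So we obtain a finite rooted frame $F$ with $F\satisfies\Sfourthreecap$ and $F,w\not\satisfies\psi$. By Proposition~\ref{prop:chi2}, $F$ is a finite capped chain with $n'\ge0$ lower singletons and a top cluster of some size $k\ge1$.

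I expect the main obstacle to be this appeal to the finite model property. If a self-contained argument is wanted instead, I would try a filtration of the canonical model of $\Sfourthreecap$ through the subformulas of $\psi$ and $\Phi_2$, using the transitive-closure filtration so as to preserve $\SFourThree$. The delicate point is checking that $\chi_2$ survives the filtration. The route I would attempt first is to argue that the standard Fine-style selective filtration for $\SFourThree$ produces a finite chain of clusters, and then to split any non-singleton, non-maximal cluster into singletons by unravelling. Such unravelling only adds bounded-morphic preimages, so the refutation is preserved.

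\emph{Completeness, second step: adjusting to $C_{n,m}$.} It remains to fit $F$ to the prescribed shapes with $n\ge1$ and $m\ge2$. Put $m:=\max(k,2)$.

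If $n'\ge1$, map $C_{n',m}$ onto $F$ as follows: send $c_i$ to the $i$-th lower singleton, and send $t_0,\dots,t_{m-1}$ surjectively onto the top cluster. This map is monotone and satisfies the back condition, so it is a bounded morphism. Choosing the preimage $c_0$ of the root and pulling back the valuation refutes $\psi$ on $C_{n',m}$, by Proposition~\ref{prop:frame-invariance}.

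If $n'=0$, then $F$ is a single $k$-cluster. The subframe of $C_{1,m}$ generated by $t_0$ is an $m$-cluster, which maps surjectively, and hence as a bounded morphism, onto the $k$-cluster. Pulling back the valuation refutes $\psi$ at $t_0$ in $C_{1,m}$. Here we use that the theory of a frame requires truth at every point, not only at the root.

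So every $\psi\notin\Sfourthreecap$ fails on some $C_{n,m}$ with $n\ge1$ and $m\ge2$, and together with soundness this proves the corollary.
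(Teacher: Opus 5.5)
Your proof is correct and follows the paper's route: Bull's finite model property, passage to a generated subframe, Proposition~\ref{prop:chi2}, and pullback along a bounded morphism from a suitable $C_{n,m}$. Your uniform choice $m=\max(k,2)$ absorbs the paper's separate ``duplicate the top'' and one-point cases. In the one-cluster case you refute at $t_0$ via a generated subframe, whereas the paper sends $c_0$ to $w_0$ and the cap bijectively onto the cluster; both work.

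The optional filtration-plus-unravelling detour is unnecessary, and it would not work as stated. By Propositions~\ref{prop:chi2} and~\ref{prop:frame-invariance}, a finite frame whose non-maximal clusters are singletons validates $\chi_2$. So such a frame cannot map by a surjective bounded morphism onto a frame that has a non-maximal cluster of size at least $2$.
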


\begin{proof}
Every frame $C_{n,m}$ is a finite capped chain, so
Proposition~\ref{prop:chi2} yields
$C_{n,m}\satisfies \Sfourthreecap$.
Hence
$\Sfourthreecap$
is contained in the theory of
\[
\mathcal C=\{C_{n,m}:n\ge 1,\ m\ge 2\}.
\]
Conversely, let
$\varphi\notin \Sfourthreecap$.
By Bull's finite model property theorem for normal extensions of
$\SFourThree$ \cite{BullFMP}; see also \cite{CZ}, there is a finite Kripke model
$(F,V)$ and a world $w_0$ such that $F$ validates $\Sfourthreecap$ and
\[
(F,V,w_0)\not\satisfies \varphi.
\]
Replace $F$ by the generated subframe $F_{w_0}$ and restrict $V$ to it.
By generated-subframe preservation, the formula $\varphi$ still fails at
$w_0$.
Moreover, generated subframes preserve validity: given any valuation on
$F_{w_0}$, extend it arbitrarily to a valuation on $F$ and apply
generated-subframe preservation at every world of $F_{w_0}$.
Hence $F_{w_0}$ still validates $\Sfourthreecap$.
The generated frame $F_{w_0}$ is finite and rooted. By
Proposition~\ref{prop:chi2}, it is a possibly degenerate finite capped chain.
If the top cluster has size at least $2$ and there is at least one lower
singleton cluster, then
$F_{w_0}\cong C_{n,m}$
for some $n\ge 1$ and $m\ge 2$, and $\varphi$ already fails on a frame in
$\mathcal C$.

There remains the degenerate case in which $F_{w_0}$ is a single complete
cluster of size $m\ge 2$.  In that case take $C_{1,m}$, and define a surjective
bounded morphism
\[
\pi:C_{1,m}\to F_{w_0}
\]
by sending the root singleton of $C_{1,m}$ to the distinguished point $w_0$ and
mapping the top $m$-cluster bijectively onto $F_{w_0}$.  Pulling back $V$ along
$\pi$ gives a valuation on $C_{1,m}$ under which $\varphi$ fails at the root.

Suppose next that the top cluster has size $1$ and that $F_{w_0}$ has more than
one world.
Form a new frame $F'$ by duplicating the unique top world of $F_{w_0}$ and
leaving the rest of the frame unchanged.
Then
$F'\cong C_{n,2}$
for some $n\ge 1$.
The collapse map
$\pi:F'\to F_{w_0}$
is a bounded morphism.
By bounded-morphism invariance, pulling back $V$ along $\pi$ yields a
valuation $V'$ on $F'$ with
\[
(F',V',w_0)\not\satisfies \varphi.
\]

Finally, if $F_{w_0}$ has exactly one world, let
$F''=C_{1,2}$,
and let
$\pi:F''\to F_{w_0}$
be the constant map.
This is again a bounded morphism.
By bounded-morphism invariance, pulling back $V$ along $\pi$ yields a
valuation $V''$ on $F''$ such that
$\varphi$ fails at the designated root of $C_{1,2}$.
Thus $\varphi$ fails on some frame in $\mathcal C$, and the theory of
$\mathcal C$ is exactly
$\Sfourthreecap$.
\end{proof}

\begin{lemma}
\label{lem:chi2-sentential-emb}
Every world in the category of linear orders and order-embeddings validates
$\chi_2$ under sentential substitutions from any intermediate language
$\Lord\subseteq \mathcal L\subseteq \Lmord$.
\end{lemma}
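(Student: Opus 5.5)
Because $\Phi_2$ mentions only finitely many propositional variables and $\chi_2=\Phi_2\to\neg p_0$, I will argue by contradiction. Suppose $q,p_0,p_1$ are replaced by sentences $\sigma_q,\sigma_0,\sigma_1$ of $\mathcal L$, and some world $U$ in $\Cone(W)$ satisfies $\Phi_2\wedge\sigma_0$. Since the boxes in $\Phi_2$ range over $\Cone(U)\subseteq\Cone(W)$, we may work entirely in $\Cone(U)$.

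The key tool is the sentential dichotomy coming from Lemma~\ref{lem:upward-absolute-sentences}. By Theorem~\ref{thm:elimination-emb}, in the case $k=0$, every sentence of $\mathcal L$ is equivalent in $\LOemb$ to a modality-free sentence. Applying the sentence case again shows that $\possible\sigma$ is equivalent to false, to true, or to $\neg\theta_{m+1}$ for some $m$.

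First I apply this to $\sigma_0$. At $U$ we have $\sigma_0$ and $\necessary(\sigma_0\to\possible\sigma_1)$. Consider the set $X=\{V\in\Cone(U): V\satisfies\sigma_0\vee\sigma_1\}$. Every world of $X$ can reach both a $\sigma_0$-world and a $\sigma_1$-world, and the three labels are mutually exclusive, so no single world carries both. The plan is to contradict the requirement that $\sigma_q$-worlds see no $\sigma_0\vee\sigma_1$-world while every $X$-world sees a $\sigma_q$-world. This means getting from $X$ to the $\sigma_q$-region must be irreversible, whereas moving inside $X$ is reversible.

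Next I show that $\sigma_q$ is pure on $\Cone(U)$. If $V\satisfies\sigma_q$, then every $V'$ above $V$ satisfies $\neg\sigma_0\wedge\neg\sigma_1$, and hence $\sigma_q$ by the first conjunct of $\Phi_2$. So $\sigma_q$ is upward absolute on $\Cone(U)$ and holds somewhere there. Lemma~\ref{lem:upward-absolute-sentences}, applied with $U$ as base, then gives some $n$ with $\sigma_q\leftrightarrow\theta_n$ on $\Cone(U)$. Consequently $X$ is exactly the set of worlds in $\Cone(U)$ of size $<n$. In particular $U$ itself is finite, with $|U|<n$, and $n\ge 1$.

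The main step is to exploit the finiteness of $X$. The worlds in $X$ are finite linear orders of sizes $|U|,\dots,n-1$, so up to isomorphism there is exactly one world of each size. By the renaming lemma, Proposition~\ref{prop:renaming-lemma}, sentence truth depends only on this size. Embeddings strictly increase size unless they are isomorphisms. Hence inside $X$ the accessibility relation, up to isomorphism, is the strict chain of sizes, and the clusters are singletons. Now take the largest size $s<n$ realized in $\Cone(U)$ at which $\sigma_0\vee\sigma_1$ holds. That world satisfies, say, $\sigma_0$, and so must see a $\sigma_1$-world. Every world above it has either the same size, and then it is isomorphic and satisfies $\sigma_0$ again, or a larger size. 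A larger size is either $\ge n$, hence a $\sigma_q$-world, or $<n$ but beyond $s$, hence not in $X$ by maximality. Either way there is no $\sigma_1$-world above it, a contradiction. The case $\sigma_1$ is symmetric.

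I expect the delicate point to be the care needed in applying Lemma~\ref{lem:upward-absolute-sentences} relative to the base $U$ rather than $W$. I also need to check that parameter-free sentences are genuinely isomorphism-invariant; this is where the sentential restriction is essential.
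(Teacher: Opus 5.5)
Your proposal is correct and follows essentially the same route as the paper: you get upward absoluteness of the $q$-substitute from the first and fourth conjuncts, apply Lemma~\ref{lem:upward-absolute-sentences} at base $U$ to identify it with some $\theta_n$, and then take a maximal-size $p_0\vee p_1$-world, which by the second conjunct and the renaming lemma would have to be isomorphic to a world carrying the other label. Your preliminary remark about applying the sentential dichotomy to $\sigma_0$ is never used and can be dropped, and the claim that $\sigma_q$ holds somewhere in $\Cone(U)$ should cite the third conjunct at $U$ explicitly.
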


\begin{proof}
Let $U$ be any world of $\LOemb$, and let $A_0,A_1,B$ be the sentence
substitutions for $p_0,p_1,q$, respectively.  Suppose, towards a contradiction,
that $U\satisfies A_0\wedge\Phi_2$.  Since $U\satisfies A_0$ and the third boxed
conjunct of $\Phi_2$ holds throughout the cone above $U$, we have
$U\satisfies\possible B$.

The first and fourth boxed conjuncts make $B$ upward absolute on the cone above
$U$.  Indeed, the first boxed conjunct makes $B$, $A_0$, and $A_1$ exhaustive
and pairwise disjoint throughout that cone, while the fourth boxed conjunct
prevents a $B$-world from accessing an $A_0$- or $A_1$-world.  Since $B$ is
possible above $U$, Lemma~\ref{lem:upward-absolute-sentences} gives an $n$ such
that $B$ holds if and only if $\theta_n$ holds throughout the cone above $U$.
But $U\satisfies A_0$ and $A_0$ is disjoint from $B$, so $U\not\satisfies B$ and hence
$U$ is finite of size less than $n$.

Now consider the worlds $X\ge U$ satisfying $A_0\vee A_1$.  Each such $X$ is
not a $B$-world, and hence is finite of size less than $n$.  Choose such an $X$
of maximal size, with $X\satisfies A_i$ for some $i\in\{0,1\}$.  The second boxed
conjunct of $\Phi_2$ gives an extension $Y\ge X$ with $Y\satisfies A_{1-i}$.  Since
embeddings do not decrease size and $Y$ is again one of the finitely bounded
$A_0\vee A_1$-worlds, maximality gives $|Y|=|X|$.  Thus the embedding
$X\emb Y$ is an isomorphism of finite linear orders.  By the renaming lemma,
parameter-free sentences are invariant under this isomorphism, so
$Y\satisfies A_i$ as well.  This contradicts the disjointness of $A_0$ and $A_1$.
Therefore no such $U$ exists, and $\chi_2$ is valid at every world under
sentential substitutions.
\end{proof}

\begin{lemma}
\label{lem:capped-chain-labeling}
Let $W$ be a finite linear order of size $k$.
For every $n\ge 1$ and $m\ge 2$, the capped chain frame $C_{n,m}$ admits a
labeling above $W$ using only $\Lord$-sentences.
\end{lemma}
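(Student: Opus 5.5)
We label $C_{n,m}$ by sentences that track the size of the world in the cone above $W$. Every world in $\Cone(W)$ is a linear order containing a copy of $W$, so it has size at least $k$, and embeddings never decrease size. Worlds of sizes $k,k+1,\dots,k+n-1$ are therefore visited in increasing order and never revisited. This gives the chain of singleton clusters. The cap will be the infinite worlds, or equivalently all worlds of size at least $k+n$. On the cap we need an $m$-valued dial that can be freely reset. For that we reuse the dial from the proof of Theorem~\ref{thm:upper-S5-emb-sent}: the largest size, capped at a cutoff, of a discrete interval occurring in the order.

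Concretely, for $i<n$ set
\[
\Phi_{c_i}:=\theta_{k+i}\wedge\neg\theta_{k+i+1}.
\]
This says the world has exactly $k+i$ points. For $j<m$, set
\[
\Phi_{t_j}:=\theta_{k+n}\wedge d_j.
\]
Here $d_0,\dots,d_{m-1}$ is the dial $d_1,d_2,\dots,d_{\ge N}$ with $N=m$, re-indexed. Along the way we must check that each $d_j$ is a parameter-free $\Lord$-sentence. We then verify the three clauses of Definition~\ref{def:labelings-railyards}.

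\textbf{Clause (1).} $W$ has size $k$, so $W\satisfies\Phi_{c_0}$.

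\textbf{Clause (2).} The size conditions split the cone into the sets $\{|U|=k+i\}$ for $i<n$ and $\{|U|\ge k+n\}$. The dial values are exhaustive and pairwise exclusive in every linear order, so every world satisfies exactly one label.

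\textbf{Clause (3).} We check the possible labels above each kind of world.

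\emph{Above $c_i$.} From a world of size $k+i$ we can reach, by appending points at the end, a world of every finite size $k+i'$ with $i'\ge i$. So every $\Phi_{c_{i'}}$ with $i'\ge i$ is possible. We must also reach each cap label $\Phi_{t_j}$. As in Theorem~\ref{thm:upper-S5-emb-sent}, embed the world into a dense order without endpoints, which destroys all old discrete intervals. Then append a discrete block whose size realizes dial value $j$. This needs care in the case $j=0$, meaning no two adjacent points. It may also need care with endpoints: a single appended point adjacent to nothing, or appending a dense tail with a final point. The resulting world is infinite, so it satisfies $\theta_{k+n}$ (we use $n\ge1$, $m\ge2$ only for indexing). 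Conversely, $\Phi_{c_{i'}}$ with $i'<i$ is impossible because size cannot decrease.

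\emph{Above $t_j$.} From a cap world, the same dense-embedding-then-append construction reaches every $\Phi_{t_{j'}}$. Size stays at least $k+n$, so no $\Phi_{c_i}$ is reachable.

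The step most likely to be delicate is making the dial resettable from a cap world that is finite of size at least $k+n$, rather than infinite. This also matters for the bottom cluster transitions. The construction always passes to an infinite order, so it is fine. The only real check is that appending a finite discrete interval of size $s$ to a dense order without endpoints creates no larger discrete interval, and no interval of the wrong size, across the cut. The plan is to append it after a dense part with no maximum, and to handle the values $d_1$ and $d_{\ge N}$ separately.
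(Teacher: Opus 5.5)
Your proposal is correct and matches the paper's proof. The paper uses the same labels: exact sizes $\theta_{k+i}\wedge\neg\theta_{k+i+1}$ on the chain, and $\theta_{k+n}$ conjoined with the re-indexed discrete-interval dial on the cap. It verifies the clauses by the same argument, namely that size never decreases and that you embed into a dense order without endpoints and then add a finite discrete block (the paper prepends rather than appends, which is immaterial). The $d_1$ case you flag needs no added block at all, since a dense order without endpoints already contains no adjacent pair.
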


\begin{proof}
Suppose $|W|=k$.
Fix $n\ge 1$ and $m\ge 2$, and write
\[
C_{n,m}=\{c_0<\cdots<c_{n-1}\}\cup\{t_0,\dots,t_{m-1}\},
\]
with initial node $c_0$.

For the chain nodes, set
\[
\Phi_{c_i}:=\theta_{k+i}\wedge \neg\theta_{k+i+1}
\qquad (0\le i<n).
\]

For the cap, use the $m$-dial from the proof of
Theorem~\ref{thm:upper-S5-emb-sent}. Namely,
\[
d_1:=\neg\mathrm{Disc}_2,
\qquad
d_r:=\mathrm{Disc}_r\wedge \neg\mathrm{Disc}_{r+1}
\ (2\le r<m),
\qquad
d_{\ge m}:=\mathrm{Disc}_m.
\]
Define
\[
\Phi_{t_j}:=
\begin{cases}
\theta_{k+n}\wedge d_{j+1}, & 0\le j<m-1,\\[1mm]
\theta_{k+n}\wedge d_{\ge m}, & j=m-1.
\end{cases}
\]

We verify the labeling conditions.

First, since $|W|=k$, we have
\[
W\satisfies \theta_k\wedge\neg\theta_{k+1},
\]
so $W\satisfies \Phi_{c_0}$.

Second, let $U\ge W$.
Then $|U|\ge k$.
If $|U|=k+i$ for some $i<n$, then
$U\satisfies \Phi_{c_i}$,
and $U$ satisfies no other chain label and no cap label.
Otherwise $|U|\ge k+n$, so
$U\satisfies \theta_{k+n}$.
For fixed $m$, exactly one of
$d_1,\dots,d_{m-1},d_{\ge m}$
holds in $U$.
Thus $U$ satisfies exactly one of the cap labels
$\Phi_{t_0},\dots,\Phi_{t_{m-1}}$.

Third, we check that accessibility agrees with the frame order.

Suppose first that $U\satisfies \Phi_{c_i}$ and $i\le i'<n$.
Then $U$ is the finite linear order of size $k+i$, which embeds into the unique
finite linear order of size $k+i'$.
Hence some world above $U$ satisfies $\Phi_{c_{i'}}$.

Now suppose $U\satisfies \Phi_{c_i}$ and $v$ is any cap node.
By the dial construction from Theorem~\ref{thm:upper-S5-emb-sent}, we may first
embed $U$ into a world $U_1\ge U$ having no adjacent pairs and no endpoints, and
then prepend a finite discrete interval $D_r$ of any prescribed size $r\ge 1$.
Choosing $r$ so that the resulting dial value is the one used in the label of $v$,
we obtain an extension of $U$ satisfying $\theta_{k+n}$ together with that dial
value, and hence satisfying $\Phi_v$.

Now suppose $U\satisfies \Phi_{c_i}$ and $i'<i$. Every world satisfying
$\Phi_{c_{i'}}$ has size $k+i'<k+i=|U|$, so there is no embedding from $U$ into
any world satisfying $\Phi_{c_{i'}}$.
Hence $U\not\satisfies \possible\Phi_{c_{i'}}$.

If $U\satisfies \Phi_{t_j}$, then $|U|\ge k+n>k+i$ for every $i<n$, so there is no
embedding from $U$ into any world satisfying $\Phi_{c_i}$.
Thus $U\not\satisfies \possible\Phi_{c_i}$ for all $i<n$.

Finally, if $U\satisfies \Phi_{t_j}$ and $j'<m$, then by the same dial construction,
applied above $U$, there is an extension of $U$ satisfying the cap label
$\Phi_{t_{j'}}$.
Therefore every cap world accesses every cap world.

Combining these cases, for every node $v$ of $C_{n,m}$ we have:
a world $U\ge W$ satisfies $\possible\Phi_v$ if and only if the node already labeling $U$ is
below $v$ in $C_{n,m}$.
So the modalities align exactly with the frame order, and the displayed formulas
form a labeling of $C_{n,m}$ above $W$ using only $\Lord$-sentences.
\end{proof}

\begin{theorem}
\label{thm:finite-emb-sentential}
If $W$ is a finite world in $\LOemb$, then its sentential propositional modal
validities are exactly $\Sfourthreecap$.
More precisely, for every intermediate language
$\Lord\subseteq \mathcal L\subseteq \Lmord$,
\[
\Valsent_{\LOemb}(W,\mathcal L)=\Sfourthreecap.
\]
\end{theorem}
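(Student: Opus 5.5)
The proof splits into a lower bound and an upper bound, and both are assembled from results already established in this section.

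For the lower bound $\Sfourthreecap\subseteq\Valsent_{\LOemb}(W,\mathcal L)$, we pass through the Cone lemma (Proposition~\ref{prop:cone-lemma}). Its substitution language, namely the sentences of $\mathcal L$, is closed under substitution in the relevant sense. So it suffices to check that $\SFourThree$ and $\chi_2$ hold at every world of $\Cone(W)$ under every sentential substitution from $\mathcal L$.
\begin{itemize}
\item $\SFourThree$ at every world is Lemma~\ref{lem:S43-sentential-emb}. That lemma is stated for the parameter-free substitution class; here the substitution sentences are taken from the intermediate language $\mathcal L$, but the argument goes through unchanged, since Lemma~\ref{lem:upward-absolute-sentences} is already formulated for such $\mathcal L$.
\item $\chi_2$ at every world is Lemma~\ref{lem:chi2-sentential-emb}.
\end{itemize}
Since every world of the cone is itself a world of $\LOemb$, these pointwise validities pass to the normal closure. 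This gives $\Sfourthreecap\subseteq\Valsent_{\LOemb}(W,\mathcal L)$.

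For the upper bound, we first shrink to the smallest language: enlarging $\mathcal L$ only shrinks the set of validities, so it is enough to treat $\mathcal L=\Lord$. Now fix $\varphi\notin\Sfourthreecap$. By Corollary~\ref{cor:capped-chains-characterize}, there are $n\ge1$, $m\ge2$ and a valuation $V$ on $C_{n,m}$ such that $\varphi$ fails at some point of $C_{n,m}$.

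We want the failure at the initial node $c_0$. Every point of $C_{n,m}$ generates a subframe isomorphic to some $C_{n',m}$, unless the point lies in the cap, in which case it generates an $m$-cluster. In the cap case, re-root via the bounded morphism $C_{1,m}\to$ cluster, exactly as in the proof of that corollary. Alternatively, observe that the corollary's proof already produces a failure at the designated root. Either way, $\varphi$ fails at $c_0$ of some $C_{n,m}$ with $n\ge1$, $m\ge2$.

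Lemma~\ref{lem:capped-chain-labeling} then provides a labeling of $C_{n,m}$ above $W$ by $\Lord$-sentences, with $W$ satisfying the label $\Phi_{c_0}$. By the Labeling lemma (Proposition~\ref{prop:labeling-lemma}), substituting for each propositional variable the disjunction of the labels of the nodes where it holds under $V$ refutes $\varphi$ at $W$. These substitutions are Boolean combinations of $\Lord$-sentences, hence sentences of $\mathcal L$. So $\varphi\notin\Valsent_{\LOemb}(W,\mathcal L)$.

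The main obstacle is the bookkeeping at the root: the refuting frame must be rooted at the node that $W$ actually labels. Every other step is a direct citation of an earlier result.
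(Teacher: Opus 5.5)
Your proposal is correct and follows the paper's proof essentially step for step. The lower bound comes from the cone lemma applied to Lemma~\ref{lem:S43-sentential-emb} and Lemma~\ref{lem:chi2-sentential-emb}, and the upper bound from the labelings of Lemma~\ref{lem:capped-chain-labeling} combined with the labeling lemma and Corollary~\ref{cor:capped-chains-characterize}. You also make explicit that $\varphi$ must be refuted at the initial node $c_0$, which the paper leaves implicit since its corollary's proof already yields failure at the root. Your re-rooting via the bounded morphism from $C_{1,m}$ handles this correctly.
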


\begin{proof}
We prove both inclusions.

For the lower bound, Lemma~\ref{lem:S43-sentential-emb} gives
$\SFourThree$ throughout the cone, and
Lemma~\ref{lem:chi2-sentential-emb} gives $\chi_2$ throughout the cone for the
same sentential substitution languages.  By the cone lemma, the normal modal
logic generated by these principles is valid at $W$, giving the lower inclusion
$\Sfourthreecap\subseteq \Valsent_{\LOemb}(W,\mathcal L)$.

For the upper bound, Lemma~\ref{lem:capped-chain-labeling} yields labelings of
every frame $C_{n,m}$ with $n\ge 1$ and $m\ge 2$ above $W$ using
$\Lord$-sentences.
By the labeling lemma, every propositional modal formula not valid on all such
frames fails at $W$ under a sentential substitution.
By Corollary~\ref{cor:capped-chains-characterize}, the valid formulas on all
frames $C_{n,m}$ with $n\ge 1$ and $m\ge 2$ are exactly
$\Sfourthreecap$.
\end{proof}

\begin{theorem}
\label{thm:infinite-emb-sentential}
Let $W$ be a world in $\LOemb$.
Then, for every intermediate language
$\Lord\subseteq \mathcal L\subseteq \Lmord$,
\[
\Valsent_{\LOemb}(W,\mathcal L)=\SFive
\]
if and only if $W$ is infinite.
\end{theorem}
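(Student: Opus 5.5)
The proof splits into the two directions; the backward direction carries all the work.

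For the forward direction, suppose $W$ is finite. Theorem~\ref{thm:finite-emb-sentential} gives $\Valsent_{\LOemb}(W,\mathcal L)=\Sfourthreecap$. This is different from $\SFive$, because the capped chain $C_{1,2}$ validates $\Sfourthreecap$ but not $\SFive$: its root sees the cap, the cap does not see the root, so $\possible\necessary p\to p$ fails there. More concretely, $\theta_{|W|+1}$ is a sentential button that is unpushed at $W$, so the instance $\possible\necessary\theta_{|W|+1}\to\theta_{|W|+1}$ of $\axiomf{5}$ fails at $W$.

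For the backward direction, let $W$ be infinite. Theorem~\ref{thm:upper-S5-emb-sent} already gives $\Valsent_{\LOemb}(W,\mathcal L)\subseteq\SFive$, so it remains to show $\SFive\subseteq\Valsent$. Theorem~\ref{thm:S42-emb-all} gives $\SFour$. By the cone lemma it then suffices to verify the scheme $\possible\necessary\sigma\to\sigma$ at every world $U\in\Cone(W)$ and for every sentence $\sigma$ of $\mathcal L$. Every such $U$ is infinite, because embeddings are injective. The key step is to show that for sentences, possibility is symmetric among infinite worlds. Take a sentence $\sigma$ and use Theorem~\ref{thm:elimination-emb} to replace it by an equivalent modality-free $\Lord$-sentence $\beta$. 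Then $\possible\beta$ is equivalent in $\LOemb$ to false, to true, or to $\neg\theta_{m+1}$ for some $m$; this is the same computation as in the proof of Lemma~\ref{lem:upward-absolute-sentences}. On infinite worlds each of these options is constant. So for every sentence $\sigma$, the truth value of $\possible\sigma$ is the same at all infinite worlds.

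Now suppose $U\satisfies\possible\necessary\sigma$ but $U\not\satisfies\sigma$. Then $U\satisfies\possible\neg\sigma$. Choose $V\ge U$ with $V\satisfies\necessary\sigma$. Since $V$ is infinite, symmetry gives $V\satisfies\possible\neg\sigma$. This contradicts $V\satisfies\necessary\sigma$, so $U\satisfies\sigma$ and the $\axiomf{5}$ scheme holds on the cone.

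The main obstacle is the case where $\sigma$ carries modal subformulas from $\Lmord$. The elimination step handles this, but only if Theorem~\ref{thm:elimination-emb} is applied to all of $\sigma$ and not just its outer modality. Since the theorem proves full modality elimination by induction, it does apply to the whole sentence.
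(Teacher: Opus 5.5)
Your proof is correct. The forward direction is essentially the paper's: the paper observes that $\possible\necessary\theta_n$ holds at $W$ for every $n$, so $\axiomf{5}$ forces $W\satisfies\theta_n$, and your unpushed button $\theta_{|W|+1}$ is the contrapositive of this.

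For the backward direction you take a different and more direct route. The paper works through the button machinery. It takes a sentential weak button $b$ and upgrades it to a button using $\SFourTwo$ (Theorem~\ref{thm:S42-emb-all}). It then replaces $b$ by its pure, hence upward absolute, version and invokes Lemma~\ref{lem:upward-absolute-sentences} to identify it with a threshold $\theta_n$, which is already true at any infinite world.

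You instead apply the $k=0$ case of Theorem~\ref{thm:elimination-emb} directly to $\possible\beta$. You observe that the truth value of $\possible\sigma$ is the same at all infinite worlds, so sententially the infinite worlds behave like a single cluster. $\axiomf{5}$ then follows by a two-line contradiction.

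Both arguments rest on the same computation: $\possible$ of a modality-free sentence is false, true, or $\neg\theta_{m+1}$. Your version avoids $\SFourTwo$, purification and the upward-absoluteness lemma altogether. It also yields the stronger fact that any two infinite orders, even ones not related by embeddings, agree on every sentence of the form $\possible\sigma$. The paper's version fits the general principle that $\SFive$ holds exactly when every weak button is already pushed. It also reuses the threshold classification of pure buttons that drives Lemma~\ref{lem:S43-sentential-emb} and Lemma~\ref{lem:chi2-sentential-emb}.

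One small point: the cone lemma needs $\axiomf{5}$ under substitution closure, so the relevant $\sigma$ are really arbitrary sentences of $\Lmord$, not just of $\mathcal L$. Your argument already covers these, as you note at the end.
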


\begin{proof}
Suppose first that the sentential validities of $W$ are exactly
$\SFive$.
For each $n$, the sentence $\theta_n$ is upward absolute and possible above $W$.
Hence
$W\satisfies \possible\necessary\theta_n$.
Since $\SFive$ is valid at $W$, $\axiomf{5}$ yields
$W\satisfies \theta_n$.
Thus $W$ is infinite.

Conversely, assume $W$ is infinite.
Let $b$ be a sentential weak button at $W$.
By Theorem~\ref{thm:S42-emb-all}, $\SFourTwo$ is valid for sentence
instances, so $b$ is a button.
Replacing $b$ by its pure version, we may assume it is upward absolute.
Since $b$ is a weak button, some world above $W$ satisfies it, so
case~\textup{(i)} of Lemma~\ref{lem:upward-absolute-sentences} cannot occur.
Hence
$b\leftrightarrow \theta_n$ on the cone for some $n$.
Since $W$ is infinite, $W\satisfies \theta_n$, and therefore $W\satisfies b$.
Thus every sentential instance of $\axiomf{5}$ holds at $W$.
Every world above $W$ is also infinite, so the same argument applies on the
whole cone.
By the cone lemma, the normal closure of
$\SFour+\axiomf{5}=\SFive$ is valid at $W$.
The reverse inclusion is Theorem~\ref{thm:upper-S5-emb-sent}.
\end{proof}

\par\smallskip\noindent
We now turn to the full parameter language.  The following elementary finite-gap obstruction will be used.  Let
$P=\prod_{c\in C}L_c$ be a finite product of finite chains, ordered
coordinatewise, and let $B_0,\ldots,B_{n-1}$ be upward-closed subsets of $P$.
For $p\in P$, write
\[
I(p)=\{i<n:p\in B_i\}.
\]
Say that the family is necessarily independent above $p_0$ if $I(p_0)=\emptyset$
and, whenever $p\ge p_0$ and $I(p)\subseteq S\subseteq n$, there is $q\ge p$
with $I(q)=S$.

\begin{lemma}\label{lem:finite-gap-independent-buttons}
If $B_0,\ldots,B_{n-1}$ are necessarily independent upward-closed subsets of
$P=\prod_{c\in C}L_c$ above some point $p_0$, then $n\le |C|$.
\end{lemma}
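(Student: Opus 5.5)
The plan is to read the necessarily-independent family as pure buttons on the finite poset $P$, and to look at a point that is as high as possible while every button is still unpushed. From such a point, every one-step move upward already pushes some button. Each button can then be pushed alone, and pushing it alone forces a move along a coordinate that pushes only that button. So distinct buttons get distinct coordinates.

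\emph{Choosing the point.} If $n=0$ there is nothing to prove, so assume $n\ge1$. Let
\[
Z=\{p\in P: p\ge p_0,\ I(p)=\emptyset\}.
\]
This set is nonempty, since $p_0\in Z$ by the hypothesis $I(p_0)=\emptyset$. It is also finite, so we may fix a maximal element $p^\ast$ of $Z$. For each coordinate $c\in C$ at which $p^\ast_c$ is not the top of $L_c$, let $p^\ast+e_c$ be the point obtained by raising the $c$-coordinate of $p^\ast$ to its immediate successor in $L_c$. Since $p^\ast+e_c>p^\ast\ge p_0$, maximality of $p^\ast$ in $Z$ gives $I(p^\ast+e_c)\neq\emptyset$.

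\emph{Pushing one button at a time.} Fix $i<n$. Apply necessary independence at $p=p^\ast$ with $S=\{i\}$; this is allowed because $I(p^\ast)=\emptyset\subseteq\{i\}$. We obtain $q_i\ge p^\ast$ with $I(q_i)=\{i\}$. Since $I(q_i)\ne I(p^\ast)$, we have $q_i\neq p^\ast$, so some coordinate $c$ satisfies $(q_i)_c>p^\ast_c$. Choose such a coordinate and call it $c(i)$. Then $p^\ast+e_{c(i)}$ is defined, and $p^\ast+e_{c(i)}\le q_i$ coordinatewise. Because the sets $B_j$ are upward closed, $I$ is monotone, so
\[
\emptyset\neq I(p^\ast+e_{c(i)})\subseteq I(q_i)=\{i\}.
\]
Hence $I(p^\ast+e_{c(i)})=\{i\}$.

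\emph{Injectivity.} The set $I(p^\ast+e_c)$ depends only on $c$. If $c(i)=c(j)$, then $\{i\}=\{j\}$, so $i=j$. Thus $i\mapsto c(i)$ is an injection from $n$ into $C$, and $n\le|C|$.

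The only step needing care is the choice of $p^\ast$. The argument must start from a point where no single step can be taken without pushing a button; otherwise the points $q_i$ could all lie above a common first step and share a coordinate. This choice is exactly where finiteness of $P$ is used.
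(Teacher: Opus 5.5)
Your proof is correct and is essentially the paper's argument. You take a maximal point $p^\ast\ge p_0$ with empty pattern, and the one-coordinate covering step $p^\ast+e_{c(i)}\le q_i$ (the paper's ``first step of a saturated chain'') has pattern exactly $\{i\}$ by maximality and monotonicity, so $i\mapsto c(i)$ is injective.
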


\begin{proof}
Choose a maximal $p\ge p_0$ with $I(p)=\emptyset$.  For each $i<n$, necessary
independence gives $q_i\ge p$ with $I(q_i)=\{i\}$.  Take a saturated chain from
$p$ to $q_i$.  Since $p$ is maximal among points with empty pattern, the first
step of this chain already leaves the empty region.  That first step changes a
single coordinate, say $c_i$, and reaches a point $r_i\le q_i$.  Upwardness and
$I(q_i)=\{i\}$ imply $I(r_i)\subseteq\{i\}$, while maximality of $p$ gives
$I(r_i)\ne\emptyset$; hence $I(r_i)=\{i\}$.
If $c_i=c_j$, then the first step above $p$ in that coordinate is the same
point, so $r_i=r_j$ and therefore $\{i\}=I(r_i)=I(r_j)=\{j\}$.  Thus
$i\mapsto c_i$ is injective, and $n\le |C|$.
\end{proof}

\begin{theorem}
\label{thm:emb-S42-iff-adjacent}
Let $W$ be a world in $\LOemb$.
Then
\[
\Val_{\LOemb}(W,\Lmord(W))=\SFourTwo
\]
if and only if $W$ has infinitely many adjacent pairs.
\end{theorem}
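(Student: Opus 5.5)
The lower bound $\SFourTwo\subseteq\Val_{\LOemb}(W,\Lmord(W))$ is already Theorem~\ref{thm:S42-emb-all}, so both directions concern only the upper bound. For the ``if'' direction I would apply Proposition~\ref{prop:upper-bounds}(2). This requires arbitrarily large finite independent families of unpushed pure buttons, together with arbitrarily long dials independent from them, all in the parameter language.

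Given infinitely many adjacent pairs, I would pick $n$ pairs $(a_i,a_i^+)$ whose intervals $[a_i,a_i^+]$ are pairwise disjoint. This is possible by passing to every other pair, since adjacent pairs overlap at most in one point with their neighbours. The button $b_i$ is the parameter formula ``$(a_i,a_i^+)$ is nonempty''. It is false in $W$ and upward absolute under embeddings, so it is pure. Pushing any subfamily is done by inserting a point into exactly those gaps, which leaves the other gaps empty; hence the buttons are independent, and independence persists in every extension. For the dial I would reuse the sentences $d_1,\dots,d_{\ge N}$ from Theorem~\ref{thm:upper-S5-emb-sent}, with one modification: the dial must be changeable without disturbing the buttons. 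Rather than embedding the whole world into a dense one, I would relativize the dial to the final segment: set $d$ to the largest size of a discrete interval lying entirely above a fixed appended region. Concretely, I would first append a copy of $\Q$, or measure discrete intervals with the sentence relativized to the interval above all parameters.

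The cleanest version may be this. Define $\mathrm{Disc}_r$ relativized to the end segment $(\max\{a_i^+\},\infty)$ when that segment is nonempty; otherwise add a parameter-free prefix via an extension. Then density can be forced in the end segment by embedding it into a dense order, and a discrete block of prescribed size can be appended at the top, exactly as before. None of this touches the gaps $(a_i,a_i^+)$, so the dial is independent of the buttons.

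For the ``only if'' direction, suppose $W$ has only finitely many adjacent pairs. I would show that the control-statement hypothesis of Proposition~\ref{prop:upper-bounds}(2) fails, namely that there is a uniform bound on the size of independent families of pure buttons. The sharpness clause then gives a validity outside $\SFourTwo$. Take pure buttons $b_0,\dots,b_{n-1}$ with parameters $\bar a$, and enlarge $\bar a$ to contain all points of the finitely many adjacent pairs. By Theorem~\ref{thm:elimination-emb}, each $\necessary b_i(\bar a)$, and each statement ``$b_i$ is possible'', is a finite Boolean combination of interval-size conditions on the gaps $I_j(\bar a)$. Only the gaps that are finite in $W$ can grow with any effect. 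The key observation is that a finite nonempty gap $I_j(\bar a)$ contains adjacent pairs, namely its successive points, so the number of finite gaps is bounded by roughly the number of adjacent pairs plus the number of empty gaps. Each empty gap is itself determined by an adjacent pair among the parameters. Any other points of a finite gap cannot exist without creating adjacencies. Hence the relevant state space is a finite product of finite chains $E$, with $|C|$ bounded by the number $M$ of adjacent pairs of $W$ plus a constant, and the buttons are upward-closed subsets of it. Lemma~\ref{lem:finite-gap-independent-buttons} then gives $n\le|C|$.

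The main obstacle is making this bound independent of the chosen parameters $\bar a$. Adding parameters creates new gaps, but a gap with an infinite interval in $W$ is frozen at value $\infty$ and contributes nothing. Every gap that is finite in $W$ is either empty, in which case it comes from an adjacent pair of $W$, or finite nonempty, in which case it contains adjacent pairs of $W$. So the number of ``live'' coordinates is at most about $2M$, uniformly in $\bar a$, and the same bound applies in every world of the cone, since worlds above $W$ may have more adjacent pairs. To handle this I would apply the argument at $W$ itself, which suffices for the failure of the hypothesis at $W$.
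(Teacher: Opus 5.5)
Your proof is correct and has the same overall structure as the paper's; the converse direction, however, reaches its key reduction by a different route.

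\emph{Forward direction.} You do what the paper does: buttons $\exists z\,(a_i<z<a_i^+)$ on disjoint adjacent pairs, plus the discrete-interval dial relativized to a region that contains none of the button gaps. The paper fixes a point $c$ and puts the dial on the side opposite the infinitely many pairs. Your end segment above $\max_i a_i^+$ works equally well and avoids choosing a side.

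\emph{Converse direction.} The paper uses Lo\'s--Tarski over $T_W=\Th(\mathrm{LO})\cup\diag(W)$ to turn each pure button into an existential $\LredA{W}$-formula. It then analyses completed order-patterns cell by cell, so that realizability becomes a threshold condition on a gap set $C$ fixed by $W$ alone. Uniformity in the parameters is therefore built in from the start.

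You instead use purity to write $b_i\leftrightarrow\neg\possible\neg b_i$ on the cone. The normal form of Theorem~\ref{thm:elimination-emb} then shows directly that $b_i$ is an upward-closed condition on the gap vector of the parameter tuple. Because your coordinate set now depends on $\bar a$, you have to bound the live coordinates separately, and you do this correctly:
\begin{enumerate}[label=\textup{(\roman*)}]
\item gaps that are infinite in $W$ are frozen at $\infty$;
\item each gap that is finite in $W$ is either empty, and so is an adjacent-pair gap or an end-gap, or else contains adjacent pairs of $W$.
\end{enumerate}
After your enlargement of $\bar a$ by all points of adjacent pairs, the second case in (ii) cannot occur.

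Your route is shorter and needs neither a preservation theorem nor the assignment combinatorics. The paper's route has the advantage that its coordinates are independent of the buttons from the outset.

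Some details to tidy up:
\begin{enumerate}[label=\textup{(\roman*)}]
\item The uniform bound is $M+2$, not ``about $2M$''. A singleton has $M=0$ but two live end-gaps.
\item The clause about adding a prefix when $(\max_i a_i^+,\infty)$ is empty is unnecessary, and could not be carried out at $W$ anyway. The relativized dial is a legitimate parameter formula at $W$ regardless, and every extension can insert points into that segment.
\item You should check the hypothesis of Lemma~\ref{lem:finite-gap-independent-buttons} explicitly, as the paper does. Realize each truncated gap vector by inserting points into the live gaps, then apply independence of the buttons in the cone.
\end{enumerate}
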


\begin{proof}
The proof is guided by the following picture.  Infinitely many adjacent pairs
supply independent buttons, while a separate infinite side of the order supplies
finite dials.  Conversely, if there are only finitely many adjacent pairs, then
only finitely many gap coordinates can support pure buttons.

Assume first that $W$ has infinitely many adjacent pairs.
Choose $c\in W$ so that one side of $c$ contains infinitely many adjacent pairs.
From that side choose, for each prescribed finite size, pairwise endpoint-disjoint
adjacent pairs.  This is possible since, after choosing finitely many adjacent
pairs, only finitely many further adjacent pairs are ruled out by sharing an
endpoint with one of them: every point is an endpoint of at most two adjacent
pairs.  Since the side contains infinitely many adjacent pairs, one can continue
until the prescribed finite size is reached.  If this side is $(c,\infty)$, take
the buttons from adjacent pairs above $c$ and relativize the finite dials from
Theorem~\ref{thm:upper-S5-emb-sent} to the initial segment $(-\infty,c)$.  If
the infinite side is $(-\infty,c)$, take the buttons from adjacent pairs below
$c$ and relativize those dials to the final segment $(c,\infty)$.
For each chosen adjacent pair $a<b$ on that side, let
\[
\beta_{a,b}:=\exists z\,(a<z<b).
\]
Each $\beta_{a,b}$ is a pure button: once the interval $(a,b)$ becomes nonempty
it remains nonempty under further embeddings.
Buttons supported on endpoint-disjoint adjacent pairs are independent.  The dials are
supported on the opposite side of $c$ and are therefore independent from the
buttons.  By Theorem~\ref{thm:S42-emb-all} and
Proposition~\ref{prop:upper-bounds}(2), we conclude
$\Val_{\LOemb}(W,\Lmord(W))=\SFourTwo$.

Conversely, suppose
$\Val_{\LOemb}(W,\Lmord(W))=\SFourTwo$.
By the sharpness clause in Proposition~\ref{prop:upper-bounds}(2),
$W$ admits arbitrarily large finite independent families of unpushed pure buttons
with parameters, together with arbitrarily long finite dials.
Assume towards a contradiction that $W$ has only finitely many adjacent pairs,
and hence only finitely many relevant insertion gaps.
If $W$ is empty, let $C=\{I_*\}$ consist of one formal whole-order gap.  If
$W$ is nonempty, let $C$ be the finite set consisting of the open gaps
$(x,y)$ for adjacent pairs $x<y$ of $W$, together with the end-gaps
$(-\infty,\min W)$ if $W$ has a minimum and $(\max W,\infty)$ if
$W$ has a maximum.

Fix an independent family of pure buttons
$\langle b_i(\bar a_i):i<n\rangle$ over $W$.
We view each accessible world as an extension of $W$ along the chosen
embedding, replacing it by an isomorphic copy over $W$ when necessary.
Thus the gap-size vector below is always computed relative to the fixed copy
of $W$.
For each world $U\ge W$ and each gap $I\in C$, let $I^U$ be the corresponding
interval in $U$; in the empty case, put $I_*^U=U$.  Set
\[
\vec\ell(U)=(\ell_I(U))_{I\in C}\in (\omega\cup\{\infty\})^C,
\]
where $\ell_I(U)=|I^U|$ if finite and $\ell_I(U)=\infty$ otherwise.
If $U\emb V$, then $\vec\ell(U)\le \vec\ell(V)$ coordinatewise.

Working in the language naming the elements of $W$, let
\[
T_W:=\Th(\mathrm{LO})\cup \diag(W),
\]
where $\diag(W)$ is the full atomic diagram of $W$.
By modality elimination for embeddings, each pure button $b_i(\bar a_i)$ is
equivalent over $\LOemb$ to a non-modal formula of $\LredA{W}$.
Because $b_i(\bar a_i)$ is pure, it is upward absolute on $\Cone(W)$.
Hence whenever
\[
U\subseteq V\satisfies T_W
\]
and
$U\satisfies b_i(\bar a_i)$,
the inclusion
$U\emb V$
shows that
$V\satisfies b_i(\bar a_i)$.
Since $T_W$ is universal, the Lo\'s--Tarski preservation theorem, applied over
$T_W$, yields an existential $\LredA{W}$-formula
\[
\exists \bar x\,\psi_i(\bar x,\bar a_i)
\]
equivalent to $b_i(\bar a_i)$ over $T_W$, with $\psi_i$ quantifier-free.

Write $\psi_i$ in disjunctive normal form and complete each satisfiable
conjunction to a full order-pattern over the parameters $\bar a_i$ and the
finitely many named elements of $W$ occurring in that conjunction.  We record
why each completed pattern is controlled by finitely many thresholds on the
coordinates in $C$.

Fix one completed pattern, and let $A\subseteq W$ be the finite set of named
points of $W$ occurring in it.  Consider the cells determined by $A$: if
$A=\emptyset$ there is one whole-order cell, while otherwise these are the two
open endpoint cuts and the open intervals between consecutive points of $A$.  We include
such a cell even when its intersection with $W$ is empty, since inserted
witnesses may still lie there.  Variables identified with members of $A$ impose
no insertion requirements.  In one such cell $J$, suppose that
$h$ variable-equivalence
classes of the pattern are required to lie strictly inside $J$.  If
$J\cap W$ has at least $h$ elements, then these classes can be realized by
points of $W$ itself, in the order prescribed by the pattern, so $J$ imposes no
condition on inserted points.  If $J\cap W$ has fewer than $h$ elements, then
$J\cap W$ is finite.  List all points of $J\cap W$ in increasing order, together
with the endpoints of $J$ from $A$ when such endpoints are present.  The
complementary gaps inside $J$ are then precisely gaps belonging to $C$: gaps
coming from adjacent pairs of $W$, endpoint gaps when the corresponding endpoint
exists, and in the case $W=\emptyset$ the single formal gap $I_*$.  To realize
the $h$ ordered classes in $J$, one chooses which of them are assigned to the
finitely many available points of $J\cap W$ and how many are assigned to each of
these complementary gaps.  There are only finitely many such choices, and each
choice is realized in a world $U\ge W$ exactly when each involved gap $I\in C$
contains at least the prescribed finite number of inserted points.  Conversely,
when those lower bounds are met, the chosen assignment realizes the required
part of the order-pattern in $J$.

Taking the finite product over the cells and the finite union over the possible
assignments gives, for this completed pattern, a finite family of threshold
vectors in $\omega^C$.  The pattern is realized in $U$ if and only if
$\vec\ell(U)$ is coordinatewise above one of those vectors.  Since there are
only finitely many completed patterns in the disjunctive normal form, there are
finitely many threshold vectors
$t_i^{(1)},\dots,t_i^{(r_i)}\in\omega^C$
such that for all
$U\ge W$,
\[
U\satisfies b_i(\bar a_i)
\qquad\text{if and only if}\qquad
\vec\ell(U)\ge t_i^{(j)}\text{ for some }j\le r_i,
\]
where $\infty$ is understood as $\ge n$ for every $n<\omega$.
In particular, the set of vectors forcing $b_i$ is upward closed in
$(\omega\cup\{\infty\})^C$.

Let $N$ be at least every finite coordinate appearing in any of the finitely
many threshold vectors $t_i^{(j)}$.  Truncate vectors coordinatewise by
\[
\widehat v_I=\min(v_I,N),
\]
where $\min(\infty,N)=N$, and let $P_N=\{0,\ldots,N\}^C$.  The threshold
description induces upward-closed subsets $B_i\subseteq P_N$ such that, for
every $U\ge W$,
\[
U\satisfies b_i(\bar a_i)
\qquad\text{if and only if}\qquad
\widehat{\vec\ell(U)}\in B_i.
\]
These subsets are necessarily independent above the zero vector.  Indeed, given
$p\in P_N$, realize $p$ by an extension $U_p\ge W$ inserting exactly $p_I$ new
points in each gap $I\in C$; when $W=\emptyset$, this simply means that $U_p$
has $p_{I_*}$ points.  The value $N$ merely means that we insert $N$ points,
which is enough for all thresholds under consideration.  If $I(p)$ is the set of
buttons true at $U_p$ and $I(p)\subseteq S\subseteq n$, independence of the
buttons in the cone above $W$ gives an extension $V\ge U_p$ in which exactly the
buttons in $S$ are true.  Then
$\widehat{\vec\ell(V)}\ge p$ and lies in exactly the sets $B_i$ with $i\in S$.
By Lemma~\ref{lem:finite-gap-independent-buttons}, we have $n\le |C|$.
Since the sharpness assumption provides such independent families for
arbitrarily large $n$, this is impossible when $C$ is finite.  Thus $W$ must
have infinitely many adjacent pairs.
\end{proof}

\begin{theorem}
\label{thm:emb-S5-iff-DLO}
For a world $W$ in $\LOemb$,
\[
\Val_{\LOemb}(W,\Lmord(W))=\SFive
\]
if and only if $W$ is a nonempty dense linear order without endpoints.
\end{theorem}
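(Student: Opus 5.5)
We prove the two directions separately, using modality elimination (Theorem~\ref{thm:elimination-emb}) to reduce everything to existential closedness.

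\emph{($\Leftarrow$).} Assume $W$ is a nonempty dense linear order without endpoints. We verify the $\axiomf{5}$ scheme $\possible\necessary\psi[\bar a]\to\psi[\bar a]$ at every world of $\Cone(W)$; every such world is an extension $U\supseteq W$, and the parameters are transported from $W$.

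The key observation is that $W$ is existentially closed among linear orders. By Theorem~\ref{thm:elimination-emb}, $\psi$ may be taken modality-free. Any extension $V\supseteq W$ embeds over $W$ into an elementary extension of $W$. Indeed, $\mathrm{DLO}$ has quantifier elimination, and every finite configuration over $W$ is realizable in $W$ itself. Existential closedness follows.

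Now suppose $U\satisfies\possible\necessary\psi[\bar a]$, witnessed by $U\emb V$. Using Theorem~\ref{thm:S42-emb-all}, amalgamate $V$ with an elementary extension $W^*\succ W$ over $U$, or more simply over the parameters. Then $\psi[\bar a]$ holds in a common extension of $V$. It remains to transfer $\psi[\bar a]$ back to $U$, and here we must use that $\bar a$ lies in $W$.

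The cleaner route is the following. The possibility $\possible\necessary\psi[\bar a]$, viewed as a modality-free formula about $\bar a$, is by Theorem~\ref{thm:elimination-emb} a disjunction of upper bounds on the sizes of the intervals cut out by $\bar a$. In $W$ every such interval is infinite, so it cannot meet any finite upper bound. Hence the only disjuncts true at $\bar a$ are vacuous ones, and these are true everywhere on the pattern.

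Concretely, apply the elimination to $\necessary\psi=\neg\possible\neg\psi$. On the order-pattern of $\bar a$, the formula $\possible\neg\psi$ is a disjunction of upper bounds. Either it contains the vacuous bound, in which case $\possible\neg\psi$ holds throughout the pattern, or all its bounds are finite, in which case it fails in $W$ and in every extension. So whether $\necessary\psi[\bar a]$ holds at a world above $W$ depends only on the pattern of $\bar a$, and the pattern is the same in all worlds. Hence $\possible\necessary\psi[\bar a]$ implies $\necessary\psi[\bar a]$ at $U$, and by reflexivity this gives $\psi[\bar a]$. With the cone lemma (Proposition~\ref{prop:cone-lemma}), this yields $\SFive$. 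The reverse inclusion $\Val\subseteq\SFive$ comes from Theorem~\ref{thm:upper-S5-emb-sent}.

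\emph{($\Rightarrow$).} Assume $\SFive$ holds with parameters. We use buttons to rule out each failure.

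If $W$ is empty, then $\theta_1$ is a pushable but unpushed button, so $\axiomf{5}$ fails. If $W$ contains an adjacent pair $a<b$, then $\beta_{a,b}=\exists z\,(a<z<b)$ is a pure button that is possible but false at $W$, so $\possible\necessary\beta_{a,b}\wedge\neg\beta_{a,b}$ refutes $\axiomf{5}$. If $W$ has a minimum $a$, then $\exists z\,(z<a)$ plays the same role, and symmetrically for a maximum. Hence $W$ is nonempty, dense, and without endpoints.

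The main obstacle is the ($\Leftarrow$) direction: showing that necessity of a formula at $\bar a$ is determined by the order-pattern of $\bar a$ in worlds where all the relevant intervals are infinite. To handle it, we reapply Theorem~\ref{thm:elimination-emb} to $\necessary\psi$ itself, and use the fact that in an extension of a $\mathrm{DLO}$ the intervals between parameters from $W$ remain infinite.
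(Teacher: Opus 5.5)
Your proof is correct, but it takes a different route from the paper. The paper argues by citation. It takes $\Val\subseteq\SFive$ from Theorem~\ref{thm:upper-S5-emb-sent}, and then invokes the existential-closedness criterion recalled at the end of Section~\ref{sec:modal-validities-Kripke-category}: in a category with modality elimination, a world validates $\SFive$ with parameters exactly when it is existentially closed. It combines this with the classical fact that the existentially closed linear orders are the nonempty dense orders without endpoints; the empty order fails because it embeds into a singleton.

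You instead prove both halves of that criterion directly in this instance.
\begin{itemize}
\item For ($\Leftarrow$) you use the explicit normal form of Theorem~\ref{thm:elimination-emb}, not merely the fact that modalities can be eliminated. Every interval cut out by a tuple from a nonempty dense order without endpoints is infinite, and stays infinite along any embedding. So every disjunct imposing a finite bound fails throughout the cone. Hence $\possible\neg\psi[\bar a]$, and so $\necessary\psi[\bar a]$, has constant truth value on $\Cone(W)$, and $\axiomf{5}$ follows.
\item Your dichotomy is in fact exact. The maximal elements of $S$ form an antichain, so if the all-$\infty$ vector occurs it is the only disjunct.
\item For ($\Rightarrow$) you exhibit explicit pure buttons that refute $\axiomf{5}$: $\theta_1$, $\exists z\,(a<z<b)$, and $\exists z\,(z<a)$ together with its dual. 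This half needs no modality elimination at all.
\end{itemize}

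The paper's route is shorter and places the result within the general correspondence between existential closedness and the maximality principle. Yours is self-contained relative to results actually proved in the paper, and it shows concretely which buttons obstruct $\SFive$.

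One cleanup is needed. The paragraph asserting that $W$ is existentially closed is never used. The attempted amalgamation over $U$ stops short of transferring $\psi[\bar a]$ back to $U$, as you yourself note. Delete both and let the ``cleaner route'' stand as the proof.
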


\begin{proof}
The upper inclusion always holds: full parameter-language validities are contained
in the sentential validities obtained by using parameter-free $\Lord$-sentences,
and Theorem~\ref{thm:upper-S5-emb-sent} puts those sentential validities inside
$\SFive$.  For the lower inclusion, use
Theorem~\ref{thm:elimination-emb}: the category $\LOemb$ admits modality
elimination.  By the existential-closedness criterion just recalled, a world
validates $\SFive$ in its full parameter language exactly when it is
existentially closed as a linear order.  Since empty orders are allowed in the ambient category, the empty order is not
existentially closed: it embeds into a singleton, which realizes the existential
sentence $\exists x\,x=x$.  The existentially closed worlds are therefore
exactly the nonempty dense linear orders without endpoints.
\end{proof}

Thus, under embeddings, nonempty dense endpoint-free orders are precisely the worlds at
which every possibly necessary parameter assertion is already true.  Orders
with infinitely many adjacent pairs sit lower in the modal hierarchy: their
parameter logic is exactly directedness, namely $\SFourTwo$.

\section{The modal theory of linear orders and monotone maps}

For monotone maps the lower bound is again $\SFourTwo$, but the upper behavior
is different. Monotone maps may collapse intervals, and this makes sentences much
less sensitive to the internal shape of a nonempty order. Sententially, every
nonempty world validates $\SFive$. Parameters, however, can mark points whose
relative behavior survives collapse, and so the parameter language distinguishes
singleton worlds from nonempty infinite worlds. Let $\LOmon$ denote the Kripke
category of linear orders and monotone maps.

\begin{theorem}
\label{thm:S42-mon-all}
Every world in $\LOmon$ validates $\SFourTwo$ for arbitrary substitution
instances, even with parameters.
\end{theorem}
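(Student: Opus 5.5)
The plan is to apply Theorem~\ref{thm:S42-lower-bound} with $A=W$: it suffices to show that every span of monotone maps in $\LOmon$ is amalgamable over the parameters. Concretely, given monotone maps $f_0:V\to U_0$ and $f_1:V\to U_1$, I will find a linear order $U$ and monotone maps $g_0:U_0\to U$, $g_1:U_1\to U$ with $g_0f_0=g_1f_1$ on all of $V$. Full commutativity is stronger than $h[A]$-amalgamability for any parameter set, and the cone above $W$ consists of worlds of the category, so this gives the hypothesis of Theorem~\ref{thm:S42-lower-bound} for every $A\subseteq W$.

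The amalgam comes from the same collapsing trick used in Theorem~\ref{thm:elimination-mon}. If $V$ is empty, any cocone works: take $U=U_0+U_1$, the ordered sum, with the two inclusions, and the commutativity condition is vacuous. If $V$ is nonempty, both $U_0$ and $U_1$ are nonempty. Take $U$ to be a singleton and let $g_0,g_1$ be the constant maps. Constant maps are monotone, and $g_0f_0=g_1f_1$ holds trivially because both composites are the unique map $V\to U$. So every span is amalgamable, indeed with a commuting square.

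The only point needing care is whether Theorem~\ref{thm:S42-lower-bound} really needs nothing beyond this, namely amalgamation for spans rooted at arbitrary worlds $V$ in the cone. The definition of an $A$-amalgamable cone quantifies over all $h:W\to V$ and all spans rooted at $V$. The construction above is uniform in $V$, so this is satisfied. The empty-world case is the only degenerate one, and I handle it by the sum construction. If one preferred, one could also note that the empty order is initial, so any span rooted at it is amalgamable via the sum.

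I expect no real obstacle. The main thing to verify is that the singleton target really is a world of $\LOmon$. It is, since objects are all linear orders and arrows are all monotone maps; no surjectivity is required, unlike in $\LOcond$.
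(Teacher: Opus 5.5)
Your proposal is correct and is essentially the paper's argument: you amalgamate every span at the singleton order by constant maps, getting a commuting square, and then apply Theorem~\ref{thm:S42-lower-bound}. Your separate treatment of empty $V$ via $U_0+U_1$ is harmless but unnecessary. The singleton is terminal in $\LOmon$ even for empty domains, because the empty map is the unique map into it, so the singleton amalgamates every span uniformly.
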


\begin{proof}
Every span in $\LOmon$ is amalgamable because the singleton order is terminal.
Indeed, if
$L_0\xleftarrow{}L\xrightarrow{}L_1$
is a span, let $\{s\}$ be the singleton order.
There is a unique monotone map from each $L_i$ to $\{s\}$, and uniqueness makes
the square commute.
So Theorem~\ref{thm:S42-lower-bound} yields $\SFourTwo$.
\end{proof}

\begin{theorem}
\label{thm:upper-S5-mon-all}
If $W$ is a nonempty world in $\LOmon$, then for every intermediate language
\[
\Lord\subseteq \mathcal L\subseteq \Lmord(W),
\]
one has
\[
\Val_{\LOmon}(W,\mathcal L)\subseteq \SFive.
\]
\end{theorem}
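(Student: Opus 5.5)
Since enlarging the substitution language can only shrink the set of valid propositional formulas, it suffices to treat $\mathcal L=\Lord$, i.e.\ parameter-free sentences. By Proposition~\ref{prop:upper-bounds}(1), the plan is then to exhibit, for each $N\ge 2$, an $N$-valued dial of $\Lord$-sentences at $W$: sentences $d_1,\dots,d_N$ such that, throughout $\Cone(W)$, exactly one $d_i$ holds and every $d_i$ is possible.

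For the dial I will use the size of the order. Put
\[
d_j:=\theta_j\wedge\neg\theta_{j+1}\quad(1\le j<N),
\qquad
d_N:=\theta_N .
\]
Consider a world $U$ in $\Cone(W)$. Because $W$ is nonempty and there is no function from a nonempty set into the empty set, $U$ is nonempty. Hence $U$ satisfies $\theta_1$, and so exactly one of $d_1,\dots,d_N$ holds at $U$. This is the point where nonemptiness is needed: the empty order would otherwise satisfy none of the values.

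To see that every value is possible at $U$, I will send $U$ by a monotone map onto the finite chain $\{0<\dots<j-1\}$ of size $j$ for $j<N$, and onto a chain of size $N$ for $d_N$. A constant map into the target already suffices, because monotone maps need not be surjective. Each target is a world of the category and lies in $\Cone(W)$ by composing arrows. So every $d_j$ is possible from every $U$, which gives the dial condition throughout the cone. Proposition~\ref{prop:upper-bounds}(1) then yields $\Val_{\LOmon}(W,\mathcal L)\subseteq\SFive$.

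I do not expect a real obstacle here. The only step that needs care is the nonemptiness check just described, which ensures that $d_1$ genuinely covers all small worlds.
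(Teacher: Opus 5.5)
Your proof is correct and follows the paper's argument: Proposition~\ref{prop:upper-bounds}(1) applied to a sentential $\Lord$-dial, using that every world in the cone is nonempty and that constant maps reach every nonempty finite target. The only difference is the dial. The paper reuses the $\mathrm{Disc}_n$ dial from the embedding case, whereas your size dial $\theta_j\wedge\neg\theta_{j+1}$ works here directly because monotone maps, unlike embeddings, can shrink an order.
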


\begin{proof}
The sentential dial from the proof of
Theorem~\ref{thm:upper-S5-emb-sent} is available in $\LOmon$ as well.
Indeed, every nonempty linear order admits a monotone map to every nonempty
target order: choose any point of the target and take the constant map.
In particular, each of the target orders used there is accessible from $W$.
Since the dial values already lie in the base language
$\Lord\subseteq \mathcal L$, Proposition~\ref{prop:upper-bounds}(1) yields the
claimed inclusion $\Val_{\LOmon}(W,\mathcal L)\subseteq \SFive$.
\end{proof}

\begin{theorem}
\label{thm:nonempty-mon-sentential}
If $W$ is a nonempty world in $\LOmon$, then for every intermediate language
$\Lord\subseteq \mathcal L\subseteq \Lmord$,
\[
\Valsent_{\LOmon}(W,\mathcal L)=\SFive.
\]
\end{theorem}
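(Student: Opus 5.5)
Two inclusions are needed. The upper inclusion $\Valsent_{\LOmon}(W,\mathcal L)\subseteq\SFive$ comes from the sentential dial in the proof of Theorem~\ref{thm:upper-S5-mon-all}. Its values are parameter-free $\Lord$-sentences, so Proposition~\ref{prop:upper-bounds}(1) applies equally to sentential substitutions. For the lower inclusion, Theorem~\ref{thm:S42-mon-all} already gives $\SFour$ for all substitutions. By the cone lemma it then suffices to verify every sentential instance of $\axiomf{5}$, in the form $\possible\necessary\varphi\to\varphi$, at every world of $\Cone(W)$, for every sentence $\varphi$ of $\mathcal L$.

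The key observation is that every world in the cone above a nonempty $W$ is nonempty, since a function out of a nonempty order has nonempty codomain. Any two nonempty linear orders are mutually accessible in $\LOmon$, because every nonempty order maps to every nonempty order by a constant map. So take $U\in\Cone(W)$ with $U\satisfies\possible\necessary\varphi$, witnessed by $f:U\to V$ with $V\satisfies\necessary\varphi$. Then $V$ is nonempty and lies in $\Cone(W)$. A constant map $V\to U$ shows that $U$ is accessible from $V$, so $U\satisfies\varphi$. No parameters are transported, so only sentences are involved and the valuation is irrelevant.

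This is essentially the argument of Theorem~\ref{thm:terminal-S5}(1), localized to the full subcategory of nonempty orders. One could instead apply that theorem directly to a terminal singleton world and pass through the cone, but the direct mutual-accessibility argument is cleaner. The only point needing care is that the semantics of $\possible$ and $\necessary$ are evaluated inside $\Cone(W)$, so we must check that the cone consists exactly of the nonempty orders. Every nonempty order is reached from $W$ by a constant map, and the empty order is not reachable, so satisfaction of modal sentences at $U$ is unaffected by the choice of base world. I expect no serious obstacle. The main subtlety is the role of nonemptiness: the empty order has no maps to it from nonempty orders, which is exactly why the hypothesis is needed and why the sentence case of Theorem~\ref{thm:elimination-mon} has the extra ``empty order'' normal form.
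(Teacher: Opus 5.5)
Your proof is correct and follows the paper's route. The upper bound comes from the sentential dial via Proposition~\ref{prop:upper-bounds}(1), and your mutual-accessibility argument for $\axiomf{5}$ is exactly the proof of Theorem~\ref{thm:terminal-S5}(1), which the paper simply cites. That theorem applies to $W$ directly, with no localization to nonempty orders or detour through a singleton, because even the empty order maps to $W$ by the empty function.
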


\begin{proof}
Every nonempty linear order admits a monotone map to every other nonempty linear
order, so every nonempty world is weakly terminal in $\LOmon$.
Hence Theorem~\ref{thm:terminal-S5}(1) gives the lower bound
$\SFive$ sententially.
For the reverse inclusion, use the same sentential dial as in the proof of
Theorem~\ref{thm:upper-S5-mon-all}. Since its values already lie in
$\Lord\subseteq \mathcal L$, Proposition~\ref{prop:upper-bounds}(1), applied
to the sentence-substitution language, gives the reverse inclusion
$\Valsent_{\LOmon}(W,\mathcal L)\subseteq \SFive$.
\end{proof}

\begin{theorem}
\label{thm:singleton-mon-formulaic}
If $W$ is a singleton world in $\LOmon$, then
\[
\Val_{\LOmon}(W,\Lmord(W))=\SFive.
\]
\end{theorem}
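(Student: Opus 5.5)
We prove the two inclusions separately. The upper inclusion $\Val_{\LOmon}(W,\Lmord(W))\subseteq\SFive$ is already Theorem~\ref{thm:upper-S5-mon-all}, since a singleton is nonempty. So all the work is in the lower bound: $\SFive$ is valid at $W$ for substitutions with parameters from $W$.

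For the lower bound we apply Theorem~\ref{thm:terminal-S5}(2) to the singleton world $W=\{s\}$. First we check its two hypotheses.
\begin{enumerate}[label=\textup{(\alph*)}]
\item Every world of $\LOmon$ maps to $W$. Indeed, the constant function into $\{s\}$ is monotone, and for the empty order it is the empty function.
\item Every map into $W$ is unique, because there is exactly one function from any set into a singleton.
\end{enumerate}
Hence $W$ is terminal in $\LOmon$, and Theorem~\ref{thm:terminal-S5}(2) gives that $W$ validates $\SFive$ for substitutions with parameters from $W$, that is, for all of $\Lmord(W)$.

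We should also confirm that the proof of Theorem~\ref{thm:terminal-S5}(2) gives validity in the sense of Definition~\ref{def:propositional-validities}. That proof verifies the $\axiomf{5}$ scheme at every world $U$ of the cone above $W$, for parameters transported along $h:W\to U$. These transported parameters are exactly how $\Lmord(W)$-substitutions are interpreted at accessible worlds. Together with $\SFour$, which holds in every Kripke category, the cone lemma (Proposition~\ref{prop:cone-lemma}) then yields the normal closure $\SFive\subseteq\Val_{\LOmon}(W,\Lmord(W))$.

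The only point needing any care is this bookkeeping about parameters along the cone. Once terminality is noted, the rest is immediate.
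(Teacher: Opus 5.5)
Your proposal is correct and follows the paper's proof essentially verbatim. The singleton is terminal in $\LOmon$, so Theorem~\ref{thm:terminal-S5}(2) gives the lower bound $\SFive$, and the reverse inclusion is Theorem~\ref{thm:upper-S5-mon-all}; your extra bookkeeping about transported parameters and the cone lemma just makes explicit what the proof of Theorem~\ref{thm:terminal-S5} already does.
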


\begin{proof}
A singleton world is terminal in $\LOmon$, because there is exactly one
monotone map into it.
So Theorem~\ref{thm:terminal-S5}(2) gives the lower bound
$\SFive$.
The reverse inclusion is Theorem~\ref{thm:upper-S5-mon-all}.
\end{proof}

\begin{theorem}
\label{thm:mon-S42-iff-infinite}
Let $W$ be a nonempty world in $\LOmon$.
Then
\[
\Val_{\LOmon}(W,\Lmord(W))=\SFourTwo
\]
if and only if $W$ is infinite.
\end{theorem}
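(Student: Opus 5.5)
The plan is to get the lower bound from Theorem~\ref{thm:S42-mon-all}, which already gives $\SFourTwo\subseteq\Val_{\LOmon}(W,\Lmord(W))$ for every world. So the statement reduces to two claims. If $W$ is infinite, $W$ carries the control statements of Proposition~\ref{prop:upper-bounds}(2). If $W$ is finite and nonempty, this control-statement hypothesis fails, and the sharpness clause of Proposition~\ref{prop:upper-bounds}(2) then yields a valid principle outside $\SFourTwo$.

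\emph{Infinite case.} The buttons come from collapsing pairs of named points. Fix $n$ and $N$, and choose points $a_0<a_1<\cdots<a_{2n-1}$ in $W$, which is possible since $W$ is infinite. For $i<n$ put $b_i:=(a_{2i}=a_{2i+1})$.
\begin{itemize}
\item Each $b_i$ is a pure button: monotone maps preserve equalities, and a step map of the kind used in the proof of Theorem~\ref{thm:elimination-mon} collapses any prescribed adjacent blocks.
\item Independence is also read off that proof. At any world $U$ above $W$, the images of $\bar a$ realize some ordered partition $P$ that is a coarsening of the original pattern. For any coarsening $Q$ of $P$ obtained by merging further pairs $\{a_{2i},a_{2i+1}\}$, the construction there gives a monotone map $U\to M$ realizing $Q$ for any $M$ containing points $c_0<\cdots<c_{k-1}$.
\item For the dial, use the sentential $N$-dial $d_1,\dots,d_{\ge N}$ from Theorem~\ref{thm:upper-S5-emb-sent}, built from $\mathrm{Disc}_r$. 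Take the target $M=\Q+D_r$, where $D_r$ is a discrete interval of size $r$, and place the $c_j$ inside $\Q$. This choice of target is independent of which pairs are merged, so we can set the dial value arbitrarily while realizing any button pattern.
\end{itemize}
Hence the dial is independent from the buttons, every dial value is always reachable, and Proposition~\ref{prop:upper-bounds}(2) gives $\Val\subseteq\SFourTwo$.

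\emph{Finite case.} Let $W=\{w_0<\cdots<w_{k-1}\}$, and suppose towards a contradiction that $\Val_{\LOmon}(W,\Lmord(W))=\SFourTwo$. By the sharpness clause, there are then arbitrarily large independent families of unpushed pure buttons $b_j(\bar a_j)$, and we may assume all parameters are among $\bar w$.

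For $U$ above $W$ via $h$, let $P(U)$ be the ordered partition realized by $h(\bar w)$. It is a coarsening of the discrete pattern of $\bar w$, so it is determined by the set of merged adjacent pairs. This identifies the possible values of $P(U)$ with $\{0,1\}^{k-1}$.

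The key claim is that a pure button's truth at $U$ depends only on $P(U)$. Suppose $P(U)=P(U')$. The step-map construction from the proof of Theorem~\ref{thm:elimination-mon}, applied with target $U'$ and with the points $c_i$ taken to be the distinct images of $\bar w$ in $U'$, gives a monotone map $U\to U'$ carrying $h(\bar w)$ to $h'(\bar w)$. The symmetric construction gives a map $U'\to U$. Purity then transfers $b_j$ in both directions.

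Moreover, $P$ is monotone along accessibility, so each $b_j$ corresponds to an upward-closed subset $B_j$ of the product $\{0,1\}^{k-1}$ of $k-1$ two-element chains. Independence of the buttons makes these sets necessarily independent above the zero vector. Here every pattern is realized by collapsing $W$ onto a finite order, and independence in the cone is transported exactly as in the proof of Theorem~\ref{thm:emb-S42-iff-adjacent}. Lemma~\ref{lem:finite-gap-independent-buttons} then bounds the family size by $k-1$, which contradicts the arbitrarily large families. For $k=1$ the bound says there are no buttons at all, consistent with Theorem~\ref{thm:singleton-mon-formulaic}.

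The main obstacle is the claim that pure buttons factor through the coarsening pattern $P(U)$. Its proof needs mutual accessibility over the parameters between any two worlds with the same pattern. The back-and-forth maps must send named points exactly to named points, including the tails beyond the extreme parameters. This is exactly the step map of Theorem~\ref{thm:elimination-mon} with $k\ge 1$, so the main care needed is to check that the construction applies verbatim when the target is an arbitrary nonempty order containing the required points.
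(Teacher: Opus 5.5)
Your proposal is correct and follows the paper's proof in its essentials. The lower bound comes from Theorem~\ref{thm:S42-mon-all}; for infinite $W$, buttons obtained by collapsing pairs of parameters, together with a dial, are fed into Proposition~\ref{prop:upper-bounds}(2); and for finite $W$, the sharpness clause is combined with the observation that a pure button's truth depends only on the convex partition of $W$ induced by the accessibility map. The differences are minor: your sentential dial realized in targets $\Q+D_r$ (available because monotone maps need not be surjective) replaces the paper's dial on the side of $c$ opposite the parameters, your step maps between worlds with the same pattern replace the paper's image-and-retraction factorization, and Lemma~\ref{lem:finite-gap-independent-buttons} replaces the paper's direct comparison of $2^{k-1}$ convex partitions with $2^N$ button patterns.
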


\begin{proof}
Assume first that $W$ is infinite.
Choose $c\in W$ so that one side $J$ of $c$ is infinite, and let $K$ be the
opposite side.  Fix $N<\omega$ and choose pairwise distinct parameters
$u_1<v_1<\cdots<u_N<v_N$ inside $J$; if $J=(c,\infty)$ this means
$c<u_1<v_1<\cdots<u_N<v_N$, while if $J=(-\infty,c)$ it means
$u_1<v_1<\cdots<u_N<v_N<c$.
Let
\[
\rho(\bar u,\bar v):=\bigvee_{j\neq k}
(u_j=u_k\vee u_j=v_k\vee v_j=v_k),
\]
and define
\[
b_i:=(u_i=v_i)\vee \rho(\bar u,\bar v)
\qquad (1\le i\le N).
\]
Each $b_i$ is an unpushed pure button.  Indeed, if $b_i$ is false in an
accessible world, then the transported parameters are still separated and
$\rho$ is false there; collapsing the convex interval between the images of
$u_i$ and $v_i$ pushes $b_i$, while any equality among transported parameters is
preserved by further monotone maps.  The family is independent in the necessary
sense.  Let $M$ be any accessible world and let
$P=\{i:M\satisfies b_i\}$.  If $M\satisfies\rho$, then $P=\{1,\dots,N\}$ and there is
nothing to prove.  Otherwise the transported intervals
$[u_i^M,v_i^M]$ remain linearly ordered and pairwise separated except possibly
at endpoints already forced by monotonicity.  Given any target set
$S\supseteq P$, collapse exactly the intervals $[u_i^M,v_i^M]$ with
$i\in S\setminus P$.  No equality $u_j=v_j$ with $j\notin S$ is introduced, and
no instance of $\rho$ is introduced, so exactly the buttons in $S$ are pushed.

On the opposite side $K$, and for arbitrary dial length, use the dial recording
the maximum size of a discrete interval in $K$, with a catch-all last value.
Even if $K$ is finite or empty in the current world, every finite value of this
dial is reachable. Indeed, from any accessible world, replace the strict side $K$
of the transported image of $c$---that is, the open initial segment below it or
the open final segment above it---by a finite chain of the desired length and
keep $c$ and the other side fixed.  This sets the dial value while preserving all button states,
since the button parameters lie on side $J$.  Conversely, pushing any of the
buttons collapses only intervals on side $J$ and therefore preserves the dial
value.  Thus arbitrarily long finite dials are independent from the buttons.
By Theorem~\ref{thm:S42-mon-all} and Proposition~\ref{prop:upper-bounds}(2),
we obtain exact validity of $\SFourTwo$.

Conversely, suppose
$\Val_{\LOmon}(W,\Lmord(W))=\SFourTwo$.
By the sharpness clause in Proposition~\ref{prop:upper-bounds}(2),
$W$ admits arbitrarily large finite independent families of unpushed pure buttons
together with arbitrarily long finite dials.
Assume towards a contradiction that $W$ is finite.

Let $f:W\to M$ be any monotone map.
Since $W$ is finite, the image $f(W)$ is finite.
Let
\[
d_0<\cdots<d_{m-1}
\]
list the distinct points of $f(W)$ in increasing order, let $M'$ be the
induced finite suborder on $\{d_0,\dots,d_{m-1}\}$, let
$q:W\to M'$
be the corestriction of $f$, and let
$e:M'\to M$
be the inclusion, so that
$f=e\circ q$.
Because $M'$ is finite, there is also a monotone retraction
$g:M\to M'$
defined by sending $x\in M$ to the largest $d_i\le x$ if such exists, and to
$d_0$ otherwise.
Then
\[
g\circ e=\mathrm{id}_{M'}
\qquad\text{and}\qquad
g\circ f=q.
\]

Now let $b(\bar x)$ be any pure button at $W$, and let $\bar a$ be parameters
from $W$.
Since $b$ is pure on the cone above $W$, its truth is preserved along every
arrow in that cone.
Applying this to the arrows
$g:M\to M'$
and
$e:M'\to M$,
we obtain $M\satisfies b[f(\bar a)]$ if and only if $M'\satisfies b[q(\bar a)]$. Therefore, for any fixed finite family of pure buttons, the realized
button-pattern in an accessible world depends only on the corestriction
$q:W\to M'$, equivalently on the convex partition of $W$ into the fibres of
the map.

A finite linear order has only finitely many convex partitions.
Hence only finitely many button-patterns can be realized above $W$.
But an independent family of $N$ pure buttons requires all $2^N$ possible
patterns to be realizable.
This is impossible for arbitrarily large $N$.
Therefore $W$ must be infinite.
\end{proof}

\section{The modal theory of linear orders and condensations}

Condensations are the opposite extreme. A condensation can collapse large convex
pieces of an order, and the previous section showed that this makes scatteredness
visible to the modal language. We turn next to $\LOcond$, the category of
nonempty linear orders and condensations. The natural substitution language here
is the full modal language with parameters, because the exact logic is controlled
by formulas using the modal predicate $\mathrm{Scatt}_{<}(x)$. The result is a useful
contrast with embeddings. Instead of adjacent pairs providing the main buttons,
the decisive persistent possibility is whether an initial segment is
non-scattered. In particular, the exact logic below is obtained at every
non-scattered world; no global condensation onto $\Q$ is required.

\begin{theorem}
\label{thm:S421-cond-all}
Every nonempty world in $\LOcond$ validates $\SFourTwoOne$ for arbitrary
substitution instances with parameters.
\end{theorem}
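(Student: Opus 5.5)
The plan is to get $\SFourTwo$ from the amalgamation criterion and the McKinsey axiom $\axiomf{.1}$ from a terminal object, then combine them with the cone lemma. The key observation is that in $\LOcond$ the singleton order $\{s\}$ is terminal. For every nonempty linear order $U$, the constant map $U\to\{s\}$ is monotone and surjective, so it is a condensation, and it is clearly the only map into $\{s\}$. Moreover, any condensation out of $\{s\}$ is a surjection from a one-point set onto a nonempty linear order. Its target is therefore a singleton, and the map is an isomorphism.

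For $\SFourTwo$, I would invoke Theorem~\ref{thm:S42-lower-bound}. Given any span $U_0\leftarrow V\rightarrow U_1$ above $W$, map both $U_0$ and $U_1$ to $\{s\}$. Since maps into $\{s\}$ are unique, the two composites $V\to\{s\}$ coincide, so every span is $A$-amalgamable for every parameter set $A$. This is the argument of Theorem~\ref{thm:S42-mon-all}, now transferred to the category of nonempty orders and surjections. Hence $\SFourTwo\subseteq\Val_{\LOcond}(W,\Lmord(W))$.

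For $\axiomf{.1}$, I would verify $\necessary\possible\varphi\to\possible\necessary\varphi$ at every world $U$ of $\Cone(W)$, for every formula $\varphi$ with parameters transported from $W$.
\begin{enumerate}[label=\textup{(\arabic*)}]
\item Suppose $U\satisfies\necessary\possible\varphi[\bar a]$, and let $c:U\to\{s\}$ be the constant condensation.
\item Then $\{s\}\satisfies\possible\varphi[c(\bar a)]$. So there is a condensation $h:\{s\}\to V$ with $V\satisfies\varphi[h c(\bar a)]$.
\item The map $h$ is an isomorphism, so the renaming lemma (Proposition~\ref{prop:renaming-lemma}) gives $\{s\}\satisfies\varphi[c(\bar a)]$. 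Here all parameters are sent to $s$.
\item Every arrow $h':\{s\}\to V'$ is likewise an isomorphism, so the renaming lemma again gives $V'\satisfies\varphi[h'c(\bar a)]$. Hence $\{s\}\satisfies\necessary\varphi[c(\bar a)]$.
\item The arrow $c$ therefore witnesses $U\satisfies\possible\necessary\varphi[\bar a]$.
\end{enumerate}
Since this argument works at every world of the cone and for arbitrary valuations, the cone lemma (Proposition~\ref{prop:cone-lemma}) applied to $\SFour+\axiomf{.2}+\axiomf{.1}$ yields $\SFourTwoOne\subseteq\Val_{\LOcond}(W,\Lmord(W))$. The $\SFour$ part holds in every Kripke category.

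There is no serious obstacle here. The only point needing care is that the categorical hypotheses hold in $\LOcond$. Worlds are nonempty, so constant maps are genuinely surjective. Surjectivity forces every arrow out of a singleton to be an isomorphism. Without that fact, the renaming-lemma step in (3)--(4) would fail.
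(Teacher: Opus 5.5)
Your proof is correct and is essentially the paper's argument. You amalgamate every span at the terminal singleton to get $\SFourTwo$ via Theorem~\ref{thm:S42-lower-bound}, verify McKinsey by passing to $\{s\}$, where the renaming lemma makes $\possible\varphi$ and $\necessary\varphi$ coincide, and conclude with the cone lemma. The differences are cosmetic: you justify amalgamation by the uniqueness of maps into $\{s\}$, which is the fact actually needed, and you route the renaming step through $\{s\}$ itself rather than through an isomorphism $S\cong T$.
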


\begin{proof}
Let $W$ be nonempty and let $\{s\}$ be a singleton world accessible from $W$.
Every world accessible from $\{s\}$ is again a singleton, and every condensation
out of a singleton is unique.
Thus every span in the cone above $W$ amalgamates over the singleton,
so $\SFourTwo$ is valid by Theorem~\ref{thm:S42-lower-bound}.

It remains to verify $\axiomf{.1}$, that is, the McKinsey axiom
$\necessary\possible p\to \possible\necessary p$.
Suppose $W\satisfies \necessary\possible\varphi[\bar a]$.
Let $h:W\onto\{s\}$ be the unique condensation.
Then
\[
\{s\}\satisfies \possible\varphi[h(\bar a)].
\]
So choose a condensation
\[
j:\{s\}\onto S
\]
with
\[
S\satisfies \varphi[j(h(\bar a))].
\]
Every world accessible from a singleton is again a singleton.
Now let
\[
k:\{s\}\onto T
\]
be any condensation.
Then $T$ is a singleton, and there is a unique isomorphism
$\pi:S\cong T$.
By the renaming lemma,
\[
T\satisfies \varphi[\pi(j(h(\bar a)))].
\]
Since all maps out of a singleton are unique, the tuples
$\pi(j(h(\bar a)))$ and $k(h(\bar a))$ agree.
Hence
\[
T\satisfies \varphi[k(h(\bar a))].
\]
Because $k$ was arbitrary, we conclude
\[
\{s\}\satisfies \necessary\varphi[h(\bar a)].
\]
Thus $W\satisfies \possible\necessary\varphi[\bar a]$, witnessed by $h$.

The same argument applies at every world in the cone above $W$.  Indeed, if
$f:W\onto U$ is a condensation and the parameters are transported to
$f(\bar a)$, then $U$ is again nonempty and hence condenses to a singleton;
from that singleton all further condensations are unique.  The preceding
renaming argument therefore proves the McKinsey implication at $U$ for the
transported parameters.  Thus the McKinsey axiom holds throughout
$\Cone(W)$.  By the cone lemma, $\SFourTwoOne$ is valid at $W$.
\end{proof}

\begin{lemma}
\label{lem:sigma-button-condensation}
Let $L$ be a nonempty linear order and let $c\in L$.  If the initial segment
$(-\infty,c)_L$ is non-scattered, then
\[
\sigma:=\mathrm{Scatt}_{<}(c)
\]
is an unpushed pure button at $L$ in $\LOcond$.
\end{lemma}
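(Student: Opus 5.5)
We must show three things about $\sigma=\mathrm{Scatt}_{<}(c)$: it is false at $L$ (unpushed), it is possibly necessary (indeed necessarily possibly necessary, so a button), and it is pure, i.e.\ once true it stays true along every condensation in the cone. Unpushedness is immediate from Corollary~\ref{cor:scattered-below}: since $(-\infty,c)_L$ is non-scattered, $L\not\satisfies\mathrm{Scatt}_{<}[c]$.

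For purity, I would take a condensation $f:U\onto V$ in $\Cone(L)$ with $U\satisfies\mathrm{Scatt}_{<}[u]$, where $u$ is the transported image of $c$, and show $(-\infty,f(u))_V$ is scattered. Since $f$ is monotone and surjective, every point of $V$ below $f(u)$ has a preimage, and such preimages lie strictly below $u$ (a preimage $\ge u$ would map to something $\ge f(u)$). So $f$ restricts to a surjective monotone map from $(-\infty,u)_U$ onto $(-\infty,f(u))_V$ (or from the empty order onto the empty order). A monotone surjective image of a scattered order is scattered: given a copy of $\Q$ in the image, choose one preimage for each of its points; monotonicity plus distinctness of images makes the choice strictly order-preserving, giving a copy of $\Q$ in the source. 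Hence $V\satisfies\mathrm{Scatt}_{<}[f(u)]$ by Corollary~\ref{cor:scattered-below}. The empty-segment case is trivially scattered and needs only a remark.

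For the button property, given any world $U$ accessible from $L$ with parameter $u$, I would collapse the whole segment $(-\infty,u]_U$ to a single point: define $g:U\onto U'$ where $U'=\{*\}+(u,\infty)_U$, sending $x\le u$ to $*$ and fixing the rest. This is a condensation, and in $U'$ the segment below $g(u)=*$ is empty, hence scattered; so $U'\satisfies\sigma$, and by purity (already shown) $U'\satisfies\necessary\sigma$. Thus $U\satisfies\possible\necessary\sigma$ at every world of the cone, i.e.\ $L\satisfies\necessary\possible\necessary\sigma$. The only step needing any care is purity, specifically verifying that preimages of points below $f(u)$ lie below $u$ and that the choice of preimages embeds $\Q$; everything else is direct.
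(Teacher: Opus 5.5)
Your proof is correct, but it reaches purity by a different route from the paper. The paper stays inside the modal syntax here. Since $\mathrm{Scatt}_{<}(x)$ is by definition $\neg\possible(\mathrm{Max}(x)\wedge\neg\mathrm{Scatt})$, suppose it failed after a condensation $M\onto M'$. Composing $M\onto M'$ with the witness to $\possible(\cdots)$ at $M'$ would give a witness at $M$. So purity is just closure of condensations under composition, and it holds for any formula of the form $\neg\possible\varphi$.

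You instead pass through the semantic characterization in Corollary~\ref{cor:scattered-below}. You then prove the order-theoretic fact that a copy of $\Q$ below $f(u)$ lifts, by choosing preimages, into $(-\infty,u)_U$, so scatteredness transfers. This works and gives a concrete order-theoretic reason. It costs a choice of preimages, however, and it is specific to this particular formula.

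There is one small inaccuracy. The restriction of $f$ to $(-\infty,u)_U$ need not map into $(-\infty,f(u))_V$, because points below $u$ may be sent to $f(u)$. What is true, and all your lifting argument actually uses, is that $f^{-1}[(-\infty,f(u))_V]\subseteq(-\infty,u)_U$ and that this preimage maps onto $(-\infty,f(u))_V$.

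For the button property, the paper collapses everything to a singleton. There $\sigma$ is necessary outright, since every world above a singleton is again a singleton, so that step does not depend on purity. Your partial collapse of $(-\infty,u]_U$ to a point keeps the final segment intact. It then needs purity to upgrade $\sigma$ to $\necessary\sigma$, which is legitimate because you established purity first.
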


\begin{proof}
The hypothesis says exactly that $L\satisfies\neg\sigma$, so the button is
unpushed.

To see that it is a button, let $j:L\onto M$ be any condensation and let
$k:M\onto\{s\}$ be the unique condensation to a singleton.  The initial segment
below the unique point of $\{s\}$ is empty, hence scattered.  Therefore
$\{s\}\satisfies \mathrm{Scatt}_{<}[k(j(c))]$, and this truth is necessary there,
since every further accessible world from a singleton is again a singleton.
Thus every world accessible from $L$ can reach a world in which $\sigma$ is
necessary.

Purity follows from Corollary~\ref{cor:scattered-below}.  If $\sigma$ holds at a
world $M$ and fails at a further condensation $M\onto M'$, then the definition of
$\mathrm{Scatt}_{<}$ would already witness the failure of $\sigma$ at $M$.
\end{proof}

\begin{lemma}
\label{lem:sigma-dials-and-buttons-nonscattered-condensation}
Let $L$ be a non-scattered nonempty linear order.  Then there is a point
$c\in L$ such that, putting
\[
\sigma:=\mathrm{Scatt}_{<}(c),
\]
$\sigma$ is an unpushed pure button at $L$, and below $\sigma$ there are
arbitrarily long finite $\sigma$-dials and arbitrarily large finite independent
families of unpushed pure $\sigma$-buttons, independent from those dials.
\end{lemma}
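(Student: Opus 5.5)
The plan is to take $c$ inside a copy of $\Q$, to put all button parameters above $c$ in that copy, and to build the dials below $c$, where non-scatteredness persists exactly while we remain below $\sigma$. Since $L$ is non-scattered, fix an embedded copy $Q\subseteq L$ of $\Q$. Choose points
\[
c<u_1<v_1<u_2<v_2<\cdots<u_N<v_N
\]
in $Q$; since $N$ will vary, choose them inside a fixed copy of $\Q$ above $c$ in $Q$. Because the points of $Q$ below $c$ form a copy of $\Q$, the segment $(-\infty,c)_L$ is non-scattered. Lemma~\ref{lem:sigma-button-condensation} then shows that $\sigma=\mathrm{Scatt}_{<}(c)$ is an unpushed pure button at $L$. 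The key observation is that, by Corollary~\ref{cor:scattered-below}, a world $M$ lies below $\sigma$ exactly when $(-\infty,c^M)_M$ is non-scattered.

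For the buttons, imitate the monotone-map proof of Theorem~\ref{thm:mon-S42-iff-infinite}, but also build in $\sigma$. Let $\rho$ be the disjunction of all coincidences among the transported parameters $c,\bar u,\bar v$, and set
\[
b_i:=(u_i=v_i)\vee\rho\vee\sigma .
\]
\begin{enumerate}[label=\textup{(\roman*)}]
\item Purity follows because equalities persist under condensations and $\sigma$ is pure.
\item Once $\sigma$ is pushed, every $b_i$ is automatically true.
\item Below $\sigma$, with $\rho$ false and $i\notin P$, push exactly the buttons in $S\setminus P$ for any prescribed $S\supseteq P$. Do this by collapsing the convex intervals $[u_i^M,v_i^M]$ for $i\in S\setminus P$ and acting as the identity elsewhere. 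This is a condensation, it introduces no instance of $\rho$, and it leaves $(-\infty,c^M)$ untouched, so $\sigma$ stays false.
\end{enumerate}
Each $b_i$ is unpushed at $L$ and is a button, because we may always collapse to a singleton.

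For the dials, work inside $(-\infty,c)$. In a world $M$ below $\sigma$, let the dial $d_r$ record the size $r$ of the maximal finite discrete initial chain of $M$, that is, the largest $r$ such that the first $r$ points of $M$ exist and are successive; use a catch-all value $\ge m$. If this chain were to contain $c^M$, then $(-\infty,c^M)$ would be finite, so below $\sigma$ the chain sits strictly below $c^M$. To reset the dial to any value $r$ from any $M$ below $\sigma$, proceed in three steps.
\begin{enumerate}[label=\textup{(\arabic*)}]
\item Apply Doets's characterization to $(-\infty,c^M)_M$ to obtain a condensation onto a nontrivial dense order $D$.
\item As in the proof of Theorem~\ref{thm:scattered-modally-definable}, pick $u<v$ in $D$. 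Send everything $\le u$ onto a chain $1<2<\dots<r$, by splitting $(-\infty,u]$ into $r$ nonempty convex pieces; this is possible after first refining, since $D$ is dense.
\item Map $(u,v)$ identically and send $[v,\infty)_D$ to a point immediately below $c$, or simply glue it onto the interval so that no new successor pairs arise at the bottom.
\end{enumerate}
Keep $[c^M,\infty)$ fixed. The composite is a condensation of $M$ that fixes all button data, so the dial is independent from the buttons. The resulting segment below $c$ contains the dense interval $(u,v)$, so it is non-scattered and $\sigma$ stays false. Conversely, pushing buttons acts only above $c$, so the dial value is preserved.

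The main obstacle is the dial-reset step. One must check carefully that the glued order has initial discrete chain of exactly length $r$, with no accidental successor pair from the chain into the dense part, and that the map is genuinely surjective and monotone on all of $M$, including how the $D$-condensation from Doets interacts with the points above $c^M$. Once this is verified, Proposition~\ref{prop:upper-bounds}(4) applies directly.
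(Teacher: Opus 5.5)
Your proposal is correct, and its overall architecture is the paper's. You take $c$ in a copy of $\Q$, put the buttons strictly above $c$ and the dial strictly below $c$, and $\sigma$ stays false as long as $(-\infty,c^M)_M$ is left alone or replaced by a non-scattered order.

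Your buttons are essentially the paper's $\sigma\vee(a_i=b_i)$. The extra disjunct $\rho$ is harmless, since it lets you check independence only in the strictly separated case, but it is unnecessary. It must also omit the same-index coincidences $u_i=v_i$; otherwise all the $b_i$ are the same formula.

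The real difference is the dial. The paper uses the parametrized formulas $\delta_n(c)$, which record the longest adjacency chain anywhere below $c$, and resets them by a detour:
\begin{itemize}
\item Corollary~\ref{cor:scattered-below} makes $c$ the maximum of a non-scattered quotient.
\item The equivalence $\neg\mathrm{Scatt}\leftrightarrow\possible\mathrm{Dense}$ gives a dense quotient with endpoints.
\item The tagged hub $H$ reattaches $[c_M,\infty)$.
\item The top of $H^-$ is collapsed to $p_0<\dots<p_{n-1}$ just below $c$, above a dense $B$ with no maximum.
\end{itemize}
You instead use a parameter-free sentence, the length of the discrete initial chain. You observe that below $\sigma$ this chain lies inside the infinite segment $(-\infty,c^M)_M$. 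You then reset it by applying Doets's characterization directly to that segment and taking the identity on $[c^M,\infty)_M$.

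This is shorter, and it removes the obstacle you flag at the end. A sum of condensations is a condensation, and the only adjacency check is that $(u,v)_D$ has no least element, so the point $r$ has no successor. The paper's route, in exchange, stays within its own modal toolkit and keeps the dial local to $c$.

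Two small repairs are needed in your version:
\begin{itemize}
\item Do not make ``no minimum'' ($r=0$) a separate dial value. A condensation of an order with a minimum always has one, so that value is unreachable from any world below $\sigma$ that has a minimum. Fold it into the catch-all.
\item In step~(2), choose $u$ strictly above the least element of $D$. Then $(-\infty,u]_D$ is infinite and really splits into $r$ nonempty convex pieces.
\end{itemize}
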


\begin{proof}
Since $L$ is non-scattered, fix an embedded copy
$e:\Q\emb L$.  Choose $q\in\Q$ and put $c=e(q)$.  Then
$e((-\infty,q)_{\Q})$ is a copy of $\Q$ below $c$, so
$(-\infty,c)_L$ is non-scattered.  Lemma~\ref{lem:sigma-button-condensation}
therefore shows that $\sigma$ is an unpushed pure button at $L$.

We next build the $\sigma$-dials.  Let $\mathrm{Adj}(x,y)$ abbreviate adjacency,
namely $x<y\wedge \neg\exists z\,(x<z\wedge z<y)$.  For $n\ge 1$, let
$\mathrm{Chain}^{<}_n(x)$ assert that there are
$y_0<\cdots<y_{n-1}<x$ with
$\mathrm{Adj}(y_i,y_{i+1})$ for every $i<n-1$.  Thus all points of the
chain lie in the open initial segment $(-\infty,x)$.  Define
\[
\delta_n(x):=\mathrm{Chain}^{<}_n(x)\wedge
\neg\mathrm{Chain}^{<}_{n+1}(x).
\]
For $N\ge 2$, put
\[
d_i(x):=\delta_{i+1}(x)\qquad (i<N),
\qquad
d_{\ge N}(x):=\neg\bigl(d_0(x)\vee\cdots\vee d_{N-1}(x)\bigr).
\]
For any parameter $x$, the assertions $\delta_n[x]$ are pairwise incompatible:
if $m>n$, then $\delta_m[x]$ implies $\mathrm{Chain}^{<}_{n+1}[x]$, while
$\delta_n[x]$ implies its negation.  Hence exactly one of
$d_0[x],\dots,d_{N-1}[x],d_{\ge N}[x]$ holds at every world.  It remains to show
that every desired dial value can be reached while staying below $\sigma$.

Let $M$ be any world below $\sigma$ accessible from $L$, and write $c_M$ for the
image of $c$ in $M$.  Thus $M\satisfies\neg\sigma$, equivalently
$M\satisfies\neg\mathrm{Scatt}_{<}[c_M]$.  We first pass to a convenient hub below
$c_M$ and then quotient that hub so as to create a prescribed finite adjacency
chain below the image of $c_M$, without changing the final segment
$[c_M,\infty)_M$ apart from retagging it in the target.

Since $M\satisfies\neg\sigma$, Corollary~\ref{cor:scattered-below} yields a
condensation $u:M\onto U$ such that
\[
U\satisfies \mathrm{Max}[u(c_M)]\wedge\neg\mathrm{Scatt}.
\]
Because $\neg\mathrm{Scatt}$ is equivalent to $\possible\mathrm{Dense}$, there
is a condensation $v:U\onto D$ with $D\satisfies\mathrm{Dense}$.  Put
$r=v\circ u:M\onto D$ and let $c_D=r(c_M)$.  Then $c_D$ is a maximum of $D$.
Let
\[
D^{-}=\{d\in D:d<c_D\}
\qquad\text{and}\qquad
I=r^{-1}(D^{-})\subseteq M.
\]
Since $D^{-}$ is an initial segment of $D$ and $r$ is monotone, the set $I$ is
an initial segment of $M$, contained in $\{x\in M:x<c_M\}$.  Define a linear
order
\[
H=(D^{-}\times\{0\})\cup (\{x\in M:x\ge c_M\}\times\{1\}),
\]
ordered as a tagged sum, and define $h:M\to H$ by
\[
h(x)=
\begin{cases}
 (r(x),0),&x\in I,\\
 (c_M,1),&x<c_M\text{ and }x\notin I,\\
 (x,1),&x\ge c_M.
\end{cases}
\]
Then $h$ is monotone and surjective, hence a condensation.  Moreover,
$h(c_M)=(c_M,1)$, and the segment below it in $H$ is exactly
$D^{-}\times\{0\}$.  Since $D\satisfies\mathrm{Dense}$, this segment is a
nontrivial dense linear order with a minimum and no maximum.

Now fix $n\ge 1$.  Let $H^-:=\{x\in H:x<h(c_M)\}$.  Choose
$t_0<\cdots<t_{n-1}$ in $H^-$ with $t_0$ above the minimum of $H^-$.  Let
$B=\{x\in H^-:x<t_0\}$.  The order $B$ is again non-scattered, dense, and has
no maximum.  Let $p_0<\cdots<p_{n-1}$ be a finite chain of $n$ new points, and
let
\[
K_n:=B+p_0+\cdots+p_{n-1}+
\{x\in H:x\ge h(c_M)\},
\]
where the last summand keeps its order from $H$.  There is a natural
condensation $g_n:H\onto K_n$: it is the identity on $B$ and on the final
summand $\{x\in H:x\ge h(c_M)\}$, it collapses the convex intervals
$[t_i,t_{i+1})$ to $p_i$ for $i<n-1$, and it collapses
$[t_{n-1},\infty)_{H^-}$ to $p_{n-1}$.  The segment below $g_n(h(c_M))$ is
$B+p_0+\cdots+p_{n-1}$.  The points $p_0<\cdots<p_{n-1}$ form an
adjacency-chain of length $n$, and there is no adjacency-chain of length $n+1$ below
$g_n(h(c_M))$, since $B$ is dense with no maximum.  Thus
$K_n\satisfies\delta_n[g_n(h(c_M))]$.  Also, $B$ is non-scattered, so
$K_n\satisfies\neg\mathrm{Scatt}_{<}[g_n(h(c_M))]$, and the construction remains
below $\sigma$.

Taking $n=i+1$ realizes $d_i$ for each $i<N$, and taking $n=N+1$ realizes
$d_{\ge N}$.  Hence the displayed list is a $\sigma$-dial below $\sigma$.

It remains to produce independent $\sigma$-buttons.  Choose rationals
\[
q<r_0<s_0<r_1<s_1<\cdots<r_{m-1}<s_{m-1},
\]
and set $a_i=e(r_i)$ and $b_i=e(s_i)$.  Then
\[
c<a_0<b_0<a_1<b_1<\cdots<a_{m-1}<b_{m-1}
\]
in $L$.  Define
\[
\beta_i:=\sigma\vee(a_i=b_i)\qquad (i<m).
\]
The same argument as above shows that each $\beta_i$ is an unpushed pure
$\sigma$-button.  Indeed, if $M$ is a world below $\sigma$ accessible from $L$
and the images of $a_i$ and $b_i$ are still distinct, then collapsing the closed
convex interval $[a_i^M,b_i^M]$ pushes $\beta_i$.  This interval lies at or above
$c_M$; even if its left endpoint is $c_M$, the open initial segment below the
image of $c$ is unchanged, hence still non-scattered.  Once $\beta_i$ holds, it
is necessary: either $\sigma$ holds, and $\sigma$ is pure, or the equality
$a_i=b_i$ holds, and equalities are preserved by further condensations.

The family is independent below $\sigma$ in the necessary sense.  Let $M$ be any
world below $\sigma$ accessible from $L$, and let the images of the parameters
be denoted by the same letters.  Monotonicity gives
\[
c\le a_0\le b_0\le\cdots\le a_{m-1}\le b_{m-1}.
\]
Below $\sigma$, the button $\beta_i$ is true exactly when $a_i=b_i$.  If
$P=\{i<m:a_i=b_i\}$ is the current pushed set and $S\supseteq P$, collapse the
closed intervals $[a_i,b_i]$ with $i\in S\setminus P$.  These convex intervals
are linearly ordered, possibly meeting only at endpoints, so the quotient is a
linear order and the quotient map is a condensation.  No equality $a_j=b_j$
with $j\notin S$ is introduced, and all collapses occur at or above $c$, so the
open initial segment below $c$ is unchanged.
The resulting world therefore remains below $\sigma$ and realizes exactly the
chosen target pattern $S$.

Finally, the $\sigma$-dial and the family of $\sigma$-buttons are independent
below $\sigma$.  The dial constructions change only the open initial segment
below the transported parameter $c$, and so preserve the equalities $a_i=b_i$;
pushing the buttons collapses intervals at or above $c$, and so preserves the
finite adjacency-chain information in that open initial segment.  This completes
the proof.
\end{proof}

\begin{theorem}
\label{thm:nonscattered-condensation-exact-S421}
If $L$ is a non-scattered nonempty linear order, then
\[
\Val_{\LOcond}(L,\Lmord(L))=\SFourTwoOne.
\]
Consequently, the propositional modal validities of the category $\LOcond$, for
substitutions from the full modal language with parameters, are exactly
$\SFourTwoOne$.
\end{theorem}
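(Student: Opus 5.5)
The plan is to sandwich $\Val_{\LOcond}(L,\Lmord(L))$ between two bounds that are already available. For the lower bound, Theorem~\ref{thm:S421-cond-all} says that every nonempty world of $\LOcond$ validates $\SFourTwoOne$ for arbitrary substitution instances with parameters. Applied to $L$, this gives $\SFourTwoOne\subseteq\Val_{\LOcond}(L,\Lmord(L))$.

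For the upper bound, we apply Proposition~\ref{prop:upper-bounds}(4) with $\mathcal L=\Lmord(L)$; this language is nonempty and closed under Boolean connectives. Lemma~\ref{lem:sigma-dials-and-buttons-nonscattered-condensation} supplies a point $c\in L$ such that $\sigma=\mathrm{Scatt}_{<}(c)$ is an unpushed pure button at $L$. Below $\sigma$ the same lemma gives arbitrarily long finite $\sigma$-dials and arbitrarily large independent families of unpushed pure $\sigma$-buttons, independent from those dials. All of these are formulas of $\Lmord(L)$, with parameters $c,a_i,b_i$ and the modal predicate $\mathrm{Scatt}_{<}$.

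The one point to check is that the buttons $\beta_i=\sigma\vee(a_i=b_i)$ meet the clause ``once $\sigma$ is pushed, $b$ is automatically true'' in the definition of a pure $\sigma$-button. They do, since each $\beta_i$ contains $\sigma$ as a disjunct. The labeling of $E_{n,m}$ in the proof of clause~(4) then yields $\Val_{\LOcond}(L,\Lmord(L))\subseteq\SFourTwoOne$. Combined with the lower bound, this gives equality.

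For the category-level consequence, I would read ``the propositional validities of $\LOcond$'' as the formulas valid at every world for parameter substitutions. Theorem~\ref{thm:S421-cond-all} makes $\SFourTwoOne$ valid at every world, so it is contained in the category-level validities. In the other direction, the category-level validities are contained in $\Val_{\LOcond}(\Q,\Lmord(\Q))=\SFourTwoOne$ by the first part, because $\Q$ is non-scattered. This gives equality.

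There is no real obstacle here; the substantive work lives in the two cited results. The only care needed is to confirm that the control statements from Lemma~\ref{lem:sigma-dials-and-buttons-nonscattered-condensation} lie in the chosen substitution language and satisfy every hypothesis of clause~(4) as stated.
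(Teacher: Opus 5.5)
Your proposal is correct and follows the paper's proof step for step. The lower bound comes from Theorem~\ref{thm:S421-cond-all}, the upper bound from feeding the control statements of Lemma~\ref{lem:sigma-dials-and-buttons-nonscattered-condensation} into Proposition~\ref{prop:upper-bounds}(4), and the category-level claim from the non-scatteredness of $\Q$. Your extra check that $\beta_i=\sigma\vee(a_i=b_i)$ is automatically true once $\sigma$ is pushed spells out a point the paper leaves implicit.
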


\begin{proof}
The lower bound is Theorem~\ref{thm:S421-cond-all}.  For the upper bound, apply
Lemma~\ref{lem:sigma-dials-and-buttons-nonscattered-condensation} to obtain a
point $c\in L$ such that $\sigma=\mathrm{Scatt}_{<}(c)$ is an unpushed pure button,
and below $\sigma$ there are arbitrarily long finite $\sigma$-dials together
with arbitrarily large independent families of unpushed pure $\sigma$-buttons.
Hence Proposition~\ref{prop:upper-bounds}(4) yields the upper inclusion
$\Val_{\LOcond}(L,\Lmord(L))\subseteq \SFourTwoOne$.
The final assertion follows because $\Q$ is non-scattered, and the lower
bound holds at every nonempty world.
\end{proof}

\section{The modal theory of linear orders and end-extensions}

Finally, let us consider the category of linear orders and end-extension
embeddings. This returns to the embedding side of the picture, but the modal
behavior is quite different. A finite world can only grow by adding a tail above
its old top, and this one-sided growth is flexible enough to label arbitrary
finite pretrees. The proof is constructive. Above a fixed finite base order, we
build a family of finite tails that code dials, branch choices, and finally a
railyard labeling.

Let $\LOend$ be the subcategory of $\LOemb$ with the same worlds but whose
morphisms are required to be end-extension embeddings, that is, embeddings whose
images are initial segments.  If $W$ is a linear order and $T$ another linear
order, we write $W+T$ for the ordered sum obtained by placing every point of
$T$ above every point of $W$.  The inclusion $W\emb W+T$ is then an
end-extension.

We next classify the finite worlds of $\LOend$.  The purpose of the construction
is to make a finite propositional frame visible in the possible finite tails
above the old order.  It has three stages.  First we define a dial by looking at
the shape of a final finite tail.  Then we code branch choices by inserting
carefully chosen sums of finite chains.  Finally we combine these devices to
label a finite pretree.

Fix a finite base order $L$.  All formulas in the construction are interpreted
relative to this fixed base order.  The formulas $P_n$ below mark points which
terminate a successor chain of length $n$ whose first point lies at or above the
embedded copy of $L$.  The first point is not required to be the old top of $L$;
this local definition is needed later, when finite successor blocks are placed
after dense buffers.  Thus $\mathrm{Empty}_n$, $\mathrm{Min}_n$, and
$\mathrm{NoMin}_n$ allow us to detect whether such $P_n$-points are absent, have
a least representative, or occur without a least representative.

Let $\eta$ be a dense order without endpoints, and let $C_r$ be the finite chain
of size $r$.
Let $\mathrm{Succ}(x,y)$ express that $y$ is the immediate successor of $x$.
If $L\neq\varnothing$ and $|L|=n$, let $\mathrm{Top}_L(x)$ say that $x$ has exactly
$n-1$ predecessors; in any end-extension this defines the top point of the embedded
copy of $L$.
Set
\[
\mathrm{Anch}_L(x):=\exists y\,(\mathrm{Top}_L(y)\wedge y\le x),
\qquad
\mathrm{Above}_L(x):=\exists y\,(\mathrm{Top}_L(y)\wedge y<x),
\]
with both formulas taken to be tautologies if $L=\varnothing$.
Define inductively
\[
P_1(z):=\exists y\,(\mathrm{Anch}_L(y)\wedge \mathrm{Succ}(y,z)),
\qquad
P_{n+1}(z):=\exists y\,(P_n(y)\wedge \mathrm{Succ}(y,z)).
\]
Finally, let
\begin{align*}
\mathrm{Empty}_n&:=\forall z\,\neg P_n(z),\\
\mathrm{Min}_n&:=\exists z\,(P_n(z)\wedge \forall w\,(P_n(w)\to z\le w)),\\
\mathrm{NoMin}_n&:=\bigl(\exists z\,P_n(z)\bigr)
\wedge \forall z\,(P_n(z)\to \exists w\,(P_n(w)\wedge w<z)).
\end{align*}
We shall also use the following elementary first-order abbreviations:
\[
\begin{aligned}
\mathrm{HasPred}(x)&:=\exists y\,\mathrm{Succ}(y,x),\\
\mathrm{NoPred}(x)&:=\neg\mathrm{HasPred}(x),\\
\mathrm{NoSucc}(x)&:=\neg\exists y\,\mathrm{Succ}(x,y),\\
\mathrm{Max}(x)&:=\neg\exists y\,(x<y).
\end{aligned}
\]

\begin{lemma}
\label{lem:end-dial}
For every $m>0$ there are sentential formulas $d_0,\dots,d_{m-1}$ forming a
dial above $L$ in $\LOend$.
Moreover, there is $N_{\mathrm{dial}}$ such that:
\begin{enumerate}[label=\textup{(\arabic*)}]
    \item every end-extension of $L$ satisfies exactly one of
    $d_0,\dots,d_{m-1}$;
    \item for each $j<m$ there is a linear order $T_j$ such that
    $W+T_j\satisfies d_j$ for every end-extension $W$ of $L$;
    \item appending $T_j$ creates no new $P_n$-points for any
    $n\ge N_{\mathrm{dial}}$.
\end{enumerate}
\end{lemma}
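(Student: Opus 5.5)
Since $d_0,\dots,d_{m-1}$ are required to be sentential, the first task is to find a parameter-free property of an end-extension of $L$ that is decided by a final tail and can be reset by appending a further tail. The natural choice is the shape of the top of the order: whether it has a maximum, and if it does, how long the maximal finite successor chain ending at that maximum is.

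For $r\ge 1$, let $\mathrm{TopChain}_r$ say that there is a maximum $x$ and a chain $y_0<\dots<y_{r-1}=x$ with $\mathrm{Succ}(y_i,y_{i+1})$. Define the dial values as follows.
\begin{itemize}
\item $d_0:=\neg\exists x\,\mathrm{Max}(x)$.
\item For $1\le j<m-1$, $d_j:=\mathrm{TopChain}_j\wedge\neg\mathrm{TopChain}_{j+1}$.
\item $d_{m-1}:=\exists x\,\mathrm{Max}(x)\wedge\neg(d_1\vee\dots\vee d_{m-2})$.
\end{itemize}
These formulas are pairwise exclusive and exhaustive, because the $\mathrm{TopChain}_r$ are nested and any order with a maximum satisfies $\mathrm{TopChain}_1$. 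This gives clause (1) at once.

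For clause (2), put $T_0:=\eta$, $T_j:=\eta+C_j$ for $1\le j<m-1$, and $T_{m-1}:=\eta+C_{m-1}$. The dense buffer $\eta$ has no minimum and no maximum, so it cuts off whatever $W$ had at its top. Hence $W+\eta$ has no maximum, and in $W+\eta+C_j$ the top successor chain is exactly $C_j$: the least point of $C_j$ has no immediate predecessor, since below it lies $\eta$ without a maximum. This holds for every $W$, with no case analysis on $W$.

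These $T_j$ also realize the dial: from any end-extension $W$ of $L$, appending $T_j$ is again an end-extension of $L$ satisfying $d_j$. So every value is always possible and exactly one holds, which is the dial property.

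For clause (3), take $N_{\mathrm{dial}}:=m$. A $P_n$-point created by appending $T_j$ must be the end of a successor chain of length $n$ beginning at or above the top of $L$. Such a chain cannot cross $\eta$, because no point of $\eta$ has an immediate successor or predecessor and the first point of $\eta+C_j$ after $W$ has no immediate predecessor. So any new successor chain lies inside $C_j$ and has length at most $j\le m-1<N_{\mathrm{dial}}$.

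The one point to check is whether appending $T_j$ could turn an old point of $W$ into a new $P_n$-point by extending a chain across the junction. This cannot happen, since no point of $\eta$ is an immediate successor of anything. I expect this verification of clause (3), namely that the junction with $\eta$ creates no successor relations, to be the only point needing care. It follows directly from the density of $\eta$ and its lack of a minimum.
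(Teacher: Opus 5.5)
For $m\ge 2$ your argument is correct, and its core is the paper's. You measure the successor block ending at the maximum, and you reset it by appending $\eta+C$, where the dense buffer blocks successor chains across the junction. Your count in clause (3) is even a bit generous: the new $P_n$-points lie in $C_j$ and have $n\le j-1\le m-2$.

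The one real gap is the case $m=1$, which the statement includes. There your two definitions (``no maximum'' for $d_0$, and the catch-all for $d_{m-1}$) are both definitions of $d_0$, and they conflict. Neither alone holds in every end-extension: $W+\eta$ has no maximum, while $W+\eta+C_1$ has one. So clause (1) fails as written. Set $d_0:=\top$ with, say, $T_0:=\eta$ in that case, as the paper does.

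The formula design also differs from the paper's.
\begin{itemize}
\item The paper anchors its formulas to $L$. For $j\ge 1$, $d_j$ asserts a final block $(b,u]$ of exactly $D_j$ points, each with an immediate predecessor. Here $u$ is the maximum, and $b$ lies strictly above the top of $L$ and has no immediate predecessor. The $D_j$ are arbitrary distinct sizes, and $d_0$ is the residual value.
\item This costs the paper a small argument that two final blocks cannot coexist. Your nested thresholds avoid that argument, and your formulas do not mention $L$ at all.
\item What the anchoring buys is that the finite base world itself satisfies $d_0$, since inside $L$ no block can lie strictly above the top of $L$. Proposition~\ref{prop:railyard-end} uses this when it places the base world on the first point of the root cluster.
\item With your dial, a nonempty $L$ of size $k$ satisfies $d_{\min(k,m-1)}$ instead. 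This is harmless for the lemma as stated. Later it is absorbed by re-enumerating the root cluster so that the designated initial node carries that dial value, but you would need to make that adjustment there.
\end{itemize}
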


\begin{proof}
Choose pairwise distinct integers $D_1,\dots,D_{m-1}\ge 1$.
Set $D_{\max}=0$ if $m=1$, and otherwise
$D_{\max}=\max\{D_1,\dots,D_{m-1}\}$.
Let
$N_{\mathrm{dial}}=D_{\max}+2$.
For $r\ge 1$, let $\mathrm{Exactly}_r(b,u)$ be a standard first-order formula
asserting that exactly $r$ many points $x$ satisfy $b<x\le u$, and put
\[
\begin{aligned}
\mathrm{FinalBlock}_r(b,u):={}&
\mathrm{Above}_L(b)\wedge \mathrm{Max}(u)\wedge \mathrm{NoPred}(b)\\
&\wedge \forall x\,\bigl((b<x\wedge x\le u)\to \mathrm{HasPred}(x)\bigr)\\
&\wedge \mathrm{Exactly}_r(b,u).
\end{aligned}
\]
For $j\ge 1$, define
\[
    d_j:=\exists b\exists u\,\mathrm{FinalBlock}_{D_j}(b,u).
\]
Let
$d_0:=\neg(d_1\vee\cdots\vee d_{m-1})$,
or simply $d_0:=\top$ when $m=1$.

The formulas $d_1,\dots,d_{m-1}$ are mutually exclusive.  Indeed, suppose
\[
\mathrm{FinalBlock}_r(b,u)\qquad\text{and}\qquad
\mathrm{FinalBlock}_s(b',u')
\]
both hold.  Then $u=u'$ by uniqueness of the maximum.  If $b<b'$, then
$b'<u$ because
$s\ge 1$, so the final-block condition for $b$ implies that $b'$ has an
immediate predecessor, contradicting $\mathrm{NoPred}(b')$.  The case
$b'<b$ is symmetric, hence $b=b'$, and then $r=s$ by the exact-cardinality
clause.  Since the integers $D_j$ are pairwise distinct, at most one of
$d_1,\dots,d_{m-1}$ can hold.  Hence exactly one of
$d_0,\dots,d_{m-1}$ holds in every end-extension of $L$, establishing
clause~\textup{(1)}.

For clause~\textup{(2)}, let $T_0=\eta$, and for $j\ge 1$ let
\[
T_j=\eta+C_{D_j+1}.
\]
Then $W+T_0=W+\eta$ has no maximum, so it satisfies $d_0$.
For $j\ge 1$, the order
$W+T_j=W+(\eta+C_{D_j+1})$
has a maximum, namely the top of the final finite chain.
The dense buffer $\eta$ ensures that the bottom of that final chain has no
immediate predecessor, so $W+T_j\satisfies d_j$.

For clause~\textup{(3)}, each $T_j$ begins with a dense block $\eta$.
Therefore no element of $W$ has an immediate successor in the appended tail,
and successor chains cannot cross from $W$ into the new part.
If $j=0$, then $T_0=\eta$ has no successor steps at all, so it creates no new
$P_n$-points.
If $j\ge 1$, any successor chain inside
$\eta+C_{D_j+1}$
is contained in the finite block $C_{D_j+1}$, and therefore has at most $D_j$
successor steps.
So appending $T_j$ creates no new $P_n$-points for
$n\ge D_j+2$.
With
$N_{\mathrm{dial}}=D_{\max}+2$,
this proves clause~\textup{(3)} for all $j<m$.
\end{proof}

\begin{lemma}
\label{lem:end-branch-coding}
Let $k\ge 1$.
There are sentential formulas $\mathrm{Branch}_{n,a}$ for $n\ge 1$ and
$a<k$, and an integer $B_{\max}$, such that:
\begin{enumerate}[label=\textup{(\arabic*)}]
    \item if $\neg \mathrm{Empty}_n$ holds, then exactly one
    $\mathrm{Branch}_{n,a}$ holds;
    \item branch values persist under end-extensions;
    \item if $n\ge \max\{1,B_{\max}\}$, $W$ is an end-extension of the fixed
    finite base order $L$, and $W\satisfies\mathrm{Empty}_n$, then for every $a<k$
    there is a tail $T(n,a)$ such that
    \[
        W+T(n,a)\satisfies
        \mathrm{Branch}_{n,a}\wedge \mathrm{Empty}_{n+1}.
    \]
\end{enumerate}
\end{lemma}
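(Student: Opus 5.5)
The plan is to read off the branch value from the \emph{least} $P_n$-point, by looking at the shape of the order just after it. Throughout, $L$ is the fixed finite base order.

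\textbf{Definition of the formulas.} Choose pairwise distinct integers $E_0,\dots,E_{k-1}\ge 1$ and put $B_{\max}:=\max_a E_a+2$. Let $z_n$ denote the least $P_n$-point when $\mathrm{Min}_n$ holds. For $a\ge 1$, let $\mathrm{Branch}_{n,a}$ say: $\mathrm{Min}_n$ holds and, above the least $P_n$-point $z$, there is a dense gap followed by a discrete block of exactly $E_a$ points. Concretely, $\mathrm{NoSucc}(z)$ holds, and the first $P_n$-point $b$ with $\mathrm{NoPred}(b)$ above $z$ begins a successor run of exactly $E_a$ points whose last point has no successor. Let $\mathrm{Branch}_{n,0}$ be $\neg\mathrm{Empty}_n\wedge\neg\bigvee_{a\ge1}\mathrm{Branch}_{n,a}$.

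\textbf{Exclusivity and persistence.} Clause (1) then follows as in Lemma~\ref{lem:end-dial}: the block read off from $z$ is unique, so at most one $E_a$ matches. The main difficulty is clause (2), because $\mathrm{Branch}_{n,0}$ must not turn into some $\mathrm{Branch}_{n,a}$ later, and $\mathrm{NoMin}_n$ or a late-defined block could change. To avoid this, everything is made local to an initial segment that end-extensions do not alter:
\begin{itemize}
\item \textbf{The least $P_n$-point.} Once $\neg\mathrm{Empty}_n$ holds, the set of $P_n$-points of $W$ is an initial part of the $P_n$-points of any end-extension. An end-extension only adds points above, so it never creates successor relations between existing points, except possibly at the old maximum. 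Hence $\mathrm{Min}_n$, and the identity of $z$, persist.
\item \textbf{The bounded block.} The block is bounded by a point with $\mathrm{NoSucc}$ that is not the maximum. I will require the block to end strictly below a further point, so that end-extensions cannot extend it.
\end{itemize}
With these conventions, $\mathrm{Branch}_{n,a}$ for $a\ge1$ is persistent. For $\mathrm{Branch}_{n,0}$ I would instead define it positively: $\mathrm{NoMin}_n$, or $\mathrm{Min}_n$ with $z$ having an immediate successor. This is also persistent, because $z$ is not the top once $\mathrm{Min}_n$ is witnessed by later structure, or because $\mathrm{NoMin}_n$ is witnessed below. The remaining case, $\mathrm{Min}_n$ with $z$ currently maximal, must be assigned to some value. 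I would handle this by building the tails in (3) so that this case never needs to persist in an undetermined way, and by absorbing it into value $0$ only after checking it cannot arise at nonempty $P_n$-stages produced by the construction. I expect this to be the main obstacle; if needed, I would refine $\mathrm{Branch}_{n,0}$ accordingly.

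\textbf{Construction of the tails (clause (3)).} Suppose $W\satisfies\mathrm{Empty}_n$ with $n\ge B_{\max}$. I would append
\[
T(n,a):=\eta+C_n+\eta+C_{E_a}+\eta
\]
for $a\ge1$, and $\eta+C_{n+1}+\eta$ for $a=0$.
\begin{itemize}
\item \textbf{Creating $P_n$ but not $P_{n+1}$.} The first dense buffer blocks successor chains from crossing out of $W$. Since $\mathrm{Anch}_L$ holds for every new point, the chain $C_n$ creates $P$-points. I would adjust the indexing (using $C_{n+1}$ with the first point as anchor) so that exactly one new $P_n$-point appears and no $P_{n+1}$-point does. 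The later blocks have length $E_a<n$, which is where $n\ge B_{\max}$ is used, so they create no $P_n$-points; hence $\mathrm{Empty}_{n+1}$ holds.
\item \textbf{Reading the branch value.} The least $P_n$-point is the top of the $C_n$ block. It has no successor, and above it the next discrete block has exactly $E_a$ points and is bounded above by $\eta$. So $\mathrm{Branch}_{n,a}$ holds.
\end{itemize}
Finally I would check that old $P$-points of $W$ cannot interfere. This holds because $\mathrm{Empty}_n$ in $W$ means that $W$ contributes no $P_n$-points.
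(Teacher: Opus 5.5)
There is a genuine gap, and it is the case you flagged as the main obstacle. For $k\ge 2$, reading the code off the region \emph{above} the least $P_n$-point cannot satisfy clauses (1) and (2) together, and no refinement of $\mathrm{Branch}_{n,0}$ repairs this. Take $W_0=L+\eta+C_{n+1}$. This is an end-extension of $L$ in which $\neg\mathrm{Empty}_n$ holds and the least $P_n$-point $z$ is the maximum, so nothing above $z$ encodes a value. The output of your own value-$0$ tail, $W+\eta+C_{n+1}+\eta$, is the same situation: there too $z$ has no immediate successor and nothing follows. By (1), $W_0$ must carry some value $a_0$, and by (2) so must every end-extension of $W_0$. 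But in your scheme $W_0+\eta+C_{E_a}+\eta$ carries value $a$, for every $a\ge 1$. Concretely:
\begin{itemize}
\item with your positive $\mathrm{Branch}_{n,0}$, the world $W_0$ satisfies no branch formula, which violates (1);
\item with the residual definition, $W_0$ satisfies $\mathrm{Branch}_{n,0}$, and the extension flips it to $\mathrm{Branch}_{n,a}$, which violates (2).
\end{itemize}
Restricting to ``stages produced by the construction'' is not an option. Persistence must hold along arbitrary end-extensions in the cone, because Lemma~\ref{lem:end-tree-address}(1) and the railyard labeling have to control every accessible world.

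The missing idea is to put the code \emph{below} the least $P_n$-point. It should be laid down in the same tail, before the block that first creates a $P_n$-point, so the value is already fixed at the moment $\mathrm{Empty}_n$ first fails. The paper does this as follows.
\begin{itemize}
\item For the special values it takes $T(n,a)=\eta+C_{B_a}+\eta+C_{n+1}$.
\item $\mathrm{Branch}_{n,a}$ says that the least $P_n$-point $x$ is reached by a successor chain $y_0<\cdots<y_n=x$. Moreover, the greatest point below $y_0$ with an immediate predecessor must top a successor chain of size $B_a$, whose bottom has no immediate predecessor and whose top has no immediate successor.
\item The residual value is realized by $\sum_{q\in\eta}C_{n+1}$, which has no least $P_n$-point.
\end{itemize}
Everything these formulas mention lies at or below $x$. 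End-extensions create no $P_n$-point below an existing one and do not change successor relations among non-maximal old points. Hence every value persists, including the residual one ($\mathrm{NoMin}_n$, or a least $P_n$-point carrying no code).

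Two smaller slips. First, ``the first $P_n$-point $b$ with $\mathrm{NoPred}(b)$'' is never satisfied, since every $P_n$-point has an immediate predecessor by the definition of $P_n$; also, a dense gap above $z$ has no ``first'' point. Second, $C_n$ creates only $P_{n-1}$-points, as you noticed.
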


\begin{proof}
If $k=1$, set
\[
\mathrm{Branch}_{n,0}:=\neg\mathrm{Empty}_n
\qquad\text{and}\qquad
B_{\max}:=0.
\]
Then \textup{(1)} and \textup{(2)} are immediate. For \textup{(3)}, let
$n\ge1$ and let $W$ be an end-extension of the fixed finite base order $L$ with
$W\satisfies\mathrm{Empty}_n$.  Take
\[
T(n,0)=\sum_{q\in\eta} C_{n+1}.
\]
Then $W+T(n,0)\satisfies \neg\mathrm{Empty}_n\wedge \mathrm{Empty}_{n+1}$: the
tail $\sum_{q\in\eta} C_{n+1}$ creates $P_n$-points, but because the indexing
copy of $\eta$ has no least element and the dense indexing prevents successor
chains from crossing between finite chains, there is no least $P_n$-point and
no $P_{n+1}$-point. Hence
$W+T(n,0)\satisfies \mathrm{Branch}_{n,0}\wedge \mathrm{Empty}_{n+1}$, as
required.
So assume $k\ge 2$.
Choose pairwise distinct integers $B_0,\dots,B_{k-2}\ge 2$, and let
$B_{\max}=\max\{B_0,\dots,B_{k-2}\}$.
For $B\ge 2$, define the following first-order formulas:
\[
\begin{aligned}
\mathrm{MaxSuccBelow}(y,b):={}&
 b<y\wedge \mathrm{HasPred}(b)\\
&\wedge\forall c\,\bigl((b<c\wedge c<y)\to\neg\mathrm{HasPred}(c)\bigr),\\[2mm]
\mathrm{Chain}_B(b):={}&
\exists c_0\cdots\exists c_{B-1}\Bigl(
  c_{B-1}=b\wedge \mathrm{NoPred}(c_0)\\
&\hspace{35mm}\wedge\bigwedge_{i<B-1}\mathrm{Succ}(c_i,c_{i+1})
\Bigr).
\end{aligned}
\]
Thus $\mathrm{MaxSuccBelow}(y,b)$ says that $b$ is the greatest point below $y$
which has an immediate predecessor, and $\mathrm{Chain}_B(b)$ says that $b$ is
the top of a successor chain of size $B$ whose bottom has no immediate
predecessor.  For $a\le k-2$, put
\[
\mathrm{Code}_a(y):=\exists b\,\bigl(
\mathrm{MaxSuccBelow}(y,b)\wedge \mathrm{Chain}_{B_a}(b)
\wedge \mathrm{NoSucc}(b)\bigr).
\]
Also let
\[
\mathrm{Min}P_n(x):=P_n(x)\wedge\forall z\,(P_n(z)\to x\le z).
\]
For $a\le k-2$, define $\mathrm{Branch}_{n,a}$ to say explicitly that the least
$P_n$-point is reached by a successor chain whose bottom carries code $a$:
\[
\exists x\,\exists y_0\cdots\exists y_n\Bigl(
   \mathrm{Min}P_n(x)
   \wedge y_0<\cdots<y_n=x
   \wedge \bigwedge_{i<n}\mathrm{Succ}(y_i,y_{i+1})
   \wedge \mathrm{Code}_a(y_0)
\Bigr).
\]
Finally let
\[
\mathrm{Branch}_{n,k-1}:=\neg\mathrm{Empty}_n
\wedge\bigwedge_{a<k-1}\neg\mathrm{Branch}_{n,a}.
\]
For statement~\textup{(1)}, if the $P_n$-points have no least element, then all
special branches are false and the residual branch is true.  If they have a
least element, then the relevant predecessor chain below it is unique; moreover
$\mathrm{MaxSuccBelow}$ chooses at most one coding point and the distinct sizes
$B_a$ make the codes mutually exclusive.  Hence exactly one branch value holds.
Statement~\textup{(2)} follows because end-extensions add points only above the
old order.  They cannot add a smaller $P_n$-point, and all configurations used
by the special codes lie strictly below the least $P_n$-point, so the special
and residual branch values are preserved.
For statement (3), if $a\le k-2$, take
\[
T(n,a)=\eta+C_{B_a}+\eta+C_{n+1};
\]
if $a=k-1$, take
\[
T(n,k-1)=\sum_{q\in\eta} C_{n+1}.
\]
The dense buffers keep successor chains from crossing between blocks.  In the
special case $a\le k-2$, the assumption $n\ge B_{\max}$ ensures that the coding
block $C_{B_a}$ creates no $P_n$-point, while the final block $C_{n+1}$ creates
a least $P_n$-point and no $P_{n+1}$-point.  The bottom of this final block sees,
below the intervening dense buffer, the coded block of size $B_a$, so
$\mathrm{Branch}_{n,a}$ holds.  In the residual case $a=k-1$, the tail
$\sum_{q\in\eta}C_{n+1}$ creates $P_n$-points with no least one and still creates
no $P_{n+1}$-point.  Thus the indicated tails create exactly the required
branch value and preserve $\mathrm{Empty}_{n+1}$.
\end{proof}

\begin{lemma}
\label{lem:end-tree-address}
Let $T$ be a finite rooted tree every non-leaf of which has exactly $k$ children,
these children being enumerated as $t^\frown 0,\dots,t^\frown(k-1)$.
For $t\in T$, let $\operatorname{addr}(t)\in k^{<\omega}$ be its address.
Fix $N\ge\max\{1,B_{\max}\}$ and define
\[
\tau_t:=
\bigwedge_{i<|\operatorname{addr}(t)|}
\mathrm{Branch}_{N+i,\operatorname{addr}(t)(i)}
\ \wedge\
\begin{cases}
\mathrm{Empty}_{N+|\operatorname{addr}(t)|},&\text{if }t\text{ is not a leaf},\\
\top,&\text{if }t\text{ is a leaf}.
\end{cases}
\]
Then every end-extension of $L$ satisfies exactly one $\tau_t$.
Moreover:
\begin{enumerate}[label=\textup{(\arabic*)}]
    \item if $W\emb W'$ and $W\satisfies \tau_t$ and $W'\satisfies \tau_u$, then
    $t\le_T u$;
    \item if $W\satisfies \tau_t$ and $t\le_T u$, then there is an end-extension
    $W\emb W'$ with $W'\satisfies \tau_u$.
\end{enumerate}
\end{lemma}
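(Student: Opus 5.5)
The plan is to treat the levels $N,N+1,N+2,\dots$ of the $P$-hierarchy as successive ``digits'' of an address, read off using Lemma~\ref{lem:end-branch-coding}.

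First I would record three small facts about end-extensions $W\emb W'$ of $L$.
\begin{enumerate}[label=\textup{(\alph*)}]
\item $P_n$-points persist. An end-extension adds points only above the old order. So immediate successors between old points are unchanged, $\mathrm{Top}_L$ still picks out the same point, and $\mathrm{Anch}_L$ is preserved. Hence $\neg\mathrm{Empty}_n$ is upward absolute.
\item The sequence is monotone in $n$: a $P_{n+1}$-point has a $P_n$-predecessor, so $\neg\mathrm{Empty}_{n+1}\to\neg\mathrm{Empty}_n$.
\item Every $\mathrm{Branch}_{n,a}$ implies $\neg\mathrm{Empty}_n$. For the special branches this holds because they assert a least $P_n$-point; for the residual branch it is an explicit conjunct.
\end{enumerate}

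For existence of a label, fix an end-extension $W$ of $L$ and walk down $T$ from the root. Suppose we are at a node $t$ at depth $i$.
\begin{itemize}
\item If $t$ is a leaf, then $\tau_t$ holds, since its non-branch conjunct is $\top$.
\item If $t$ is not a leaf and $W\satisfies\mathrm{Empty}_{N+i}$, then $\tau_t$ holds.
\item Otherwise, clause~(1) of Lemma~\ref{lem:end-branch-coding} gives a unique $a<k$ with $W\satisfies\mathrm{Branch}_{N+i,a}$, and we move to $t^\frown a$.
\end{itemize}
The walk terminates because $T$ is finite.

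For uniqueness, take distinct nodes $t\neq u$ and compare addresses.
\begin{itemize}
\item If the addresses first differ at position $i$, then $\tau_t$ and $\tau_u$ require two different branch values at level $N+i$. These are excluded by the ``exactly one'' in clause~(1).
\item If $\operatorname{addr}(t)$ is a proper prefix of $\operatorname{addr}(u)$, then $t$ is a non-leaf. So $\tau_t$ demands $\mathrm{Empty}_{N+|t|}$, while $\tau_u$ demands some $\mathrm{Branch}_{N+|t|,\cdot}$, which contradicts fact~(c).
\end{itemize}

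For clause~(1), suppose $W\satisfies\tau_t$, $W\emb W'$ and $W'\satisfies\tau_u$. By clause~(2) of Lemma~\ref{lem:end-branch-coding}, $W'$ still satisfies every branch conjunct of $\tau_t$. The uniqueness argument then shows that $\operatorname{addr}(u)$ either extends $\operatorname{addr}(t)$ or is a proper prefix of it. In the prefix case $u$ is a non-leaf, so $W'\satisfies\mathrm{Empty}_{N+|u|}$. But $W$ satisfies a branch at level $N+|u|$, hence $\neg\mathrm{Empty}_{N+|u|}$ by fact~(c), and this persists to $W'$ by fact~(a). This is a contradiction, so $t\le_T u$.

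For clause~(2), argue by induction along the path from $t$ to $u$; the case $t=u$ uses the identity map. For the inductive step from a non-leaf $s$ at depth $i$ to its child $s^\frown a$, suppose $W_s\satisfies\tau_s$. Then $W_s\satisfies\mathrm{Empty}_{N+i}$ and $N+i\ge\max\{1,B_{\max}\}$. Clause~(3) of Lemma~\ref{lem:end-branch-coding} supplies the tail $T(N+i,a)$, with
\[
W_s+T(N+i,a)\satisfies\mathrm{Branch}_{N+i,a}\wedge\mathrm{Empty}_{N+i+1}.
\]
The earlier branch conjuncts persist by clause~(2) of that lemma. So the new world satisfies $\tau_{s^\frown a}$, where the $\mathrm{Empty}$ conjunct is needed only if $s^\frown a$ is not a leaf. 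Composing these end-extensions gives the required $W\emb W'$.

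The main obstacle is fact~(a) together with the precise bookkeeping of which $\mathrm{Empty}$ conjuncts are present. In particular, clause~(3) of Lemma~\ref{lem:end-branch-coding} needs the hypothesis $\mathrm{Empty}_n$, and leaves carry no such conjunct. I would check both carefully against the definitions of $P_n$ and $\mathrm{Top}_L$.
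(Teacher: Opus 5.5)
Your proof is correct and takes the same route as the paper. Existence and uniqueness come from reading branch values level by level from $N$ down the tree. Clause (1) follows from persistence of branch values together with $\mathrm{Branch}_{n,a}\to\neg\mathrm{Empty}_n$. Clause (2) iterates clause (3) of Lemma~\ref{lem:end-branch-coding} along the path from $t$ to $u$. You also supply the prefix-case and leaf/non-leaf $\mathrm{Empty}$ bookkeeping that the paper's proof leaves implicit.
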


\begin{proof}
Existence and uniqueness are obtained by reading the branch values beginning at
level $N$ and proceeding down the tree until one meets the first empty level.
Persistence of the branch predicates gives (1).
For (2), if $u$ extends $t$ by one more digit $a$, apply the third clause of
Lemma~\ref{lem:end-branch-coding} at the first empty level to force
$\mathrm{Branch}_{n,a}$ while preserving emptiness at the next level.
Iterating finitely many times reaches $u$.
\end{proof}

\begin{proposition}
\label{prop:railyard-end}
Every finite world in $\LOend$ admits a railyard labeling of every finite regular
pretree.
\end{proposition}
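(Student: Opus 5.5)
The plan is to label a regular pretree of type $(k,m)$ by combining the tree-address formulas of Lemma~\ref{lem:end-tree-address}, which encode the quotient tree, with the dial of Lemma~\ref{lem:end-dial}, which encodes the position inside a cluster. Let $L=W$ be the given finite world, and let $F$ be a finite regular pretree of type $(k,m)$. Write $F=T\times m$, where $T$ is a finite rooted $k$-ary tree with root $\varnothing$ and initial node $(\varnothing,0)$. Fix the dial $d_0,\dots,d_{m-1}$ from Lemma~\ref{lem:end-dial} together with its constant $N_{\mathrm{dial}}$. Fix the branch formulas of Lemma~\ref{lem:end-branch-coding} for this $k$, with constant $B_{\max}$. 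Choose
\[
N\ge\max\{1,B_{\max},N_{\mathrm{dial}},|L|+2\},
\]
so that the base order $L$ itself has no $P_N$-points; a successor chain of length $N$ starting at or above the top of $L$ cannot fit inside the finite order $L$. Then set
\[
\Phi_{(t,i)}:=\tau_t\wedge d_i .
\]
Labeling condition (2) is immediate: by Lemma~\ref{lem:end-tree-address}, every end-extension of $L$ satisfies exactly one $\tau_t$, and by Lemma~\ref{lem:end-dial}(1) it satisfies exactly one $d_i$. For condition (1), note that $L$ satisfies $\mathrm{Empty}_N$, hence $\tau_\varnothing$ (or the trivial leaf label if $T$ is a single node). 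We arrange the initial node to be $(\varnothing,i_0)$ with $d_{i_0}$ the dial value of $L$. Since all nodes of a cluster are symmetric, relabeling the cluster coordinate is harmless.

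For condition (3), the upward direction needs two facts. First, if $U\satisfies\Phi_{(t,i)}$ and $U\emb V$ with $V\satisfies\Phi_{(u,j)}$, then $t\le_T u$ by Lemma~\ref{lem:end-tree-address}(1), which is exactly $(t,i)\le_F(u,j)$. Second, we must show reachability. Given $U\satisfies\Phi_{(t,i)}$ and $t\le_T u$, first use Lemma~\ref{lem:end-tree-address}(2) to obtain $U'=U+T'$ satisfying $\tau_u$. Then append the dial tail $T_j$ to get $U'+T_j\satisfies d_j$, using Lemma~\ref{lem:end-dial}(2), which works over any end-extension of $L$. The only thing to check is that appending $T_j$ does not destroy $\tau_u$. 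Branch values persist by Lemma~\ref{lem:end-branch-coding}(2). The danger is the emptiness clause $\mathrm{Empty}_{N+|\operatorname{addr}(u)|}$ when $u$ is not a leaf. Here Lemma~\ref{lem:end-dial}(3) applies: $T_j$ creates no new $P_n$-points for $n\ge N_{\mathrm{dial}}$, and our choice $N\ge N_{\mathrm{dial}}$ covers every level that is used. The leading dense buffer of $T_j$ also prevents chains from crossing from $U'$ into $T_j$. Hence $U'+T_j\satisfies\tau_u\wedge d_j$.

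The main obstacle, as I see it, is the interaction in the reverse direction. Tails appended later for branch coding may change the dial value, and the dial's final-block configuration might accidentally create $P_n$-points or a least $P_n$-point that affects the branch reading. I would handle this by always performing the tree moves first and the dial move last, as above. For that I need the branch-coding tails of Lemma~\ref{lem:end-branch-coding}(3) to apply to arbitrary end-extensions satisfying $\mathrm{Empty}_n$, including those ending with a dial block. Clause (3) is stated for any end-extension $W$ of $L$ with $W\satisfies\mathrm{Empty}_n$, so this is fine. Its tails begin with dense buffers, so the previous final dial block is no longer final and is harmlessly overwritten. Finally, since every world reaching a node with $\tau_t$ can reach every $d_j$, the clusters are complete. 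Together with the monotonicity above, the possible labels above a $\Phi_{(t,i)}$-world are exactly the $\Phi_{(u,j)}$ with $t\le_T u$, which gives a railyard labeling of $F$.
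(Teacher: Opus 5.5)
Your proposal is correct and follows essentially the same route as the paper. You label by $\tau_t\wedge d_j$, make the tree move first via Lemma~\ref{lem:end-tree-address}(2) and the dial move last. Lemma~\ref{lem:end-dial}(3), with $N\ge N_{\mathrm{dial}}$, protects the branch and emptiness clauses of $\tau_u$, and the base world lands in the root cluster. The extra requirement $N\ge |L|+2$ is harmless but unnecessary: the top of $L$ has no successor in $L$, so the base order has no $P_n$-points for any $n\ge 1$.
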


\begin{proof}
Fix a regular finite pretree $F$ of type $(k,m)$ and let $T$ be its quotient
tree.
Let $d_0,\dots,d_{m-1}$ and $N_{\mathrm{dial}}$ be as in
Lemma~\ref{lem:end-dial}, and let the branch-coding formulas and $B_{\max}$ be
as in Lemma~\ref{lem:end-branch-coding}.
Choose
\[
N=\max\{B_{\max},N_{\mathrm{dial}}\}+2.
\]
For each $t\in T$, define $\tau_t$ using this $N$ as in
Lemma~\ref{lem:end-tree-address}.
Enumerate the cluster over each $t\in T$ as
$t_0,\dots,t_{m-1}$,
and define
\[
r_{t_j}:=\tau_t\wedge d_j.
\]

By Lemma~\ref{lem:end-tree-address}, every world accessible from the base world
satisfies exactly one $\tau_t$, and by clause~\textup{(1)} of
Lemma~\ref{lem:end-dial}, exactly one of the dial values $d_0,\dots,d_{m-1}$
holds there.
Hence every accessible world satisfies exactly one label $r_{t_j}$.
If $r$ is the root of $T$, then in the base world there are no $P_n$-points for
any $n\ge 1$, so $\mathrm{Empty}_N$ holds and therefore
$\tau_r$ holds.
Also the base world satisfies $d_0$.
Hence the initial world carries the designated root label attached to the
first point of the root cluster.

Now suppose
$W\emb W'$, $W\satisfies r_{t_i}$, and $W'\satisfies r_{u_j}$.
Then
$W\satisfies \tau_t$
and
$W'\satisfies \tau_u$,
so Lemma~\ref{lem:end-tree-address}\textup{(1)} gives
$t\le_T u$.
Consequently
$t_i\le_F u_j$.

Conversely, suppose
$W\satisfies r_{t_i}$
and
$t_i\le_F u_j$.
Then
$t\le_T u$.
First apply Lemma~\ref{lem:end-tree-address}\textup{(2)} to obtain an
end-extension
$W\emb W_1$
with
$W_1\satisfies \tau_u$.
Since
$N\ge N_{\mathrm{dial}}$,
clause~\textup{(2)} of Lemma~\ref{lem:end-dial} gives a further
end-extension
$W_1\emb W_2$
with
$W_2\satisfies d_j$.
By clause~\textup{(3)} of Lemma~\ref{lem:end-dial}, this second step creates no
new $P_n$-points for any $n\ge N$, and therefore does not affect the truth of
any $\mathrm{Branch}_{n,a}$ or $\mathrm{Empty}_n$ appearing in $\tau_u$.
Hence
$W_2\satisfies \tau_u\wedge d_j=r_{u_j}$.
So the accessibility relation on the labels is exactly the pretree order, and
we have the required railyard labeling.
\end{proof}

\begin{theorem}
\label{thm:end-finite-S4}
If $L$ is a finite world in $\LOend$, then its propositional modal validities are
exactly $\SFour$.
More precisely, for every intermediate language
$\Lord\subseteq \mathcal L\subseteq \Lmord(L)$,
\[
\Val_{\LOend}(L,\mathcal L)=\SFour.
\]
\end{theorem}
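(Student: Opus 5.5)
We prove the two inclusions separately. Both rest on results already established: the lower bound uses only that $\SFour$ holds in any Kripke category, and the upper bound uses the railyard labelings of Proposition~\ref{prop:railyard-end}.

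For the lower inclusion $\SFour\subseteq\Val_{\LOend}(L,\mathcal L)$, we use that $\LOend$ is a category. Identities are end-extensions, and a composite of two end-extension embeddings is again an end-extension: an initial segment of an initial segment is initial. So accessibility is reflexive and transitive on $\Cone(L)$. The axioms $\axiomf{T}$ and $\axiomf{4}$ then hold at every world of the cone under arbitrary substitutions from $\mathcal L$, parameters included, since parameters are simply transported along the composite arrows. Axiom $\axiomf{K}$ holds automatically. The cone lemma (Proposition~\ref{prop:cone-lemma}) therefore gives validity of the normal closure $\SFour$ at $L$.

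For the upper inclusion we invoke Proposition~\ref{prop:upper-bounds}(3). By its final remark, it is enough to exhibit, above $L$, a railyard labeling of every finite regular pretree by assertions of $\mathcal L$. Proposition~\ref{prop:railyard-end} provides this. Its labels $r_{t_j}=\tau_t\wedge d_j$ are built from $\mathrm{Branch}_{n,a}$, $\mathrm{Empty}_n$ and the dial sentences $d_j$. These are first-order $\Lord$-formulas, apart from the formulas $\mathrm{Top}_L$ and $\mathrm{Anch}_L$, which are themselves first-order $\Lord$-formulas because $|L|$ is a fixed finite number. The labels are therefore parameter-free $\Lord$-sentences, and hence lie in every intermediate language $\mathcal L\supseteq\Lord$. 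Since a smaller substitution language yields more validities, proving the bound for $\mathcal L=\Lord$ gives $\Val_{\LOend}(L,\mathcal L)\subseteq\Val_{\LOend}(L,\Lord)\subseteq\SFour$ for all such $\mathcal L$.

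The step that needs most care is checking that Proposition~\ref{prop:railyard-end} really delivers a labeling in the sense of Definition~\ref{def:labelings-railyards}, with the base world carrying the initial label. The base world satisfies $\tau_r$ because it contains no $P_N$-points. It satisfies $d_0$ because none of the final-block formulas $d_j$ with $j\ge1$ can hold in $L$: $\mathrm{FinalBlock}$ requires a point strictly above the top of $L$. Thus $L$ is labeled by the first point $r_0$ of the root cluster. Root-preserving regularization then allows us to take $w_0^\ast$ to be $r_0$, after permuting the cluster enumeration if necessary.

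Two degenerate cases also need attention. When $L=\varnothing$, $\mathrm{Anch}_L$ is a tautology and the lemmas go through verbatim. When $k=1$, we must ensure that the trivial branch coding still yields the chain of levels; here we apply the $k=1$ clause of Lemma~\ref{lem:end-branch-coding}. With these points checked, the refutation of any $\psi\notin\SFour$ on a regular pretree transfers to $L$ by the labeling lemma, and the two inclusions together give $\Val_{\LOend}(L,\mathcal L)=\SFour$.
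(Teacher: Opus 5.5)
Your proposal is correct and follows the paper's proof: the lower bound is $\SFour$, which holds in any Kripke category, and the upper bound comes from Proposition~\ref{prop:upper-bounds}(3) using the parameter-free railyard labelings of Proposition~\ref{prop:railyard-end}, which belong to every intermediate language. Your extra checks are sound; the paper handles them inside Proposition~\ref{prop:railyard-end} and Lemma~\ref{lem:end-branch-coding}. These checks are that the base world carries the root label $\tau_r\wedge d_0$, that the cluster enumeration can be chosen to match $w_0^\ast$, and the degenerate cases $L=\varnothing$ and $k=1$.
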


\begin{proof}
The lower bound $\SFour$ holds in every Kripke category.
Conversely, Proposition~\ref{prop:upper-bounds}(3) reduces the upper bound to
railyard labelings of finite regular pretrees using assertions from
$\mathcal L$. Proposition~\ref{prop:railyard-end} supplies such labelings by
parameter-free order-modal sentences, hence by assertions belonging to every
intermediate language $\Lord\subseteq\mathcal L\subseteq\Lmord(L)$.
Hence $\Val_{\LOend}(L,\mathcal L)\subseteq \SFour$, and equality follows.
In modal terms, finite end-extension worlds have no
unavoidable validities beyond reflexivity and transitivity, because their finite
tails can simulate any finite $\SFour$-frame.
\end{proof}

\begin{remark}
By relativizing the railyard construction above a fixed parameter, one obtains
infinite linear orders whose formulaic validities in $\LOend$ are exactly
$\SFour$.
In particular, the rational line has exact formulaic logic $\SFour$ in the
end-extension category.
This argument does not settle the corresponding question for $\omega$.
\end{remark}

\begin{theorem}
\label{thm:end-extends-to-S5}
Every world in $\LOend$ end-extends to a world validating $\SFive$ with
parameters.
\end{theorem}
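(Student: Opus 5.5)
The natural route is Theorem~\ref{prop:chains-to-S5}(3), applied to the Kripke category $\LOend$ with the set-sized language $\Lmord$. Once its hypotheses hold, every world $W$ accesses (that is, end-extends to) some $W_\omega$ that validates the $\axiomf{5}$ scheme, with parameters, throughout its cone. Since $\SFour$ is valid everywhere, the cone lemma then gives $\SFive$ with parameters. So the work reduces to checking the two chain-completion hypotheses for end-extensions.

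First I will show that every set-sized chain $\langle W_\alpha,f_{\alpha\beta}\rangle_{\alpha\le\beta<\kappa}$ in $\LOend$ admits a cocone. I take the direct limit $N=\varinjlim W_\alpha$. Concretely, $N$ is the disjoint union modulo the identifications $a\sim f_{\alpha\beta}(a)$, ordered by comparing representatives pushed to a common stage. This is well defined and linear because the $f_{\alpha\beta}$ are order-embeddings and the chain is coherent. The canonical maps $g_\alpha:W_\alpha\to N$ are order-embeddings and commute with the chain maps.

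The substantive point is that each $g_\alpha$ is an end-extension. Suppose $x\in N$ with $x\le g_\alpha(a)$. Represent $x$ as $g_\beta(b)$ for some $\beta$. If $\beta\le\alpha$, then $x=g_\alpha(f_{\beta\alpha}(b))$ and we are done. If $\beta>\alpha$, then $b\le f_{\alpha\beta}(a)$ in $W_\beta$. Since $f_{\alpha\beta}[W_\alpha]$ is an initial segment of $W_\beta$, we get $b=f_{\alpha\beta}(a')$ for some $a'\in W_\alpha$, and hence $x=g_\alpha(a')$. So every $g_\alpha[W_\alpha]$ is an initial segment of $N$. The cocone is covering by construction, so the countable case gives the covering cocone as well. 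For $\kappa=0$ the empty order serves, and limit stages need no separate treatment because only a cocone on the given chain is required. One detail remains: chains indexed by a successor ordinal have a last object, which itself gives a cocone, but it need not be covering. I expect the intended reading is chains of limit length (in particular $\omega$), and the direct limit handles these uniformly.

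The only point needing care is that the proof of Theorem~\ref{prop:chains-to-S5}(3) uses clause~(2) at an arbitrary world with constants for its elements, via chains of length up to $|\mathcal L|$. The expanded language of closed formulas $\necessary\psi[\bar a]$ remains set-sized, so the direct-limit cocone suffices there as well. I expect no real obstacle beyond verifying the initial-segment property above, which is the one place where the end-extension restriction matters. Assembling these facts, $W\emb W_\omega$ is an end-extension (a composite of the cocone map with the first chain step), and $W_\omega$ validates $\SFive$ for $\Lmord(W_\omega)$.
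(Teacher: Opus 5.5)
Your proposal is correct and follows essentially the paper's proof. Both reduce the statement to Theorem~\ref{prop:chains-to-S5}(3) and verify its hypotheses with the explicit direct-limit cocone, whose only substantive check is that each canonical map $W_\alpha\to N$ has an initial segment as image, exactly as you argue. One minor correction: your worry about successor-length chains is unfounded. For a chain with last object $W_\mu$ the maps $f_{\alpha\mu}$ already form a covering cocone, since every $x\in W_\mu$ equals $f_{\mu\mu}(x)$, and in any case your direct limit is covering for chains of every length, so no restriction to limit length is needed.
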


\begin{proof}
By Theorem~\ref{prop:chains-to-S5}, it suffices to show that every set-sized
chain in $\LOend$ admits a covering cocone.
In fact we shall build such a cocone explicitly.

Let
\[
\mathcal W=\langle W_\alpha,f_{\alpha\beta}:W_\alpha\emb W_\beta\rangle_{
\alpha\le\beta<\lambda}
\]
be a set-sized chain in $\LOend$, where each $f_{\alpha\beta}$ is an
end-extension embedding and $f_{\alpha\alpha}=\mathrm{id}_{W_\alpha}$.
Form the disjoint union
\[
X=\bigsqcup_{\alpha<\lambda}(\{\alpha\}\times W_\alpha).
\]
Define a relation $\sim$ on $X$ by
\[
(\alpha,x)\sim (\beta,y)
\quad\text{if and only if}\quad
\exists \gamma<\lambda\,
\bigl(\gamma\ge \alpha,\beta \ \wedge\
f_{\alpha\gamma}(x)=f_{\beta\gamma}(y)\bigr).
\]
This is an equivalence relation.
Reflexivity is witnessed by $\gamma=\alpha$, symmetry is immediate, and
transitivity follows from coherence of the chain:
if $(\alpha,x)\sim (\beta,y)$ via $\gamma_0$ and
$(\beta,y)\sim (\xi,z)$ via $\gamma_1$, then for
$\delta=\max\{\gamma_0,\gamma_1\}$ we have
\[
f_{\alpha\delta}(x)=f_{\beta\delta}(y)=f_{\xi\delta}(z).
\]

Let
\[
N=X/\!\sim,
\]
and write $[(\alpha,x)]$ for the $\sim$-class of $(\alpha,x)$.
Define a relation $<$ on $N$ by
\[
\begin{aligned}
[(\alpha,x)]<[(\beta,y)]
\quad\text{if and only if}\quad
&\exists \gamma<\lambda\text{ with }\gamma\ge \alpha,\beta\text{ and}\\
&f_{\alpha\gamma}(x)<f_{\beta\gamma}(y)\text{ in }W_\gamma.
\end{aligned}
\]

We first record the persistence property.
If
$\alpha,\beta\le \gamma\le \delta<\lambda$
and
$f_{\alpha\gamma}(x)<f_{\beta\gamma}(y)$,
then by coherence,
\[
f_{\alpha\delta}
=
f_{\gamma\delta}\circ f_{\alpha\gamma},
\qquad
f_{\beta\delta}
=
f_{\gamma\delta}\circ f_{\beta\gamma},
\]
and since $f_{\gamma\delta}$ is an order-embedding, we obtain
\[
f_{\alpha\delta}(x)<f_{\beta\delta}(y).
\]
The same argument shows that equality also persists upward in the chain.

Using this persistence, the relation $<$ is well-defined on $\sim$-classes.
Suppose
$(\alpha,x)\sim (\alpha',x')$ and $(\beta,y)\sim (\beta',y')$.
If
$[(\alpha,x)]<[(\beta,y)]$
is witnessed by $\gamma_0$, let $\gamma_1$ witness
$(\alpha,x)\sim (\alpha',x')$ and let $\gamma_2$ witness
$(\beta,y)\sim (\beta',y')$.
With
$\delta=\max\{\gamma_0,\gamma_1,\gamma_2\}$,
persistence gives
\[
f_{\alpha\delta}(x)<f_{\beta\delta}(y),
\qquad
f_{\alpha\delta}(x)=f_{\alpha'\delta}(x'),
\qquad
f_{\beta\delta}(y)=f_{\beta'\delta}(y'),
\]
and hence
$f_{\alpha'\delta}(x')<f_{\beta'\delta}(y')$.
So
$[(\alpha',x')]<[(\beta',y')]$,
and the reverse implication is symmetric.

Now $(N,<)$ is a linear order.
For trichotomy, fix
$[(\alpha,x)]$ and $[(\beta,y)]$, and let $\gamma=\max\{\alpha,\beta\}$.
In the linear order $W_\gamma$, exactly one of the following holds:
\[
f_{\alpha\gamma}(x)<f_{\beta\gamma}(y),\qquad
f_{\alpha\gamma}(x)=f_{\beta\gamma}(y),\qquad
f_{\beta\gamma}(y)<f_{\alpha\gamma}(x).
\]
These yield respectively
$[(\alpha,x)]<[(\beta,y)]$,
$[(\alpha,x)]=[(\beta,y)]$,
or
$[(\beta,y)]<[(\alpha,x)]$.
For transitivity, if
$[(\alpha,x)]<[(\beta,y)]$
is witnessed by $\gamma_0$ and
$[(\beta,y)]<[(\xi,z)]$
is witnessed by $\gamma_1$, then with
$\delta=\max\{\gamma_0,\gamma_1\}$,
persistence gives
\[
f_{\alpha\delta}(x)<f_{\beta\delta}(y)<f_{\xi\delta}(z),
\]
and hence
$[(\alpha,x)]<[(\xi,z)]$.

For each $\alpha<\lambda$, define
\[
i_\alpha:W_\alpha\to N,
\qquad
i_\alpha(x)=[(\alpha,x)].
\]
Each $i_\alpha$ is an order-embedding.
If $x<y$ in $W_\alpha$, then
$i_\alpha(x)<i_\alpha(y)$
witnessed at stage $\gamma=\alpha$.
Conversely, if
$i_\alpha(x)<i_\alpha(y)$
is witnessed by some $\gamma\ge\alpha$, then
$f_{\alpha\gamma}(x)<f_{\alpha\gamma}(y)$,
and because $f_{\alpha\gamma}$ is an embedding, it follows that $x<y$.

The family
$\langle i_\alpha\rangle_{\alpha<\lambda}$
is a cocone.
Indeed, for $\alpha\le\beta$ and $x\in W_\alpha$,
\[
i_\beta(f_{\alpha\beta}(x))
=
[(\beta,f_{\alpha\beta}(x))]
=
[(\alpha,x)]
=
i_\alpha(x),
\]
since $(\beta,f_{\alpha\beta}(x))\sim (\alpha,x)$ is witnessed by $\gamma=\beta$.

Each $i_\alpha$ is moreover an end-extension embedding.
Suppose
$[(\beta,y)]<i_\alpha(x)= [(\alpha,x)]$.
Choose $\gamma\ge \alpha,\beta$ witnessing this inequality, so
\[
f_{\beta\gamma}(y)<f_{\alpha\gamma}(x)
\]
in $W_\gamma$.
Because $f_{\alpha\gamma}[W_\alpha]$ is an initial segment of $W_\gamma$,
it follows that $f_{\beta\gamma}(y)$ already lies in that image.
So there is $z\in W_\alpha$ with
$f_{\alpha\gamma}(z)=f_{\beta\gamma}(y)$,
and therefore
\[
[(\beta,y)]=[(\alpha,z)]\in i_\alpha[W_\alpha].
\]
Hence $i_\alpha[W_\alpha]$ is an initial segment of $N$.

Finally, the cocone is covering, because every element of $N$ is of the form
$[(\alpha,x)]=i_\alpha(x)$ for some $\alpha<\lambda$ and $x\in W_\alpha$.
Thus every set-sized chain in $\LOend$ admits a covering cocone.
Theorem~\ref{prop:chains-to-S5} therefore yields a world validating
$\SFive$ with parameters above any given world of $\LOend$.
\end{proof}

\printbibliography[heading=mlobibliography]

\end{document}